\documentclass[11pt]{amsart}
\usepackage[toc,page]{appendix}
\usepackage[utf8x]{inputenc}

\usepackage{amsmath, amsthm, amsfonts, amssymb}
\usepackage{epsfig, enumerate}
\usepackage{color}

\usepackage{hyperref}

\textheight 21.1truecm
\textwidth 15truecm
\oddsidemargin .5truecm
\evensidemargin .5truecm
\topmargin 0cm

\newtheorem{maintheorem}{Theorem} 
\newtheorem{theorem}{Theorem}
\newtheorem{lemma}[theorem]{Lemma}
\newtheorem{coro}[theorem]{Corollary}
\newtheorem{prop}[theorem]{Proposition}

\newtheorem*{assumptions*}{Assumptions}

\newtheorem*{rem*}{Remark}

\theoremstyle{remark}
\newtheorem{remark}[theorem]{Remark}
\newtheorem*{remark*}{Remark}

\theoremstyle{definition}


\newcommand{\W}{{\mathbf W}}






\newcommand{\NN}{{\mathbb N}}

\newcommand{\QQ}{{\mathbb Q}}
\newcommand{\RR}{{\mathbb R}}

\newcommand{\ZZ}{{\mathbb Z}}









\newcommand{\be}[1]{\begin{equation} \label{#1} }
\newcommand{\ee}{\end{equation}}
\newcommand{\beq}{\begin{equation}}

\def \Diff{{\rm Diff}}
\def \al{{\alpha}}

\def \W{{\mathcal W}}

\numberwithin{theorem}{section}
\numberwithin{equation}{section}

\author{Danijela Damjanovi\'c}

\address[Damjanovi\'c]{Department of mathematics, Kungliga Tekniska högskolan, Lindstedtsvägen 25, SE-100 44 Stockholm, Sweden.} 
\email{ddam@kth.se}

\author{Disheng Xu}

\address[Xu]{Department of mathematics, Kungliga Tekniska högskolan, Lindstedtsvägen 25, SE-100 44 Stockholm, Sweden.}
\email{dishengxu1989@gmail.com}

\subjclass[2010]{Primary 37C15, 37C85, 37D20}  
\keywords{Anosov actions, partially hyperbolic diffeomorphisms, cocycle rigidity, invariant foliations, totally non-symplectic, maximal Cartan action }

\begin{document}

\title[Cocycle rigidity]{Diffeomorphism group valued cocycles over higher rank abelian Anosov actions}

\date{\today}
\maketitle

\begin{abstract} 

We prove that every smooth diffeomorphism group valued cocycle over certain $\ZZ^k$ Anosov actions on tori (and more generally on infranilmanifolds), is a  smooth coboundary on a finite cover, if the cocycle is center bunched and trivial at a fixed point. For smooth cocycles which are not trivial at a fixed point, we have smooth reduction of cocycles to constant ones, when lifted to the universal cover. These results on cocycle trivialisation apply, via the existing global rigidity results, to maximal Cartan  $\ZZ^k$ ($k\ge 3$) actions by Anosov diffeomorphisms (with at least one transitive), on any compact smooth manifold. This is the first rigidity result for cocycles over $\ZZ^k$ actions  with values in diffeomorphism groups which does not require any restrictions on the smallness of the cocycle, nor on the diffeomorphism group.
\end{abstract}
 
\tableofcontents
\section{Introduction} 
There has been a large body of work studying smooth cohomology over Anosov diffeomorphisms and flows, since the celebrated work of Livsic \cite{Livsic1}, \cite{Livsic2}, on real valued cocycles where vanishing of periodic orbit obstructions was proven sufficient for (smooth) cocycle trivialisation. Cocycles taking values in other groups have been studied extensively since the work of Livsic, markedly Livsic theorem for matrix cocycles was proved in \cite{Kalinin}. Otherwise, for cocycles taking values in more general groups, such as diffeomorphism groups, there are results for cocycles close to identity \cite{NTnonabelian2} or for improving regularity of the solution to a cohomological equation \cite{NT98} \cite{LW}. Recently, there has been work done in the direction of proving  Livsic theorem for non-small  cocycles taking values in diffeomorphism groups of manifolds of small dimension \cite{KP}.

For Anosov actions of larger abelian groups it was discovered in \cite{KatSpa} that certain irreducibility criterion on the action implies that obstructions for trivialisation of real valued cocycles for individual action elements vanish for cocycles over the action. As a consequence it was obtained that the first smooth cohomology over  such actions is almost trivial, that is: it reduces to constant cocycles. This property was labeled \emph{cocycle rigidity} and it was crucial in  proving perturbative rigidity results for such actions. Cocycles over abelian Anosov and partially hyperbolic actions, taking values in compact Lie groups, and small cocycles taking values in more general Lie groups, have been studied extensively as well, and smooth {cocycle rigidity}, or a classification, was obtained in many cases \cite{KNT}, \cite{NTnonabelian1}, \cite{NTnonabelian2}, \cite{DKpc}, \cite{KN}, \cite{BN}.

In this paper we are interested in cocycles over abelian Anosov actions, taking values in the group of smooth diffeomorphisms $\Diff(N)$ of a compact smooth manifold $N$. Our main result is a Livsic type theorem, which is in the same time a rigidity statement, for algebraic Anosov actions on infranilmanifolds, under certain irreducibility assumptions on the action. Namely, we show that any smooth center-bunched cocycle which takes values in $\Diff(N)$, is a smooth coboundary on some finite cover, if it is trivial at some fixed point of the action. Also, similar condition at fixed points of action elements suffices if the action does not have a fixed point. In particular, this result implies vanishing of obvious periodic orbit obstructions for cocycle trivialization for any action element. Equivalently, this means that for the partially hyperbolic extensions built over the given Anosov action via a $\Diff(N)$ valued cocycle we have:  if the extension  pointwise fixes  one fiber, then the extension reduces to a product action. We note that the corresponding  local statement for  $\Diff(N)$ valued cocycles which are \emph{close to the identity}, with the same condition on fixing a fiber, appears in \cite[Theorem 3.1]{NT01}, where it is used for obtaining a local rigidity result for perturbations of certain property (T) group actions. 

As a corollary, due to global rigidity result of Kalinin and Spatzier \cite{KSp}, this kind of rigidity in cohomology holds for any maximal Cartan $\ZZ^k$ ($k\ge 2$) action  on a smooth compact manifold, if all action elements are Anosov and at least one is transitive. We remark that in previous work on diffeomorphism group valued cocycles, either localization hypothesis or all periodic data  was needed,  while here we only need the natural assumption on center-bunching and data on a finite set. This is the first cocycle rigidity result for  $\Diff(N)$ valued cocycles over abelian Anosov  actions, which does not require any restrictions on closeness to identity, or the target diffeomorphism group $\Diff(N)$.

In our approach  we  consider partially hyperbolic extensions via $\Diff(N)$ valued cocycles over Anosov abelian actions. We show that the action-invariant structures (in particular, action-invariant foliations) for the Anosov action lift to the invariant structures for the partially hyperbolic extension and we show their regularity.  The crucial point is proving the existence of a smooth horizontal foliation 
which is uniformly transverse to the fibers $N$, without the smallness assumption on the cocycle. This allows us to use the holonomy map of the horizontal foliation to construct a well defined smooth transfer map on the universal cover from the given cocycle to a constant cocycle. In this case we say the cocycle is \emph{essentially smoothly cohomologous to a constant}. For cocycles which are identity at a fixed point (or at fixed points for action generators), it is essentially ergodicity of the elements of the base Anosov algebraic action, which implies existence of a \emph{finite} cover on which the cocycle is a smooth coboundary.  The main difference between our approach and former work on $\Diff(N)$ valued cocycles close to Id \cite{NT01}, \cite{KNT}, \cite{NTnonabelian1}, is that for cocycles close to Id the extended actions are small perturbations of product actions, which implies  regularity of action-invariant foliations for the extended action. For non-small cocycles the method we use for proving  regularity of these foliations is inspired by our work in  \cite{DX1} on partially hyperbolic actions with compact center foliation. Rather than using leaf conjugacy from \cite{HPS} we use the $C^r$ section theorem in a rather technical way, which can be viewed as an extension of argument in \cite{KS07} to partially hyperbolic case. 

We apply the results in this paper to the classification problem for partially hyperbolic higher rank actions with compact center foliation \cite{DX1}.



\section{Setting and statements}

\color{black}
\subsection{Anosov $\ZZ^k$ actions on infranilmanifolds}

Recall that $f\in \Diff^1(M)$ is called \emph{partially hyperbolic} if there is a $Df$-invariant splitting $TM = E^{s} \oplus E^{c} \oplus E^{u}$ of the tangent bundle of $M$ such that for some $k \geq 1$, any $x \in M$, and any choice of unit vectors $v^{s} \in E^{s}_{x}$, $v^{c} \in E^{c}_{x}$, $v^{u} \in E^{u}_{x}$,
\[
\|Df^{k}(v^{s})\| < 1< \|Df^{k}(v^{u})\|,
\]
\[
\|Df^{k}(v^{s})\| < \|Df^{k}(v^{c})\| < \|Df^{k}(v^{u})\|.
\]
If $E^u$ and $E^s$ are non trivial and $E^c$ is trivial then $f$ is called \emph{Anosov}.

Now we consider a $\ZZ^k-$action $\al$ on a compact manifold $M$ by diffeomorphisms. The action is called \emph{Anosov} if there is an element that acts as an Anosov diffeomorphism. Recall that a compact nilmanifold is the quotient of a simply connected nilpotent Lie group G by a cocompact discrete subgroup $\Gamma$,
and a compact infranilmanifold is a manifold that is finitely covered by a compact nilmanifold. A linear automorphism of a nilmanifold $G/\Gamma$ is a homeomorphism that is the projection of some $\Gamma$-preserving automorphism of $G$. An affne automorphism of $G/\Gamma$ is the composition of a linear automorphism of $G/\Gamma$ and a left translation. An affne automorphism of a compact infranilmanifold is a homeomorphism that lifts to an affne nilmanifold automorphism on a finite cover. All currently known examples of Anosov diffeomorphisms are topologically conjugated to affine automorphisms of infranilmanifolds.

\subsection{Lyapunov distributions and irreducibility conditions}Suppose $\mu$ is an ergodic probability measure for an Anosov $\ZZ^k$ action $\al$ on a compact manifold $M$. By commutativity, the Lyapunov decompositions for individual elements (cf.\cite{Ose}) of $\ZZ^k$ can be refined to a joint $\al-$invariant splitting. By multiplicative ergodic theorem \cite{Ose} there are finitely many linear functionals $\chi$ on $\ZZ^k$, a $\mu$ full measure set $P$, and an $\al$-invariant measurable splitting of the tangent bundle $TM =\oplus E_\chi$
over $P$ such that for all $a\in\ZZ^k$
and $v\in E_\chi$, the
Lyapunov exponent of $v$ is $\chi(a)$,
The splitting $\oplus E_\chi$ is called the \emph{Lyapunov decomposition}, and the linear functionals
$\chi$ are called the \emph{Lyapunov functionals of $\al$}. The hyperplanes $\ker_\chi\subset \RR^k$ are called
the \emph{Lyapunov hyperplanes}, and the connected components of $\RR^k-\cup_\chi \ker_\chi$are called the \emph{Weyl chambers} of $\al$. The elements in the union of the Weyl chambers are called \textit{regular}. 

For any Lyapunov functional $\chi$ the \textit{coarse Lyapunov distribution} is the direct sum of all Lyapunov spaces with Lyapunov functionals positively proportional to $\chi$: $E^\chi:=\oplus E_{\chi'}, \chi'=c\chi, c>0$. In the presence of sufficiently many Anosov elements (an Anosov element in each Weyl chamber) and if the invariant measure is of full support (such a measure always exists if there is a transitive Anosov element in the action) the coarse Lyapunov distributions are intersections of stable distributions for various elements of the action, they are well defined everywhere, H\"older continuous, and tangent to foliations with smooth leaves.  (For more details see Section 2 in \cite{KS} or Section 2.2 in \cite{KSp}). Moreover, for any other action invariant measure of full support, and Anosov elements in each Weyl chamber, the coarse Lyapunov distributions will be the same, as well as the Weyl chamber picture.

The following properties of $\ZZ^k$ actions have been used in a large body of work to describe irreducibility of the action and they will appear in the main theorems of this paper:

- $\al$ is \emph{maximal} if there are exactly $k+1$ coarse Lyapunov exponents which correspond to $k+1$ distinct Lyapunov hyperplanes, and if Lyapunov hyperspaces are in general position (namely, if no Lyapunov hyperspace contains a non-trivial intersection of two other Lyapunov hyperspaces). 
 
- $\al$ is  \emph{totally non-symlpectic} (TNS) if there are no negatively proportional Lyapunov exponents. 

- $\al$  is  \emph{Cartan} if all coarse Lyapunov distributions are one-dimensional.

-  $\al$  is  \emph{resonance-free} with respect to  an invariant ergodic measure $\mu$ if for any Lyapunov functionals $\chi_i$ , $\chi_j$, and $\chi_l$ such that $\chi_i$ is not positive propositional to $\chi_j$, the functional $(\chi_i − \chi_j )$ is not proportional to $\chi_l$. \footnote{The resonance free assumption here is slightly weaker than that in \cite{KS07}.}

- $\al$ is \emph{full} if for every coarse Lyapunov distribution $E_i$, there exists a regular element $a$ such that  $E_i=E^u_a$, and 
 $\al$ has at least two distinct Lyapunov hyperplanes. 
 
 Classical examples of maximal Cartan actions are $\mathbb Z^k$ actions on the torus $\mathbb T^{k+1}$ by toral automorphisms. Maximality implies a special property of Weyl chambers: namely that there is any combination of signatures of Lyapunov functionals among the Weyl chambers, except all positive, and all negative. In particular, for any Lyapunov functional there is a Weyl chamber in which that Lyapunov functional is positive and all others are negative. This is what we labeled a \textit{full} action. It is easy to see that maximality in general implies the action is full. If $\mathbb Z^k$ action has $r$ distinct Lyapunov hyperplanes and is full, in case when $r\ge k$ and the planes are in general position, by counting the Weyl chambers it is easily checked that the action must be maximal, that is $r= k+1$.   Will show in Lemma \ref{properties} that that fullness implies TNS and resonance free. One can construct examples of actions which are full but are not maximal by taking products of maximal Cartan actions for example. One can also construct examples which are full and are not maximal by "complexifying" maximal Cartan actions (see \cite{DX1} for a concrete construction on $\mathbb T^6$).  We remark here that all the properties listed above are properties of the Weyl chambers structure of the action, and therefore do not depend on the invariant measure, except for the resonance free property.

%


\subsection{Regularity}\label{subsection regl}
Suppose $M,N$ are smooth  compact manifolds, a map $f: M\to N$ is called $C^r, r\notin \ZZ, r> 1$ if $f$ is $C^{[r]}$ and the $[r]$th order partial derivatives of $f$ is uniformly H\"older continuous with exponent $r-[r]$. We denote by $\Diff^r(N)$ the group of $C^r$ diffeomorphisms on $N$. A map $h:M\to \Diff^s(N)$ is called $C^r, 1\leq r\leq s$ if $h(\cdot)(\cdot):M\times N\to N$ is a $C^r-$map. 

In this paper we will study the regularity for many objects, for example, foliations, diffeomorphisms, coboundaries, cocycles, etc. An object is called $C^{s+}$ if it is $C^{s+\epsilon}$ for some 
$\epsilon>0$, and $C^{s-}$ if it is $C^{s-\epsilon}$ for any small $\epsilon$. A family of objects are called uniformly $C^{s+}$ if they are uniformly $C^{s+\epsilon}$ for some $\epsilon>0$, and uniformly $C^{s-}$ if they are uniformly $C^{s-\epsilon}$ for $\epsilon$ arbitrary small.

\subsection{Cocycles with values in diffeomorphism groups}\label{subsedction: def ess coho}Suppose $M, N$ are smooth manifolds. Let $\al$ be a $\ZZ^k-$action on $M$. In this section we assume $1\leq r\leq s\leq \infty$. A map $\beta : \ZZ^k\times M\to \Diff^s(N)$ is called a \textit{cocycle} (with values in group $\Diff^s(N)$) over $\al$ if it satisfies:
$$\beta(a + b, x) = \beta(a, \al(b)\cdot x)\beta(b, x), a, b\in \ZZ^k, x\in M.$$
In addition $\beta$ is called $C^r$ if for any $a\in \ZZ^k$, $\beta(a,\cdot): M\to \Diff^s(N), $ is a $C^r$ map. And $\beta$ is a \emph{constant cocycle} if $\beta$ does not depend on the second coordinate. 

For any $C^s-$cocycle $\beta$, we say $\beta$ is \textit{$C^r -$cohomologous to constant} if there is a homomorphism (constant cocycle)
$\beta_0:\ZZ^k\to \Diff^s(N) $ and a $C^r$ map $h:M\to \Diff^s(N)$ such that
$$\beta(a, x) = h(\al(a)\cdot x)\beta_0(a)h(x),x\in M, a\in\ZZ^k $$ 
A $C^s$ cocycle $\beta$ is a $C^r-$\emph{coboundary} if it is $C^r-$cohomologous to the trivial cocycle. We say $\beta$ is \textit{essentially $C^r-$cohomologous to constant} if there is a cover $(p,\hat{M})$ of $M$  such that the lifted cocycle $$\hat{\beta}(\cdot, \cdot):=\beta(\cdot, p(\cdot)):\ZZ^k\times \hat{M}\to \Diff^s(N)$$
over a lift of $\al$, \color{black}  is  $C^r-$cohomologous to constant.

We call $\beta$ trivial at a point $x$ if $\beta(a,x)$ is the identity map in  $\Diff^s(N)$ for any $a\in\ZZ^k$. We will call a cocycle $\beta$ \emph{fixed point trivial} if there exists a set $S$ of generators of $\ZZ^k$,  for any $a\in S$ there is a fixed point $x_a$ of $\al(a)$ such that $\beta(a, x_a)=id$. In particular, if $\al$ has a fixed point $x_0$, then $\beta$ is fixed point trivial if $\beta$ is trivial at $x_0$.
\begin{remark}\label{rema: ano dif has fix pnt}In \cite{Sm}, Smale conjectured that all Anosov diffeomorphisms on connected compact
manifolds have fixed points, this assertion holds on any infranilmanifold, cf. \cite{M},\cite{Ma},\cite{F},\cite{So}, therefore for actions $\al$ which we consider in this paper, every regular element of the action has a fixed point. 
\end{remark}
\begin{remark}\label{rema: comm fix pnt}It is quite common that $\al$ has a fixed point. In fact there is always a subgroup $A\subset \ZZ^k$ of finite index such that $\al|_A$ has a fixed point $x_0\in M$, this property will be used in the subsequent proofs.\footnote{For any regular $a\in \ZZ^k$, for any $p>0$, the set of $p-$periodic points for $\al(a)$ is clearly discrete hence finite. Moreover it is  $\al-$invariant. Then for any $p$ such that $\al(a)$ has non-empty set of $p-$periodic point set $F_{pa}:=\mathrm{Fix(\al(pa))}$, the restriction of $\al$ on $A:=(\#F_{pa})!\ZZ^k$ has a common fixed point $x$.}  
\end{remark}

\subsection{Bunching conditions}\label{subsection: bunching}Suppose $f$ is an Anosov diffeomorphism on a compact manifold $M$ and $E^s_f, E^u_f$ are the stable and unstable bundles of $f$ respectively. Then we have a $\ZZ-$action $\al_f:\ZZ\to \Diff^1(M)$ on $M$ such that $\al_f(n,x)=f^n(x),x\in M$. Consider a cocycle $\beta:\ZZ\times M\to \Diff^1(N)$ over $\al_f$ where $N$ is a compact manifold, $\beta$ is called $r-$bunched for some $r\geq 0$ if there exists $k\geq 1$ such that,
\begin{eqnarray}\label{eqn: bunching condition}
\sup_{x\in M}\|D_xf^k|_{E^u_f}^{-1}\|\cdot \|D\beta(k,x)\| &<&1\\\label{eqn: bunching on center}
\sup_{x\in M}\|D_xf^k|_{E^u_f}^{-1}\|\cdot\| D\beta(k,x) \|\cdot  \|D\beta(k,x)^{-1}\|^r &<&1
\end{eqnarray}
We say that $\beta$ is \emph{center-bunched} if it is $1-$bunched, and \emph{$\infty$-bunched} if it is $r$-bunched for every $r \geq 1$. In particular, if $\dim N=1$ and $\beta$ is $0-$bunched then $\beta$ is center-bunched.
\begin{remark}
When $r=1$, our bunching condition is similar to \emph{$\lambda-$center bunching} assumption in \cite{KN} for actions.  There is a similar bunching condition in the study of single partially hyperbolic diffeomorphism, in particular the case $r = 1$ corresponds to the \emph{center-bunching} condition considered by Burns and Wilkinson in their proof of the ergodicity of accessible, volume-preserving, center-bunched $C^{2}$ partially hyperbolic diffeomorphisms \cite{BW10}. 
\end{remark}



For a cocycle $\beta$ over higher rank abelian Anosov action $\al$ on $M$, $\beta:\ZZ^k \times M\to \Diff^1(N)$, we say $\beta$ is $r-$bunched for some $r\in [0,\infty]$ if in every Weyl chamber of the action $\al$ there is an element $a\in \ZZ^k$ such that the cocycle $\beta:\ZZ \times M\to \Diff^1(N)$ over the $\ZZ-$action generated by $\al(a)$ is $r-$bunched in the sense above.

\subsection{Statements of the main results}Suppose  $M,N$ are compact connected smooth manifolds and $\al$ is a smooth Anosov action of $\ZZ^k$ on $M$. The following are the main results of this paper. Suppose $\beta: \ZZ^k\times M\to \Diff^{\infty}(N) $ is an $C^{\infty}-$cocycle over $\al$ and $r\geq 1$, then we have the following result for actions which are full, which is a condition independent on the invariant measure.

\begin{maintheorem}\label{thm: cond A}
If $\al$ is full and $M$ is an infranilmanifold, then 
\begin{enumerate}
\item $\beta$ is essentially $C^{r}-$cohomologous to constant if $\beta$ is $r-$bunched.
\item There is a finite cover of $M$ (which only depends on $\al$) such that if $\beta$ is center-bunched then $\beta$ is   fixed point trivial if and only if  $\beta$ lifts to a $C^{\infty}-$coboundary.
\end{enumerate} 
\end{maintheorem}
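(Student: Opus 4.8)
The plan is to pass to the partially hyperbolic extension determined by $\beta$, to build — without any smallness hypothesis — a smooth ``horizontal'' foliation of $M\times N$ that is transverse to the fibres and invariant under the extended action, and then to read off the trivialisation from its holonomy.

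First I would set $F_\beta(a)(x,y)=(\al(a)x,\beta(a,x)y)$ on $M\times N$; by the cocycle identity this is a $\ZZ^k$-action whose center foliation $\Ww^c$ is the fibration by the $N$-fibres. For a regular $a$ lying in a Weyl chamber where $\beta$ is $r$-bunched, the inequalities \eqref{eqn: bunching condition}--\eqref{eqn: bunching on center} (together with their analogues over $\al(-a)$) say exactly that $F_\beta(a)$ is partially hyperbolic with $E^c=0\oplus TN$ and $r$-normally hyperbolic along $\Ww^c$, so the $C^r$ section theorem of \cite{HPS} is available over $\al(a)$. Since $\al$ is full, for each coarse Lyapunov functional $\chi$ there is a regular $a_\chi$ with $E^\chi=E^u_{\al(a_\chi)}$ and $\beta$ $r$-bunched over $\al(a_\chi)$ (pass to a positive multiple if needed). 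The section theorem then furnishes, over the unstable foliation $\Ww^\chi=\Ww^u_{\al(a_\chi)}$, a canonical unstable holonomy $H^\chi_{z,w}\in\Diff^s(N)$ — e.g.\ $H^\chi_{z,w}=\lim_{n\to\infty}\beta(-n,w)^{-1}\beta(-n,z)$ — which is uniformly $C^r$ in $(z,w)$ along the leaves and satisfies $H^\chi_{z,z}=\mathrm{id}$, $H^\chi_{z,w}H^\chi_{w,v}=H^\chi_{z,v}$, and, because every $\al(b)$ commutes with $\al(a_\chi)$ and preserves $\Ww^\chi$, the equivariance $\beta(b,z)\,H^\chi_{z,w}=H^\chi_{\al(b)z,\al(b)w}\,\beta(b,w)$ for all $b\in\ZZ^k$. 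Hence $\{(z,H^\chi_{z,x}(y)):z\in\Ww^\chi(x)\}$ defines an $F_\beta$-invariant foliation $\widehat\Ww^\chi$ of $M\times N$ with $C^r$ leaves, covering $\Ww^\chi$ and tangent to a graph over $E^\chi$.

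The step I expect to be the main obstacle is the globalisation. I would work on the universal cover, which — after passing to the finite cover of $M$ that is a nilmanifold $G/\Gamma$ — is $G$, and on which, by fullness and hence TNS and resonance-freeness (Lemma \ref{properties}), the lifted coarse Lyapunov foliations are cosets of subgroups whose successive products exhaust $G$. One must assemble the $\widehat\Ww^\chi$ into a single $F_\beta$-invariant foliation $\widehat\Ww$ of $G\times N$ with leaves that are graphs over $G$ — equivalently, show the $C^r$ distribution $\bigoplus_\chi\widehat E^\chi$ is integrable and uniformly $C^r$. Because the leaf conjugacy of \cite{HPS} is not available without a smallness assumption, I expect to do this by a technical, iterated application of the $C^r$ section theorem simultaneously across all Weyl chambers, the partially hyperbolic analogue of the smoothing argument of \cite{KS07} used in \cite{DX1}. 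Granting $\widehat\Ww$ and normalising at a base point, its holonomy yields $h\colon G\to\Diff^s(N)$ with $\widehat\beta(a,\widetilde x)=h(\widetilde\al(a)\widetilde x)\,\beta_0(a)\,h(\widetilde x)^{-1}$ for a homomorphism $\beta_0\colon\ZZ^k\to\Diff^s(N)$; this gives part (1). When $\beta$ is only center-bunched one first obtains a $C^1$ (indeed continuous) solution and upgrades it to $C^\infty$ by the regularity theory for cohomological equations over Anosov systems (cf.\ \cite{NT98}, \cite{LW}).

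For part (2), the direction ``coboundary on the finite cover $\Rightarrow$ fixed point trivial'' is easy: choose a generating set $S$ of $\ZZ^k$ by regular elements, note each $\al(a)$ ($a\in S$) lifts to an Anosov diffeomorphism of the infranilmanifold $M'$, hence has a fixed point there (Remark \ref{rema: ano dif has fix pnt}), and evaluate the coboundary equation at it. For the converse I would use that $\widehat\beta(a,\cdot)$ is $\Gamma$-periodic and $\Gamma$ preserves the lifted coarse Lyapunov foliations, so the $H^\chi$ are $\Gamma$-invariant and $h(\gamma\widetilde x)=c(\gamma)h(\widetilde x)$ for a homomorphism $c\colon\Gamma\to\Diff^s(N)$ (the holonomy of $\widehat\Ww$ around loops in $M$). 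Fixed point triviality, $\beta(a,x_a)=\mathrm{id}$ at an $\al(a)$-fixed point for $a\in S$, gives $\widehat\beta(a,\widetilde x_a)=\mathrm{id}$ and hence $c\circ\phi_a=c$ for the conjugation $\phi_a=\widetilde\al(a)(\cdot)\widetilde\al(a)^{-1}$ of $\Gamma$; since $a$ is regular, $\phi_a$ is hyperbolic, so $\Gamma/\langle\phi_a(\gamma)\gamma^{-1}:\gamma\in\Gamma\rangle$ is finite and $c$ factors through a finite quotient of $\Gamma$ that depends only on $\al$. Choosing $M'$ to be the finite cover of $M$ killing these quotients over all $a\in S$, the foliation $\widehat\Ww$ descends there with all leaves projecting diffeomorphically, $h$ descends, and $\beta\circ\pi=H\beta_0H^{-1}$ on $M'$; the same relation at the $x_a$ shows $\beta_0(a)$ is conjugate into that finite group, so $\beta_0\colon\ZZ^k\to\Diff^s(N)$ has finite image, which since $\ZZ^k$ is abelian is finite. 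Absorbing the finite-valued constant cocycle $\beta_0$ into one more finite cover — using essential ergodicity of the $\al'(a)$ and a Gottschalk--Hedlund/Livsic argument — makes $\beta$ a genuine $C^\infty$-coboundary on a finite cover of $M$ depending only on $\al$.
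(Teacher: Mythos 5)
Your proposal follows the paper's architecture quite faithfully: you form the extension $\tilde\al$ on $M\times N$, use the bunching inequalities to apply the $C^r$ section theorem of \cite{HPS} (rather than leaf conjugacy), recognize that one must iterate the section theorem across Weyl chambers in the manner of \cite{KS07} plus Journ\'e's lemma to get the horizontal foliation globally (this is exactly Propositions \ref{prop: ej reg}, \ref{prop: tilde ej good along n} and Lemma \ref{lemma: unstabel filtr}), read off the transfer map from the holonomy on the universal cover for part (1), and for part (2) study the homomorphism from $\pi_1(M)$ to $\Diff(N)$ induced by deck transformations and show it factors through a finite quotient using hyperbolicity of $\al(a)_\ast$ (the paper's Lemmas \ref{lemma: repre H def}, \ref{lemma: finite image H}). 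You also correctly identify the regularity bootstrap from $C^{1+}$ to $C^{[s]-}$ via the de la Llave--Windsor parametric Livsic result (Proposition \ref{prop: DLLAW}).

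The one step I would push back on is your last move in part (2). You conclude that the constant $\beta_0\colon\ZZ^k\to\Diff^s(N)$ has finite image and then propose to ``absorb $\beta_0$ into one more finite cover'' via a Gottschalk--Hedlund/Livsic argument. That absorption does not go through as stated: $\beta_0(a)$ is a fixed diffeomorphism of the fibre $N$, and replacing $M$ by a further finite cover changes neither $N$ nor $\beta_0$; a nontrivial finite-image constant cocycle is cohomologous to constant but is not a coboundary over any cover of the base, and invoking Livsic for $\Diff(N)$-valued cocycles here would be circular since that is precisely what the theorem is establishing. What the paper does instead (section \ref{subsection: 3.2 from 6.1}) is to choose the base point to be the common fixed point $x_0$ of a finite-index subgroup $A\subset\ZZ^k$, normalize $h(\hat x_0)=\mathrm{id}$, and then use fixed-point triviality at the $x_a$ together with the $\pi_1$-equivariance $h(\omega\hat x)=H(\omega)h(\hat x)$ to argue that $\beta_0(a)=\mathrm{id}$ for each generator $a\in S$, so $\beta_0$ is genuinely trivial and $\beta$ becomes a coboundary (not merely cohomologous to a finite constant) after passing to the cover killing $\mathrm{span}\{\al(a)_\ast\omega\,\omega^{-1}\}$. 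You should close this gap by arguing $\beta_0=\mathrm{id}$ directly rather than only finite, being careful about the choice of lift of $\hat\al(a)$ (and of the lifts $\hat x_a$) so that the evaluation at the fixed points produces the identity and not merely an element of the finite image of $H$.
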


Due to the existence of a global rigidity result for maximal Cartan actions \cite{KSp}, we obtain the following corollary:

\begin{maintheorem}\label{thm: Cartan}If $\al$ is a maximal Cartan $\ZZ^k$, $k\ge 3$, action on a smooth compact manifold $M$, with all non-trivial elements Anosov, and at least one element transitive, then (1),(2) in Theorem \ref{thm: cond A} hold.
\end{maintheorem}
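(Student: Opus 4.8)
The plan is to reduce to Theorem \ref{thm: cond A} by means of the global rigidity theorem for maximal Cartan actions. By the theorem of Kalinin and Spatzier \cite{KSp}, a maximal Cartan $\ZZ^k$ action with $k\ge 3$, all non-trivial elements Anosov, and at least one element transitive is $C^{\infty}$-conjugate to an affine action on a compact infranilmanifold; let $\phi\colon M\to G/\Gamma$ be such a conjugacy, so that $\al_0:=\phi\circ\al\circ\phi^{-1}$ is an affine (hence smooth) Anosov $\ZZ^k$ action on the infranilmanifold $G/\Gamma$. I would then transport the cocycle: put $\beta_0(a,y):=\beta\bigl(a,\phi^{-1}(y)\bigr)$ for $a\in\ZZ^k$ and $y\in G/\Gamma$. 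Since the fibrewise derivatives of $\beta$ do not see the base point and $\phi^{-1}$ is $C^{\infty}$, $\beta_0$ is a $C^{\infty}$ cocycle over $\al_0$ with values in $\Diff^{\infty}(N)$. Conjugation by $\phi$ identifies the Lyapunov functionals, Lyapunov hyperplanes and Weyl chambers of $\al$ with those of $\al_0$ and preserves the dimensions of the coarse Lyapunov distributions, so $\al_0$ is again maximal Cartan, hence full (a maximal action is full, as noted above). Once one checks that $\beta_0$ inherits the bunching hypothesis, Theorem \ref{thm: cond A} applies to $(\al_0,\beta_0)$ on $G/\Gamma$ and yields statements (1) and (2) for $\beta_0$; transporting them back through $\phi$ gives the theorem.

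The delicate point --- and the one I would expect to require real care --- is the transfer of the bunching conditions \eqref{eqn: bunching condition}--\eqref{eqn: bunching on center}, because a smooth conjugacy is not an isometry and distorts the norms of the base derivatives of $\al(a)^{k}$ on the unstable bundle by the bounded factor $C_\phi:=\|D\phi\|_{\infty}\,\|D\phi^{-1}\|_{\infty}\ge 1$, while leaving the fibrewise factors $\|D\beta(k,x)\|$ and $\|D\beta(k,x)^{-1}\|$ unchanged. To absorb $C_\phi$ I would exploit that $r$-bunching only requires \emph{some} power $k$: if the $\ZZ$-cocycle $\beta$ over $\al(a)$ satisfies \eqref{eqn: bunching condition}--\eqref{eqn: bunching on center} with power $k$, then splitting $D\al(a)^{kj}|_{E^u}$ and the chain $D\beta(kj,x)^{\pm1}$ into $j$ blocks of length $k$ and using submultiplicativity of operator norms along orbits shows that the same expressions computed with power $kj$ are bounded by the $j$-th power of the $k$-bound, hence tend to $0$ as $j\to\infty$. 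Picking $j$ so that $C_\phi$ times this $j$-th power is still $<1$, and then recomputing for $\al_0(a)=\phi\circ\al(a)\circ\phi^{-1}$ with power $kj$, each base factor acquires at most the constant $C_\phi$ and the products remain $<1$. Thus $\beta_0$ is $r$-bunched (respectively center-bunched) in the corresponding Weyl chamber of $\al_0$ whenever $\beta$ is for $\al$.

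Finally I would transport the two conclusions back. Covers of $M$ correspond to covers of $G/\Gamma$ through $\phi$: given a cover $\hat p\colon\widehat{G/\Gamma}\to G/\Gamma$, the map $p:=\phi^{-1}\circ\hat p$ is a cover of $M$, a lift of $\al$ through $p$ is conjugate, via the identity of $\widehat{G/\Gamma}$, to the corresponding lift of $\al_0$ through $\hat p$, and the lifted cocycles $\hat\beta(\cdot,\cdot)=\beta(\cdot,p(\cdot))$ and $\hat\beta_0(\cdot,\cdot)=\beta_0(\cdot,\hat p(\cdot))$ coincide. Hence conclusion (1) for $\beta_0$ gives conclusion (1) for $\beta$. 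For conclusion (2), $\phi$ carries fixed points of $\al(a)$ to fixed points of $\al_0(a)$ and $\beta(a,x_a)=\beta_0(a,\phi(x_a))$, so $\beta$ is fixed point trivial if and only if $\beta_0$ is; and the finite cover of $G/\Gamma$ furnished by Theorem \ref{thm: cond A} --- which depends only on $\al_0$, hence only on $\al$ --- pulls back through $\phi$ to a finite cover of $M$ depending only on $\al$, on which $\beta$ lifts to a $C^{\infty}$-coboundary exactly when $\beta_0$ does on the corresponding cover. Apart from invoking \cite{KSp}, this is a transport-of-structure argument, with the bunching step being the only one that is not purely formal.
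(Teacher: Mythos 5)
Your proof is correct and follows essentially the same route as the paper: invoke the Kalinin--Spatzier global rigidity theorem \cite{KSp} to conjugate $\al$ smoothly to an affine Anosov action on an infranilmanifold, note that maximality implies fullness (Lemma \ref{properties}), and then apply the infranilmanifold theorem. The only substantive addition in your write-up is the explicit verification that the bunching hypotheses transfer under the smooth conjugacy by absorbing the bounded distortion constant $C_\phi$ into a higher power $kj$ via submultiplicativity along orbits; the paper leaves this routine but nontrivial check implicit, and your argument for it is correct.
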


Theorem \ref{thm: cond A} and \ref{thm: Cartan} are the special cases of the following more general result. Suppose $\beta: \ZZ^k\times M\to \Diff^{s}(N) $ is a $C^{s}-$cocycle over $\al$.

\begin{maintheorem}\label{thm: main}Suppose $M$ is an infranilmanifold. If there exists an $\al-$invariant ergodic measure $\mu$ with full support such that $\al$ is TNS and resonance free with respect to $\mu$, then
\begin{enumerate}
\item  $\beta$ is essentially $C^{r}-$cohomologous to constant if $\beta$ is $r-$bunched and $r\geq 1, s>r+1$.
\item There is a finite cover of $M$ (which only depends on $\al$) such that if $s>2,  s\notin \ZZ$ and $\beta$ is center-bunched,  then $\beta$ is   fixed point trivial if and only if  
$\beta$ lifts to a $C^{[s]-}-$coboundary.
\end{enumerate}
\end{maintheorem}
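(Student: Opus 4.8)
The plan is to realize the cocycle $\beta$ geometrically as a partially hyperbolic extension and transfer the rich invariant structure of the base action $\al$ up to that extension. Concretely, form the skew product $F_a : M\times N \to M\times N$, $(x,y)\mapsto (\al(a)x, \beta(a,x)(y))$; for any regular $a$ this is partially hyperbolic over the Anosov base, with center direction tangent to the fibers $\{x\}\times N$, and the $r$-bunching hypothesis is exactly the center-bunching condition that controls the transverse regularity of invariant structures. For each coarse Lyapunov foliation $\Ww^\chi$ of $\al$ (which exists, is H\"older, and has smooth leaves because $\mu$ has full support and, under TNS, the coarse Lyapunov distributions are intersections of stable distributions of suitable elements — see the discussion preceding Lemma \ref{properties}) I want to build an $F$-invariant ``horizontal'' lift $\wh{\Ww}^\chi$ inside $M\times N$ that projects diffeomorphically to $\Ww^\chi$ on each leaf. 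The heart of the matter is to do this for a \emph{horizontal} foliation transverse to the fibers, i.e.\ a $W^{cs}$- or $W^{cu}$-type object that is genuinely a graph over $M$ and not just over stable/unstable leaves; this is where the absence of any smallness assumption on $\beta$ forces real work.

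The construction of these invariant horizontal foliations and their regularity is where I expect to invoke the $C^r$ section theorem (in the spirit of \cite{HPS}, but applied as in \cite{KS07}, \cite{DX1} rather than through leaf conjugacy): one sets up the candidate holonomy/graph maps as a section of an appropriate fiber bundle over $M$ with fiber a space of jets or of $C^r$ maps into $\Diff^s(N)$, shows the graph transform is a fiberwise contraction with the right rate thanks to \eqref{eqn: bunching condition}--\eqref{eqn: bunching on center}, and reads off a $C^r$ (resp.\ $C^{[s]-}$) invariant section. The TNS condition ensures that stable and unstable coarse Lyapunov foliations together span enough directions that the horizontal leaves through a point, obtained by concatenating holonomies along a chain of coarse Lyapunov foliations, are well defined; the \emph{resonance free} condition is what guarantees that this concatenation is path-independent, so that the horizontal distribution integrates to a genuine foliation $\Hh$ of $M\times N$ transverse to the fibers and invariant under the whole action (cf.\ the cycle/closing arguments in \cite{KS07}). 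The regularity bookkeeping — tracking how many derivatives survive each application of the section theorem and each holonomy composition, and matching the loss to the gap $s>r+1$ in part (1) and $s>2$ in part (2) — is tedious but routine once the scheme is fixed.

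Given the smooth transverse horizontal foliation $\Hh$, the holonomy of $\Hh$ along paths in $M$ gives, after passing to the universal cover $\wt M$ (where $\Hh$ trivializes as a product because $\wt M$ is contractible), a $C^r$ map $h:\wt M \to \Diff^s(N)$ with $h(\wt{\al}(a)x)\,\beta(a,x)\,h(x)^{-1}$ independent of $x$, i.e.\ a constant cocycle $\beta_0:\ZZ^k\to\Diff^s(N)$; this is precisely the assertion that $\beta$ is essentially $C^r$-cohomologous to a constant, proving part (1). For part (2), one must descend from $\wt M$ to a finite cover and kill $\beta_0$. Here I would use that $M$ is an infranilmanifold, so there is a finite-index subgroup $A\subset\ZZ^k$ with a common fixed point $x_0$ (Remark \ref{rema: comm fix pnt}), and that regular elements of $\al$ are essentially ergodic in a strong sense (they are affine infranilmanifold automorphisms, hence their powers are ergodic with respect to Haar on an appropriate finite cover). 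Fixed-point triviality at $x_0$ forces $\beta_0(a)=h(x_0)\,\beta(a,x_0)\,h(x_0)^{-1}$ to be conjugate to the identity via a single element $g:=h(x_0)\in\Diff^s(N)$, for all $a$ in a generating set, hence for all $a$; replacing $h$ by $g^{-1}h$ makes $\beta$ a $C^{[s]-}$-coboundary on that finite cover. The converse (a coboundary that is trivial at a fixed point upstairs is fixed point trivial downstairs) is immediate.

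The main obstacle, I believe, is Step two: producing the \emph{horizontal} invariant foliation transverse to the fibers without smallness of $\beta$. For a priori large $\beta$ the skew product is far from a product, so the fibers are not approximately flat and the naive graph-transform over $M$ need not contract; the fix is to build the horizontal leaf as a composition of holonomies of the \emph{coarse Lyapunov} sub-foliations (each of which is well controlled by its own one-sided bunching inequality) and then use resonance-freeness to show the result is independent of the order and path of composition. Getting the regularity of this pieced-together object right — in particular showing it is uniformly $C^{r+}$ so that its holonomy is $C^r$ — is the technically heaviest part, and is exactly the ``rather technical'' use of the $C^r$ section theorem the introduction promises.
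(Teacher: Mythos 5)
Your overall scheme matches the paper's: form the skew product $\tilde\al$ on $M\times N$, build $\tilde\al$-invariant horizontal lifts of the coarse Lyapunov foliations using the $C^r$ section theorem, integrate them to a horizontal foliation $\mathcal W_H$, then read off a transfer map from the holonomy of $\mathcal W_H$ on the universal cover (part (1)) and descend to a finite cover (part (2)). But you have the roles of the two irreducibility hypotheses reversed, and this matters. You say resonance-freeness guarantees path-independence of concatenated holonomies, hence integrability of the horizontal distribution. In the paper integrability is a Frobenius/Lie-bracket argument that uses \emph{TNS}: for a cross-bracket $[X^j,Y^k]$, TNS supplies a regular $a$ with $\chi_j(a)>0$ and $\chi_k(a)>0$, so $\tilde E^j,\tilde E^k\subset\tilde E^u_b$ for a nearby $b\in\mathrm{PH}$, and integrability of $\tilde E^u_b$ closes the bracket (\S\ref{subsection: prf main prop}). \emph{Resonance-freeness} is used in Proposition~\ref{prop: ej reg}, where one needs a generic $2$-plane $P\subset\RR^k$ on which restrictions of non-proportional Lyapunov functionals to $\ker\chi_1\cap P$ are pairwise distinct, so that the iterated applications of the $C^r$ section theorem (Corollary~\ref{coro: coro cr section}) produce domination in the right directions. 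Conflating these leaves you without a correct mechanism for the Frobenius step, which is the crux of ``no smallness of $\beta$.''

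For part (2) you appeal to ``ergodicity with respect to Haar'' to descend to a finite cover and to $g=h(x_0)$ to conjugate $\beta_0$ away, but neither, as stated, produces a \emph{finite} cover or the claimed regularity. The cover to which $\hat{\mathcal W}_H$ descends is the one associated with $\Gamma_0:=\mathrm{span}\{\al(a)_*\omega\cdot\omega^{-1}:a\in\ZZ^k,\omega\in\pi_1(M)\}$ (fixed-point triviality makes the deck holonomy $H:\pi_1(M)\to\Diff(N)$ factor through $\pi_1(M)/\Gamma_0$, Lemma~\ref{lemma: finite image H}(1)); that $\Gamma_0$ has \emph{finite index} is a genuine algebraic fact about Anosov automorphisms of (infra)nilmanifolds, proved via invertibility of $\bar\al(a_0)-\mathrm{id}$ on $\Lambda\otimes\QQ$ in the toral case and, in general, via the Mal'cev upper central series and Manning's non-root-of-unity lemma (Lemmas~\ref{lemma: finite image H}(2), \ref{lemma: MM}). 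Ergodicity of action elements does not substitute for this. Separately, center-bunching is only $1$-bunching, so the section theorem yields only a $C^{1+}$ transfer map $h^*$; the advertised $C^{[s]-}$ regularity is \emph{not} routine bookkeeping but needs the de~la~Llave--Windsor parameter-dependence theorem for cohomological equations over transitive Anosov diffeomorphisms (Proposition~\ref{prop: DLLAW}) to upgrade $h^*$ to a H\"older map into $\Diff^{[s]}(N)$, followed by Lemma~\ref{lemma:holo reg implies foliation reg} to promote $\mathcal W_H^*$ to a $C^{[s]-}$ foliation. Without these two ingredients your part (2) does not close.
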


\begin{remark}Basically $r-$bunching condition (or certain domination condition, or $r-$normal hyperbolicity) is closely related to the regularity of conjugacy map between diffeomorphism or cocycles. In fact there exists examples of two $(r-)-$bunching cocycles over $\ZZ-$action are $C^{r-}-$cohomologous but not $C^{r+}-$cohomologous, cf. Theorem 5.5.3 \cite{KNbook} or \cite{dLL92}. Therefore it is reasonable to conjecture that $r-$bunching condition in part (1) of the above main results is necessary.
\end{remark}

\begin{remark} 
Part (2) in the above main results is a rigidity statement for the actions in question also in the following sense. Suppose the action has a fixed point, which is quite common.  Then the only obstruction we find is value of the cocycle at a fixed point for the action, while from Livsic theorem we know that each action element has infinitely many periodic orbit obstructions to (smooth) cocycle trivialisation. Result in part (2) of the above Theorems means that most of these obstructions for individual action elements vanish, if the cocycle is trivial at a fixed point. 
\end{remark}

\begin{remark}In part (2). or the main results above, without further assumptions for $\al$, usually it is necessary to pass to a finite cover of $M$ since there exists algebraic examples taking values in compact Lie groups (hence $\infty-$bunching) which is fixed point trivial but not cohomologous to constant. (cf. Chapter 3. \cite{NTnonabelian1})
\end{remark}



The center-bunching assumption above could be relaxed to $0-$bunching when $N=\mathbb{S}^1:=\RR/\ZZ$ since in this case $0-$bunching implies center-bunching. 
\begin{coro}
Suppose $\al, M$ satisfy the same assumptions as Theorem \ref{thm: main} and $\beta: \ZZ^k\times M\to \Diff^s(\mathbb{S}^1)$ is a $C^s-$cocycle over $\al$.\begin{enumerate}
\item  $\beta$ is essentially $C^{1}-$cohomologous to constant if $\beta$ is $0-$bunched and $s>2$.
\item There is a finite cover of $M$ such that if $s>2,  s\notin \ZZ$ and $\beta$ is $0$-bunched,  then $\beta$ is  fixed point trivial if and only if  
$\beta$ lifts to a $C^{[s]-}-$coboundary.
\end{enumerate}
As a corollary, similar results corresponding to Theorem \ref{thm: cond A},\ref{thm: Cartan} also hold for $\Diff(\mathbb{S}^1)-$valued cocycles.
\end{coro}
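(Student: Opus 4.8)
The plan is to obtain the Corollary as the one--dimensional fibre case of Theorem \ref{thm: main}, and its last sentence as the corresponding case of Theorems \ref{thm: cond A} and \ref{thm: Cartan}. The only gap between the hypotheses is the bunching assumption: here we ask only that $\beta$ be $0$--bunched, while Theorem \ref{thm: main} needs center--bunching in part (2) and $r$--bunching with $r\ge 1$ in part (1). So the whole content is the reduction, recorded at the end of Section \ref{subsection: bunching}, that \emph{for $\dim N=1$ a $0$--bunched cocycle is automatically center--bunched}.

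Granting that reduction, part (1) follows by applying Theorem \ref{thm: main}(1) with $r=1$: its hypothesis ``$\beta$ is $1$--bunched'' holds (center--bunched $=1$--bunched), its regularity requirement $s>r+1$ becomes $s>2$, and its conclusion is precisely that $\beta$ is essentially $C^{1}$--cohomologous to a constant. Part (2) follows by applying Theorem \ref{thm: main}(2) verbatim, with $s>2$, $s\notin\ZZ$ and $\beta$ center--bunched. For the final sentence, Theorems \ref{thm: cond A} and \ref{thm: Cartan} are themselves consequences of Theorem \ref{thm: main} (via Lemma \ref{properties}, which supplies TNS and resonance--freeness from fullness, and via the global rigidity theorem \cite{KSp} in the Cartan case); feeding the $\mathbb{S}^1$--version of Theorem \ref{thm: main} just obtained into those same deductions yields the corresponding $\Diff(\mathbb{S}^1)$--valued statements.

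The one step that needs work — and where I expect the real difficulty — is the bunching reduction itself. It is genuinely one--dimensional: for orientation--preserving $g\in\Diff^{s}(\mathbb{S}^1)$ one has $\int_{\mathbb{S}^1}g'=1$, whence $\|Dg\|=\sup g'\ge 1$ and $\|Dg^{-1}\|=(\inf g')^{-1}\ge 1$, and it is this degree--one normalization — not a crude submultiplicative estimate on $\|D\beta(k,x)^{-1}\|$, which would fail without a smallness hypothesis on $\beta$ — that one must combine with the $0$--bunching inequality \eqref{eqn: bunching condition}, iterated along the orbit of the chosen Anosov element in each Weyl chamber, to produce the center--bunching inequality \eqref{eqn: bunching on center} with $r=1$. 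One reduces first to the orientation--preserving case by passing to the finite--index subgroup of $\ZZ^{k}$ on which $a\mapsto\operatorname{sgn}\det D\beta(a,\cdot)$ is trivial, which affects none of the conclusions; with that in hand the Corollary follows as above.
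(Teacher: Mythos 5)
Your top-level plan is the same as the paper's: reduce to Theorem \ref{thm: main} (and then to Theorems \ref{thm: cond A}, \ref{thm: Cartan}) by showing that $0$--bunching already supplies the bunching hypothesis when $\dim N=1$, which is exactly what the paper asserts in one sentence just before the Corollary. The problem is that the reduction is the entire content of the Corollary, and the route you sketch for it does not work.

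The facts you extract from $\int_{\mathbb{S}^1}g'=1$ go the wrong way. You derive $\|Dg\|\ge 1$ and $\|Dg^{-1}\|\ge 1$; but to pass from \eqref{eqn: bunching condition} to \eqref{eqn: bunching on center} with $r=1$ you need an \emph{upper} bound on $\|D\beta(k,x)^{-1}\|$, and the degree--one normalization only gives lower bounds. Worse, with the inequalities read as printed — i.e.\ $\|D\beta(k,x)\|=\sup_y\|D_y\beta(k,x)\|$ and $\|D\beta(k,x)^{-1}\|=\sup_y\|D_y\bigl(\beta(k,x)^{-1}\bigr)\|$ as separate fiberwise suprema — the claimed implication is false even for $\mathbb{S}^1$. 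Take a Morse--Smale $g\in\Diff^\infty_{+}(\mathbb{S}^1)$ with one repelling fixed point at which $g'=2$ (and $\sup g'=2$) and one attracting fixed point at which $g'=1/2$ (and $\inf g'=1/2$), and the constant cocycle $\beta(a,\cdot)=g^{a_1}$ over a $\ZZ^k$ algebraic Anosov action whose relevant element has minimal unstable expansion $\ge 3$. Then $\sup\,(g^k)'=2^k$ and $\sup\,(g^{-k})'=2^k$, so the left side of \eqref{eqn: bunching condition} is at most $(2/3)^k\to 0$ while the left side of \eqref{eqn: bunching on center} with $r=1$ is at least $(4/3)^k\to\infty$. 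No iteration along the orbit, and no orientation reduction, can repair this, because it is an obstruction to the sup--norm statement itself.

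What makes the Corollary correct is a pointwise cancellation that is genuinely one--dimensional but has nothing to do with the integral of $g'$. The bunching condition \eqref{eqn: bunching on center} is used only in Proposition \ref{prop: tilde ej good along n}, through the hypothesis $\sup_{(x,y)}k_{(x,y)}\al_{(x,y)}^{r}<1$ of the $C^r$--section theorem (Corollary \ref{coro: coro cr section}) applied with $W=N$, $E^1=\tilde E_j$, $E^2=TN$. There the cocycle enters only through the pointwise quantities at a single $y$:
\begin{equation*}
k_{(x,y)}\al_{(x,y)}
=\bigl\|D_{(x,y)}\tilde\al(nka)|_{\tilde E_j}^{-1}\bigr\|
\cdot\|D_y\beta(nka,x)\|\cdot\bigl\|\bigl(D_y\beta(nka,x)\bigr)^{-1}\bigr\|.
\end{equation*}
When $\dim N=1$ the map $D_y\beta(nka,x)$ is multiplication by a nonzero scalar, so $\|D_y\beta(nka,x)\|\cdot\|(D_y\beta(nka,x))^{-1}\|=1$ \emph{identically}, whether or not $\beta$ is orientation preserving, and $\sup_{(x,y)}k_{(x,y)}\al_{(x,y)}$ collapses to $\sup_{(x,y)}\|D_{(x,y)}\tilde\al(nka)|_{\tilde E_j}^{-1}\|$, which is $<1$ for large $n$ by the uniform expansion of $\tilde E_j$ in (2) of Corollary \ref{coro: smooth leaves til al}. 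The $0$--bunching hypothesis is still needed — it gives the domination $k_{(x,y)}<1$ and the existence of $\tilde E_j$ in the first place — but the extra factor $\|D\beta(k,x)^{-1}\|^{r}$ that distinguishes \eqref{eqn: bunching on center} from \eqref{eqn: bunching condition} disappears at the pointwise level. That is the reduction you need; the argument from $\int g'=1$ and iteration would not produce it.
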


Outline of the paper: In Chapter \ref{prel} we give basic definitions on regularity of foliations and obtaining global regularity from regularity along transverse foliations.  In Chapter \ref{section: rig tns ano} we apply general result of Rodriguez Hertz and Wang \cite{HW} to TNS Anosov actions on infranilmanifolds. In Chapter \ref{PHextension} we obtain crucial results on the partially hyperbolic action obtained as extension of the Anosov action via a cocycle. Chapter \ref{mainproofs} contains proofs of the main results.

\section{Preliminaries}\label{prel}\color{black}
\subsection{Regularity of foliations}\label{subsection: Frob thm}
In this paper we use the notion of regularity of foliations considered by Pugh, Shub, and Wilkinson \cite{PSW}. Consider a foliation $\mathcal{W}$ of an $n-$dimensional
smooth manifold $M$ by $k-$dimensional submanifolds we define $\mathcal{W}$ to be a $C^r, r\geq 1$ foliation if for each $x\in M$ there is an open neighborhood $V_x$ of $x$ and a
$C^r$ diffeomorphism $\Psi_x : V_x\to D^k\times D^{n-k}\subset \RR^n $(where $D^j$ denotes the unit ball in $\RR^j$) such that $\Psi_x$ maps $\mathcal{W}|_{V_x}$ to the standard smooth foliation of $D^k\times D^{n-k}$ by $k-$disks $D^k\times \{y\}$, $y\in D^{n-k}$. 

In particular, by Frobenius theorem (cf. Chapter 6. of \cite{PSW} and the reference therein), if $E$ is a $C^r (r\geq 1)$ $k-$dimensional distribution on $M$ (i.e., a $C^r$ section of the Grassmannian $G^kM$), and if $E$ is involutive in the sense that it is closed under Lie brackets, then through each point $p\in M$ there passes a unique integral manifold (i.e.  an injectively immersed $k-$dimensional submanifold $V\subset M$ everywhere tangent to $F$), and together the integral manifolds $C^r$ foliate $M$. 

We will use the following classical result in the theory of partially hyperbolic systems repeatedly (cf. \cite{HPS}, \cite{CP}): if $f$ is a $C^r, r\geq 1$ partially hyperbolic diffeomorphism then the (un)stable foliations $W^{s(u)}$ have uniformly $C^r-$leaves.

To study the regularity of dynamically defined foliation, a powerful tool is to consider the associated holonomy map.  Recall that the holonomy map is defined as the following: for two transverse foliations $\mathcal{F}_1,\mathcal{F}_2$ of $M$, for two $\mathcal{F}_1-$local leaves $\mathcal{F}_1(x), \mathcal{F}_1(y)$  close enough, the local holonomy map $h^{\mathcal{F}_2}$ along $\mathcal{F}_2$ between $\mathcal{F}_1(x), \mathcal{F}_1(y)$ is defined by 
\begin{eqnarray*}
h^{\mathcal{F}_2}: \mathcal{F}_1(x)&\to &\mathcal{F}_1(y)\\
z\in \mathcal{F}_1(x)&\mapsto& \mathcal{F}_2(z)\cap \mathcal{F}_1(y)
\end{eqnarray*}
Notice that the intersection above is locally unique therefore $h^{\mathcal{F}_2}$ is locally well defined.

\begin{lemma}\label{lemma:holo reg implies foliation reg}(cf. \cite{BX}, \cite{Sadovskaya} and \cite{PSW}.)
Let $r>0, r\notin \ZZ$ be given. Suppose that $\mathcal{W}$ and $\mathcal{F}$ are two transverse foliations of $M$ such that both $\W$ and $\mathcal{F}$ have uniformly $C^{r}$ leaves. We further suppose that the local holonomy maps along $\mathcal{W}$ between any two $\mathcal{F}$-leaves are locally uniformly $C^{r}$. Then $\mathcal{W}$ is a $C^{r}$ foliation of $M$. 
\end{lemma}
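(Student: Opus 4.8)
The plan is to produce, at each point $x \in M$, a single $C^r$ chart that simultaneously flattens $\mathcal{W}$ and exhibits $\mathcal{F}$ as the transverse family, and to do this by composing three $C^r$ maps: a chart flattening $\mathcal{W}$ leaf-by-leaf, a chart flattening $\mathcal{F}$ leaf-by-leaf, and the holonomy identification that glues the two pictures together. Concretely, fix $x$ and a small local transversal $T$ to $\mathcal{W}$ through $x$; since $\mathcal{W}$ has uniformly $C^r$ leaves, the union of the leaves through $T$ admits a continuous family of $C^r$ parametrizations $\phi_w : D^k \to \mathcal{W}(w)$, $w \in T$, depending continuously on $w$, giving a homeomorphism $\Phi : D^k \times T \to V_x$ that is $C^r$ along the $D^k$-direction. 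The only issue is regularity in the $T$-direction, and this is precisely where the holonomy hypothesis enters.

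First I would set up coordinates on the transversal $T$ using the foliation $\mathcal{F}$: pick one $\mathcal{F}$-leaf $\mathcal{F}(x)$ and use it, together with the $\mathcal{W}$-holonomy, to identify nearby $\mathcal{F}$-leaves. Second, I would observe that the coordinate change between the ``flatten $\mathcal{W}$'' chart and the ``flatten $\mathcal{F}$'' chart is, along each leaf, built out of the local holonomy maps $h^{\mathcal{W}}$ between $\mathcal{F}$-leaves (moving a point along its $\mathcal{W}$-leaf to a fixed transversal amounts to applying such a holonomy), which are locally uniformly $C^r$ by hypothesis; along the complementary directions the maps are $C^r$ because the leaves of $\mathcal{W}$ and of $\mathcal{F}$ are uniformly $C^r$. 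Third, I would invoke the now-standard principle (this is exactly the content of the references \cite{PSW}, \cite{BX}, \cite{Sadovskaya}) that a homeomorphism which is uniformly $C^r$ along each of two transverse continuous foliations with uniformly $C^r$ leaves, with the additional uniform $C^r$ dependence coming from the holonomies, is in fact jointly $C^r$; applying this to $\Phi$ (after composing with the $\mathcal{F}$-chart) yields the desired $C^r$ diffeomorphism $\Psi_x : V_x \to D^k \times D^{n-k}$ carrying $\mathcal{W}|_{V_x}$ to the trivial foliation by horizontal disks.

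The main obstacle is the last step: promoting ``$C^r$ separately along two transverse foliations plus $C^r$ holonomies'' to ``jointly $C^r$.'' This is a Journé-type regularity lemma, and the non-integer (Hölder) setting $r \notin \ZZ$ is what makes it clean — one shows the candidate chart and its first $[r]$ partial derivatives exist and are Hölder of exponent $r-[r]$ by differentiating along leaf directions and controlling cross-terms via the holonomy estimates, then appeals to the fact that a function whose directional derivatives along transverse Hölder foliations are themselves Hölder is globally $C^r$. Since the statement explicitly cites \cite{PSW}, \cite{BX}, \cite{Sadovskaya}, I would structure the write-up so that this step is reduced verbatim to the cited lemma, and spend the bulk of the argument verifying its hypotheses: namely that the holonomy parametrization is continuous in the transverse variable and that the induced coordinate changes inherit the uniform $C^r$ bounds from the leafwise regularity of $\mathcal{W}$, $\mathcal{F}$ and from the holonomy hypothesis.
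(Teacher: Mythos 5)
The paper does not prove this lemma: it states it with a pointer to \cite{BX}, \cite{Sadovskaya}, \cite{PSW} and uses it as a black box, so there is no internal proof to compare your sketch against; I can only assess it on its own terms.

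Your strategy is the right one and is essentially what those references do: build a candidate chart $\Psi_x$ for $\mathcal{W}$, check it is $C^r$ separately along $\mathcal{W}$-leaves (from leafwise regularity of $\mathcal{W}$) and along $\mathcal{F}$-leaves (from the assumed $C^r$ holonomy together with the $C^r$ structure on $\mathcal{F}$-leaves), then upgrade to joint $C^r$ regularity by Journ\'e --- which is exactly the content of the paper's own Lemma \ref{lemma Journe}. Two inaccuracies are worth flagging. First, your last step attributes the Journ\'e-type upgrade to \cite{PSW}, \cite{BX}, \cite{Sadovskaya}; those references are where the full conclusion of Lemma \ref{lemma:holo reg implies foliation reg} is established, whereas the tool being invoked is Journ\'e's lemma \cite{Journe} (Lemma \ref{lemma Journe} here). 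Second, the opening claim that the chart ``simultaneously flattens $\mathcal{W}$ and exhibits $\mathcal{F}$ as the transverse family'' should be read loosely: a $C^r$ chart cannot in general send $\mathcal{F}$-leaves to vertical disks, since that would make $\mathcal{F}$ a $C^r$ foliation, which is not among the hypotheses. The clean construction takes the second coordinate of $\Psi_x$ to be $\phi\circ\pi$, where $\pi$ is the $\mathcal{W}$-holonomy onto a fixed $\mathcal{F}$-plaque $T=\mathcal{F}(x)_{\mathrm{loc}}$ and $\phi$ is a $C^r$ chart of $T$, and the first coordinate to be a fixed smooth function on $V_x$ that restricts to a diffeomorphism onto $D^k$ on each $\mathcal{W}$-plaque; the first coordinate is $C^r$ along both families of leaves because each leaf is a $C^r$ submanifold, the second is constant along $\mathcal{W}$-leaves and equals $\phi$ composed with a $\mathcal{W}$-holonomy along $\mathcal{F}$-leaves, so Journ\'e gives joint $C^r$ regularity, and the inverse function theorem (using that $D\Psi_x$ is block-triangular in $T\mathcal{W}\oplus T\mathcal{F}$, hence invertible) finishes the argument. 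Your sketch is somewhat vaguer about which three maps are being composed, but the underlying idea is sound.
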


\subsection{Journ\'e Lemma}
We will use the following version of Journ\'e Lemma (cf.\cite{Journe}) later.
\begin{lemma}\label{lemma Journe}
Let $M,N$ be two smooth manifolds and $\mathcal{W}_{1}, \mathcal{W}_2$ and are two continuous foliations of $M$ with uniformly $C^r-$leaves, $r\notin \ZZ$. Moreover $\mathcal{W}_1$ is uniformly transverse to $\mathcal W_2$. If a map $f:M\to N $ is uniformly $C^r$ along the leaves of the two foliations, then it is uniformly $C^r$ on M. 
\end{lemma}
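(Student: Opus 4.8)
The plan is to follow Journ\'e's original argument \cite{Journe}, tracking H\"older exponents so that the non-integer hypothesis is genuinely used; write $r=k+\alpha$ with $k=[r]$ and $\alpha\in(0,1)$. I would treat the case $\dim\mathcal{W}_1+\dim\mathcal{W}_2=n$, which is the one arising in the applications and to which the general statement reduces (by intersecting the two foliations and passing to a transversal).

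First I would reduce to a local model. Since being uniformly $C^r$ is a local condition with uniform constants and regularity of $f$ into $N$ is tested componentwise in charts of $N$, it suffices to show: there are $\rho>0$ and $C_0<\infty$, depending only on the transversality constant and the uniform $C^r$ bounds of the leaves, such that whenever $g\colon B\to\RR$ is uniformly $C^r$ along the leaves of two transverse continuous foliations $\mathcal{W}_1,\mathcal{W}_2$ of a ball $B\subset\RR^n$ of radius $\rho$ with uniformly $C^r$ leaves and $E_1(z)\oplus E_2(z)=\RR^n$, then $g\in C^r$ on the concentric ball of radius $\rho/2$ with $C^r$-norm at most $C_0$. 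I would isolate once and for all the single geometric input used throughout: by uniform transversality and the uniform $C^r$ leaf bounds, any $p,q\in B$ with $|p-q|$ small are joined by an \emph{$su$-path} that runs first inside $\mathcal{W}_1(p)$ to a point $p'$ and then inside $\mathcal{W}_2(p')$ to $q$, with $|p-p'|+|p'-q|\le C_0|p-q|$ and, when $r>1$, with the $\mathcal{W}_1$-leg staying within $O(|p-q|^{2})$ of the straight segment $[p,p']$. I would also record that one cannot bypass the argument by straightening $\mathcal{W}_1$ and $\mathcal{W}_2$ to coordinate foliations, since that would require $C^r$ holonomies, which are not available here --- producing them is precisely the sort of statement one is after downstream, cf.\ Lemma~\ref{lemma:holo reg implies foliation reg} --- so the proof must work with genuinely curved transverse foliations.

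The core is an induction on $k=[r]$. The base case $k=0$ is immediate from the triangle inequality along an $su$-path, $|g(p)-g(q)|\le|g(p)-g(p')|+|g(p')-g(q)|\le C(|p-p'|^\alpha+|p'-q|^\alpha)\le C'|p-q|^\alpha$. For larger $k$, a map uniformly $C^{k+\alpha}$ along the leaves is in particular uniformly $C^{(k-1)+\alpha'}$ along them for every $\alpha'\in(0,1)$, so the inductive hypothesis gives $g\in C^{(k-1)+\alpha'}$ on the smaller ball, hence continuous ambient partials up to order $k-1$; the remaining task is to promote these to order $k$ with $\alpha$-H\"older top derivatives, which is carried out by the same $su$-path analysis as the model case $k=1$, applied one derivative at a time and with parameters. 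In that model case --- $g$ continuous, uniformly $C^{1+\alpha}$ along the leaves of two transverse foliations with $C^{1+\alpha}$ leaves --- one uses the splitting $E_1(z)\oplus E_2(z)=\RR^n$ to \emph{define} the only possible differential, $L(z)(v_1+v_2)=\partial_{v_1}g(z)+\partial_{v_2}g(z)$ for $v_i\in E_i(z)$ (leafwise directional derivatives), and then one shows, in a single bootstrapping estimate along $su$-paths, both that $z\mapsto L(z)$ is $\alpha$-H\"older and that $g(z+h)-g(z)-L(z)h=O(|h|^{1+\alpha})$. The decisive point is that every discrepancy produced by the mismatch of the two foliations --- the gap between the $su$-path and the straight segment, and the comparison of the leafwise derivative at $p'$ with that at $p$ --- is of size $O(|h|^{k+1})=O(|h|^{2})$, and such an error is admissible in the definition of $C^{1+\alpha}$ precisely because $1+\alpha<2$; this is exactly where $r\notin\ZZ$ enters, and indeed for integer $r$ the statement is false. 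Equivalently one may run this step through the converse Taylor (Glaeser/Whitney) characterization of $C^{r}$, assembling the full $k$-jet of $g$ at each point from the two leafwise $k$-jets via the transverse splitting and verifying the remainder bounds along $su$-paths.

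I expect the main obstacle to be exactly this $k=1$ estimate: extracting the candidate derivative from the transverse leafwise jets and running the coupled estimate for differentiability of $g$ and for H\"older continuity of $L$ so that all cross-terms between $\mathcal{W}_1$ and $\mathcal{W}_2$ land in the $O(|h|^{k+1})$ slack that exists only for non-integer $r$; getting the bookkeeping of this coupling right, and then propagating it through the induction on $k$, is the delicate part. By comparison, the localization, the reduction of the target to $\RR$, the construction of $su$-paths from uniform transversality, and the descent in $k$ are routine.
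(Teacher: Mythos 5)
The paper's own proof of this lemma is two lines: for scalar-valued $f$ it simply cites Journ\'e's theorem \cite{Journe} as a black box, and for a map $f\colon M\to N$ it works locally in a chart of $N$ and applies the scalar result to each component $f_i$. You correctly reproduce this (trivial) reduction to the scalar case in your second paragraph, but then, instead of invoking Journ\'e, you attempt to \emph{re-prove} Journ\'e's theorem. That is both unnecessary here and, more importantly, the re-proof as sketched has a genuine gap.

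The gap is in the induction on $k=[r]$. You write that once the inductive hypothesis gives $g\in C^{(k-1)+\alpha'}$ ambiently, ``the remaining task is to promote these to order $k$... which is carried out by the same $su$-path analysis as the model case $k=1$, applied one derivative at a time.'' But to apply the $k=1$ case to a $(k-1)$-th order derivative $\partial^{k-1}g$, you would need to know that $\partial^{k-1}g$ is itself uniformly $C^{1+\alpha}$ along \emph{both} foliations. The hypothesis only controls \emph{pure} (within-leaf) derivatives of $g$ along each foliation: it says $\partial_{v_1}^{k}g$ is H\"older along $\mathcal{W}_1$-leaves for $v_1\in E_1$, and $\partial_{v_2}^{k}g$ is H\"older along $\mathcal{W}_2}$-leaves for $v_2\in E_2$, but it says nothing a priori about the behaviour of $\partial_{v_1}^{k-1}g$ in the $\mathcal{W}_2$ directions (or of any mixed derivative $\partial_{v_1}^{i}\partial_{v_2}^{j}g$ with $i,j\ge 1$). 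For the same reason, the phrase ``assembling the full $k$-jet of $g$ at each point from the two leafwise $k$-jets'' glosses over the central difficulty: the two leafwise $k$-jets do not determine the mixed coefficients of the ambient $k$-jet, and recovering them is precisely what makes Journ\'e's lemma non-trivial for $k\ge 1$. Journ\'e's actual argument circumvents this by simultaneous polynomial approximation in both foliation directions rather than by differentiating one order at a time; the converse-Taylor (de la Llave--Marco--Moriy\'on / Hurder--Katok style) route you gesture at also requires substantially more than what you describe to control the missing mixed data. So as written, the inductive step does not close, and the cleanest fix is simply to do what the paper does: cite \cite{Journe} for the scalar case and reduce by charts.
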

\begin{proof}If $f$ is a function then Lemma \ref{lemma Journe} is proved in \cite{Journe}. For a map $M\to N$ we only need to prove the Lemma locally, so without loss of generality we assume $N$ is an open set in $\RR^n$, $f=(f_1,\cdots,f_n)$. Apply Journ\'e Lemma to each $f_i$ we get the proof.
\end{proof}

\section{Global rigidity of TNS Anosov actions on infranilmanifold}\label{section: rig tns ano}
Now we consider the action $\al$ in Theorem \ref{thm: main}. For any Anosov element of $\al$ there is a H\"older homeomorphism $h$ which conjugates it to an automorphism. Then by \cite{Wa}  $h$ also conjugates $\al$ to an action $\rho$ by affine automorphisms. The action $\rho$ is called the linearization of $\al$, which preserving the Haar measure on $M$.

 In \cite{HW} the authors proved that if $\rho$ has no rank one factor then $\al$ is smoothly conjugated to $\rho$. Here a rank-one factor of $\rho$ is a  projection of $\rho$ to a quotient infranilmanifold, which is, up to finite index, generated by a single element. The following lemma shows that TNS condition implies the no rank one condition of \cite{HW}.

\begin{lemma}For $\al$ in Theorem \ref{thm: main}, its linearisation $\rho$ has no rank-one factor.
\end{lemma}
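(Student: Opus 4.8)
The plan is to show the contrapositive in structural terms: if the linearisation $\rho$ of $\al$ had a rank-one factor, then $\al$ would fail to be TNS. First I would recall that a rank-one factor of $\rho$ is a projection $\pi\colon M\to M'$ onto a quotient infranilmanifold equivariant with respect to $\rho$, such that the induced $\ZZ^k$ action $\rho'$ on $M'$ factors (up to finite index) through a single $\ZZ$ action generated by an automorphism $g$. The key point is that the Lyapunov functionals of $\rho'$ are, after identifying the defining functionals, a subset of the Lyapunov functionals of $\rho$ (they arise from the eigenvalues of the $\rho$-automorphisms restricted to the sub-structure on the universal cover that defines the factor). Since $\al$ is Hölder conjugate to $\rho$ (via \cite{Wa}), and the coarse Lyapunov data is the same for $\al$ and $\rho$ — both have full-support invariant measures with Anosov (hence regular) elements, so the Weyl chamber picture is measure-independent, as noted in the excerpt — it suffices to derive a contradiction with TNS at the level of $\rho$, or equivalently of $\rho'$.

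Next I would exploit that for a single hyperbolic infranilmanifold automorphism $g$ (the generator of the rank-one factor), the tangent bundle splits into a stable and an unstable part, both nontrivial, since $g$ is hyperbolic; indeed any nontrivial element of a factor of an Anosov action that is Anosov must have both $E^s$ and $E^u$ nontrivial. Now on the factor $M'$ the $\ZZ^k$ action $\rho'$ has all its elements acting as (a finite power of) powers of $g$, so every Lyapunov functional of $\rho'$ is an integer (or rational) multiple of a single linear functional $\chi_0$ on $\ZZ^k$ — namely the one recording the log-moduli of eigenvalues of $g$. The stable directions of $g$ contribute Lyapunov functionals that are negative multiples of $\chi_0$, and the unstable directions contribute positive multiples; since both the stable and unstable bundles of $g$ are nontrivial, there is at least one Lyapunov functional $\chi^+$ of $\rho'$ that is a positive multiple of $\chi_0$ and at least one $\chi^-$ that is a negative multiple. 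Pulling these back to $\al$ through the factor map, we obtain two Lyapunov functionals of $\al$ that are negatively proportional to each other, contradicting the TNS assumption.

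The main obstacle I anticipate is making precise the statement that the Lyapunov functionals of the factor $\rho'$ genuinely embed among those of $\al$ with the correct proportionality relations, and that "rank one" in the sense of \cite{HW} indeed forces all of $\rho'$'s functionals to be proportional to a single $\chi_0$. This requires unwinding the algebraic definition of a factor: $M'=G'/\Gamma'$ with $G'$ a quotient of the nilpotent group $G$ by a $\rho$-invariant rational subgroup, the automorphisms of $\rho$ descending to automorphisms of $G'$, and the "generated by a single element up to finite index" condition translating into the image of $\ZZ^k$ in $\mathrm{Aut}(G'/\Gamma')$ being virtually cyclic. Once that dictionary is in place, the eigenvalue/Lyapunov-exponent bookkeeping is routine: hyperbolicity of the generator gives eigenvalues off the unit circle on both sides, yielding the negatively proportional pair. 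I would also need the brief remark that one may pass to a finite-index subgroup and a finite cover without affecting either the TNS property (a Weyl-chamber property, independent of such passages) or the existence/nonexistence of rank-one factors, so the "up to finite index" clauses cause no trouble.
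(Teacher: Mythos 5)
Your argument follows essentially the same route as the paper's: reduce to the observation that a virtually cyclic factor is generated by a single hyperbolic affine automorphism, whose stable and unstable bundles supply a pair of negatively proportional Lyapunov functionals, and transfer this Lyapunov/Weyl-chamber information between $\al$, its linearisation $\rho$, and the factor via topological invariance of coarse Lyapunov data. If anything your version is stated more carefully than the paper's proof, which is phrased as though $\rho$ itself were rank one; your explicit step of pulling back the factor's Lyapunov functionals to $\rho$ (and hence to $\al$) is exactly what is needed to handle a proper factor.
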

\begin{proof}
Let $\rho$ be the linearization of  TNS  action $\al$ on an infranilmanifold.
Stable and unstable foliations of individual action elements are topologically defined, and thus they are topological invariants. Therefore the same holds true for maximal intersections of these foliations, that is, for coarse Lyapunov foliations. Suppose that  the linearisation  $\rho$ is rank-one. This would imply that for a finite index subgroup in the acting group, the action  $\rho$ is generated by a single affine map $A$. This would mean that non-trivial intersections of stable manifolds for various elements of the action are exactly either the stable manifold for $A$ or the unstable manifold for $A$. This means that for the original action, after passing to  a finite index subgroup, there are exactly two coarse Lyapunov foliations, one coinciding with the stable manifold of the topological conjugate of $A$ and the other coinciding with the unstable manifold.  It is clear that the Weyl chamber picture in this case has only one Lyapunov hyperplane, and such an action cannot be TNS.  \end{proof}
\color{black}
As a result, $\al$ is smoothly conjugated to $\rho$. Therefore, without loss of generality we may assume in the rest of the paper $\al$ is a TNS, resonance free $\ZZ^k$ action formed by affine automorphisms on an infranilmanifold. 

We denote by $E_i$ and $\chi_i$ the Lyapunov distributions and Lyapunov functionals of $\al$ respectively. Then $E_i$ is integrable for any $i$ and tangent to a smooth foliation $W_i$ in $M$. In addition there exists $C>0$ and $L>0$ such that for any $a\in \ZZ^k$ for any unit $v\in E_i$, 
\begin{eqnarray}\label{eqn: poly dev exp growth}
C^{-1}e^{\chi_i(a)}< \|D\al(a)\cdot v\|< C\|a\|^{L}e^{\chi_i(a)}
\end{eqnarray}
where $\|a\|$ is the Euclidean norm of $a$ in $\ZZ^k$.
\section{Partially hyperbolic extension}\label{PHextension}
Suppose $\beta,r,s$ are defined in (1). of Theorem \ref{thm: main}. The key point to prove (1). of Theorem \ref{thm: main} is to consider the extension action $\tilde{\alpha}$ of $\al$ on $M\times N$ induced by $\beta$:
$$\tilde{\alpha}:\ZZ^k\to \Diff^{s}(M\times N), \tilde{\al}(a)\cdot (x,y)=(\al(x),\beta(a,x)\cdot y)$$
We denote by $\pi$ the canonical projection from $M\times N$ to $M$. Then $d\pi: T(M\times N)\cong TM\oplus TN \to TM$ is the projection to its first coordinate. For Lyapunov distributions $E_i\subset TM$ of $\al$, we consider the distributions $$\bar E_i\subset TM\times TN, ~~\bar E_i:=E_i\oplus \{0\}\subset TM\oplus TN $$
In general for any $i$, $\bar E_i$ is not $\tilde{\al}-$invariant, but $\bar E_i\oplus TN$ is $\tilde{\al}-$invariant and integrable (it is tangent to the smooth $\tilde{\al}-$invariant foliation  $W_i\times N$).

The following proposition is the main result of this chapter and the main ingredient in the proof of Theorem \ref{thm: main}. It proves the properties of lifted invariant distributions and existence of the horizontal foliation, namely a foliation which is uniformly transverse to the fibers $N$. Recall that we assume $r\geq 1$ and $s>r+1$. Set $\dim E_i=d_i$.
\begin{prop}\label{prop: main prop} For any $i$, there is a $C^{r+}-$distribution $\tilde E_i\subset \bar E_i\oplus TN$ such that:
\begin{enumerate}
\item $\dim \tilde{E}_i=d_i$ and $d\pi(\tilde{E}_i)=E_i$.
\item $\tilde{E}_i$ is $\tilde\al-$invariant and tangent to a $C^{r+}-$foliaiton $\tilde{W}_i$.
\item The distribution $\oplus_i\tilde{E}_i$ is tangent to a $C^{r+}-$folation $\mathcal W_H$ with uniformly $C^{s-}-$leaves. 
\end{enumerate}

\end{prop}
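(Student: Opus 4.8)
The plan is to build each distribution $\tilde E_i$ as the graph of a fiberwise section over $E_i$, obtained as the unique invariant section of a fiber-bundle map, and then apply the $C^r$ section theorem of Hirsch–Pugh–Shub to get $C^{r+}$ regularity; the leafwise regularity and the global regularity of the horizontal foliation will then follow from a combination of the Frobenius theorem, Journé's lemma (Lemma \ref{lemma Journe}), and the holonomy criterion (Lemma \ref{lemma:holo reg implies foliation reg}). Concretely, fix a Lyapunov functional $\chi_i$ and, using fullness/TNS, choose a regular $a\in\ZZ^k$ for which $E_i$ lies in the unstable bundle of $\al(a)$ and all Lyapunov exponents transverse to $E_i$ are negative under $a$ (this is where the measure-independence of the Weyl chamber picture, and the $r$-bunching of $\beta$ along $\al(a)$, enter). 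Inside the smooth $\tilde\al$-invariant bundle $\bar E_i\oplus TN$ over $M\times N$, candidate $d_i$-planes transverse to $TN$ form a bundle whose fibers are the space of linear maps $E_i\to TN$; the graph transform of $D\tilde\al(a)$ acts on this bundle. Because $D\al(a)$ expands $E_i$ at rate $\sim e^{\chi_i(a)}$ (uniformly, by \eqref{eqn: poly dev exp growth}) while $\|D\beta(a,\cdot)^{\pm1}\|$ is controlled by the bunching inequalities \eqref{eqn: bunching condition}–\eqref{eqn: bunching on center}, this graph transform is a fiberwise contraction, with contraction-versus-base-expansion ratios that make it $r$-normally hyperbolic; the $C^r$ section theorem then produces a unique invariant section, which is $C^{r+}$ because of the strict (open) bunching, giving a $C^{r+}$ distribution $\tilde E_i\subset\bar E_i\oplus TN$ with $\dim\tilde E_i=d_i$ and $d\pi(\tilde E_i)=E_i$. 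This establishes (1) and the $\tilde\al$-invariance in (2).

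For the foliation statement in (2), I would first observe that $\tilde E_i$ is contained in the $\tilde\al$-invariant bundle $TW_i\times TN$ which is tangent to the smooth foliation $W_i\times N$; restricting to a single leaf $W_i(x)\times N$ we have a $C^{r+}$ distribution of the right dimension that is invariant under the partially hyperbolic dynamics of $\tilde\al(a)$ along that leaf, and one checks that $\tilde E_i$ is precisely the unstable-type distribution for $\tilde\al(a)$ inside $W_i\times N$ (the fiber direction $TN$ being the center, the $E_i$-direction being expanded). Hence $\tilde E_i$ integrates to $\tilde W_i$ with uniformly $C^{s-}$ leaves by the standard theory of invariant manifolds for partially hyperbolic maps (the leaf regularity improving to $C^{s-}$ because $\al(a)$ is $C^\infty$ and the bunching lets us iterate the stable-manifold regularity up to order just below $s$), while the transverse regularity $C^{r+}$ of the foliation $\tilde W_i$ itself comes from the $C^{r+}$ regularity of the distribution $\tilde E_i$ together with Frobenius: since $\dim\tilde E_i=d_i$ and $\tilde E_i$ is $\tilde\al$-invariant with one expanded direction-cluster, it is involutive, so Frobenius gives a $C^{r+}$ foliation. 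Alternatively, and more robustly, I would deduce transverse $C^{r+}$-regularity of $\tilde W_i$ from Lemma \ref{lemma:holo reg implies foliation reg} by analyzing the $\tilde W_i$-holonomy between fibers.

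For (3), the key is that the distributions $\{\tilde E_i\}$ are pairwise transverse (since the $E_i$ are) and that, because $\al$ is TNS and resonance-free, their sum $\oplus_i\tilde E_i$ is again integrable: TNS guarantees that $\bigoplus_i E_i$ has no "stable–unstable" cancellation so that the corresponding coarse sum in $M$ integrates, and the resonance-free condition is exactly what is needed so that the nonlinear graph sections fit together without cross-terms — one realizes $\oplus_i\tilde E_i$ as the invariant section, under a single suitably chosen regular element, of a graph transform over the smooth bundle $\big(\oplus_i E_i\big)\oplus TN$, again $r$-normally hyperbolic by bunching, so the $C^r$ section theorem yields a $C^{r+}$ invariant distribution $E_H:=\oplus_i\tilde E_i$ with $d\pi(E_H)=\oplus_i E_i=TM$; this is the horizontal distribution. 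Involutivity of $E_H$ follows because it is the unstable-type distribution (relative to $TN$ as center) of a regular element acting on all of $M\times N$, so Frobenius gives a $C^{r+}$ foliation $\mathcal W_H$; finally its leaves are uniformly $C^{s-}$ because along each leaf $\mathcal W_H$ is the full strong-unstable manifold of a $C^\infty$ partially hyperbolic map whose bunching allows leaf-regularity up to $C^{s-}$. The transverse $C^{r+}$-regularity can again be cross-checked, and if needed upgraded, via Journé's lemma applied along the subfoliations $\tilde W_i$, whose transverse regularities we have already established and which together span $\mathcal W_H$.

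The main obstacle I anticipate is precisely the construction in (3): verifying that the sum $\oplus_i\tilde E_i$ really is a smooth-enough integrable distribution, i.e. that the independently constructed nonlinear sections $\tilde E_i$ assemble into a single graph with no obstruction. This is where the resonance-free hypothesis must be used in an essential way — to rule out that a difference $\chi_i-\chi_j$ resonates with some $\chi_l$, which would otherwise produce nontrivial coupling terms obstructing integrability — and the bookkeeping of which regular element makes the combined graph transform simultaneously $r$-normally hyperbolic (uniformly over all the relevant rate comparisons coming from \eqref{eqn: poly dev exp growth} and the bunching inequalities) is the technically delicate part, essentially the partially hyperbolic analogue of the argument of \cite{KS07}.
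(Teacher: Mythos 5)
Your overall toolkit (cone fields/graph transforms, the Hirsch--Pugh--Shub $C^r$ section theorem, Journ\'e, Frobenius) matches the paper's, and you correctly identify the existence of $\tilde E_i$ via an invariant-section/cone argument and the role of bunching. But the two hardest steps are handled incorrectly, and as written the argument would not go through.

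First, you propose to obtain the global $C^{r+}$ regularity of $\tilde E_i$ by a \emph{single} application of the $C^r$ section theorem over all of $M\times N$, saying the graph transform of $D\tilde\al(a)$ is ``$r$-normally hyperbolic by bunching.'' This fails. The $C^r$ section theorem (Lemma~\ref{lemma: cr section}) only yields regularity of the invariant section along the leaves of the chosen invariant foliation $W$, and the rate condition is $\sup_x k_x\alpha_x^r<1$ with $\alpha_x=\|Df|^{-1}_{TW(x)}\|$. If you take $W=M\times N$, then $\alpha_x$ contains the reciprocal of the strongest contraction rate of $\tilde\al(a)$, which is exponentially large (there are strongly contracted Lyapunov directions for any regular $a$); no amount of $r$-bunching of the cocycle compensates for that. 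The paper therefore has to prove regularity of $\tilde E_j$ separately along each coarse Lyapunov foliation $\tilde W^k$ (Proposition~\ref{prop: ej reg}, where the \emph{resonance-free} hypothesis is actually used, to choose the 2-plane $P$ and elements $b',b''$ separating the exponents), and separately along the fiber $N$ (Proposition~\ref{prop: tilde ej good along n}, where $r$-bunching is used), and only then assemble global $C^{r+}$ regularity by an inductive filtration argument with Journ\'e (Lemma~\ref{lemma: unstabel filtr}). You mention Journ\'e only as a ``cross-check,'' but it is the load-bearing step. You also attribute the resonance-free hypothesis to ruling out ``coupling terms obstructing integrability''; in fact it plays no role in integrability --- it is needed for the leafwise regularity along coarse Lyapunov foliations.

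Second, your argument for involutivity of $E_H=\oplus_i\tilde E_i$ --- that $E_H$ ``is the unstable-type distribution (relative to $TN$ as center) of a regular element acting on all of $M\times N$'' --- is false. For any regular $a$, the partially hyperbolic splitting of $\tilde\al(a)$ is $\tilde E^s_a\oplus TN\oplus\tilde E^u_a$ with $\tilde E^{u}_a=\oplus_{\chi_i(a)>0}\tilde E_i$ and $\tilde E^{s}_a=\oplus_{\chi_i(a)<0}\tilde E_i$, so $E_H=\tilde E^s_a\oplus\tilde E^u_a$ is never the unstable bundle of a single element. The correct argument (Section~\ref{subsection: prf main prop}) is a Lie-bracket computation: each $\tilde E^i$ is integrable, and for a cross-term $[X^j,Y^k]$ one uses TNS (not resonance-freeness) to find $b\in\mathrm{PH}$ with $\chi_j(b)>0$, $\chi_k(b)>0$ so that $\tilde E^j,\tilde E^k\subset\tilde E^u_b$, an integrable bundle contained in $\oplus_i\tilde E^i$; this requires having already established that the $\tilde E^i$ are $C^{r+}$ (at least $C^1$), which is precisely the output of the step you skipped. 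Relatedly, your suggestion to realize $\oplus_i\tilde E_i$ directly ``as the invariant section, under a single suitably chosen regular element, of a graph transform over $(\oplus_i E_i)\oplus TN$'' has the same defect: no single element makes the graph transform a fiber contraction over all of $\oplus_i E_i$.
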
 

\subsection{Robustness of $\mathrm{PH}(\beta)$}We denote by $\mathrm{PH}:=\mathrm{PH}(\beta)$ the subset of Anosov elements $a$ such that the cocycle $\beta$ over $\al(a)$ satisfies \eqref{eqn: bunching condition}, i.e. $\beta$ is $0-$bunched over the $\ZZ-$action generated by $\al(a)$.
We consider the following lemma on \textit{robustness} of $\mathrm{PH}$.
\begin{lemma}\label{lemma: op con PH}For any $a\in \mathrm{PH}$,
there exists $\epsilon>0$ small enough such that for any $b\in \ZZ^k-\{0\}$ where $d(\frac{a}{\|a\|}, \frac{b}{\|b\|})<\epsilon$, we have $b\in \mathrm{PH}$.
\end{lemma}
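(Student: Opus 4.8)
The plan is to show that membership in $\mathrm{PH}$ is governed by a condition homogeneous of degree zero in the direction of $a$, so that an entire cone of lattice directions about $a/\|a\|$ lies in $\mathrm{PH}$. I will work throughout with the normalisations of Section~\ref{section: rig tns ano}: $\al$ is affine on the infranilmanifold $M$, the $E_i$ are $\al$-invariant with uniformly positive pairwise angles, and \eqref{eqn: poly dev exp growth} holds; write $\hat c:=c/\|c\|$. First I would fix $a\in\mathrm{PH}$, choose $k\ge1$ realising \eqref{eqn: bunching condition} for $\al(a)$, and set $a':=ka$, so that
\[
\theta:=\sup_{x\in M}\Big(\|(D_x\al(a')|_{E^u_{\al(a')}})^{-1}\|\cdot\|D\beta(a',x)\|\Big)<1 .
\]
Because $a$ is Anosov it is regular, hence interior to a Weyl chamber $\Cc$; I would pick $\epsilon_1>0$ so that every $b\in\ZZ^k\setminus\{0\}$ with $d(\hat a,\hat b)<\epsilon_1$ also lies in $\Cc$. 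For such $b$ and all $m\ge1$, $\al(mb)$ is Anosov and, since the signs of the $\chi_i$ are constant on $\Cc$, one has $E^u_{\al(mb)}=E^u_{\al(a')}=\bigoplus_{\chi_i(a)>0}E_i=:E^+$, an $\al$-invariant subbundle.

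The key point will be a submultiplicativity estimate obtained by keeping the base and fibre derivatives paired. Since $\al(ja')=\al(a')^{j}$, iterating the cocycle identity writes both $(D_x\al(ja')|_{E^+})^{-1}$ and $D\beta(ja',x)$ as compositions of $j$ factors whose $l$-th terms have norms bounded by $\|(D_{\al(la')x}\al(a')|_{E^+})^{-1}\|$ and $\|D\beta(a',\al(la')x)\|$ respectively ($l=0,\dots,j-1$); pairing the two products term by term and using $\|(D_y\al(a')|_{E^+})^{-1}\|\cdot\|D\beta(a',y)\|\le\theta$ for every $y$ gives, for all $j\ge1$,
\[
\sup_{x\in M}\Big(\|(D_x\al(ja')|_{E^+})^{-1}\|\cdot\|D\beta(ja',x)\|\Big)\le\theta^{\,j}.
\]
I would also record two crude bounds valid for all $w\in\ZZ^k$, $x\in M$: by \eqref{eqn: poly dev exp growth} and the uniform transversality of the $E_i$, $\|(D_x\al(w)|_{E^+})^{-1}\|\le C_1 e^{L_1\|w\|}$ with $L_1:=\max_i\|\chi_i\|$; and, writing $w$ as a word of length $\le\sqrt k\,\|w\|$ in a fixed finite generating set of $\ZZ^k$, $\|D\beta(w,x)\|\le e^{L_2\|w\|}$ for a suitable $L_2\ge0$. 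Here $C_1,L_1,L_2$ depend only on $\al,\beta$, and $L_1+L_2>0$.

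Now, for $b$ with $d(\hat a,\hat b)<\epsilon$ (with $\epsilon\le\epsilon_1$ still to be fixed) and $m$ large, I would write $mb=ja'+w$ with $j:=\lfloor m\|b\|/\|a'\|\rfloor$; a direct estimate with the nearly parallel unit vectors $\hat a,\hat b$ yields $\|w\|\le\epsilon\,m\|b\|+\|a'\|$ and $j\ge m\|b\|/\|a'\|-1$. Factoring $\al(mb)=\al(a')^{j}\circ\al(w)$ and $\beta(mb,x)=\beta(ja',\al(w)x)\circ\beta(w,x)$ and combining the three displayed inequalities with the chain rule,
\[
\sup_{x\in M}\Big(\|(D_x\al(mb)|_{E^+})^{-1}\|\cdot\|D\beta(mb,x)\|\Big)\ \le\ C_1 e^{(L_1+L_2)\|w\|}\,\theta^{\,j}\ \le\ K\, e^{\,m\|b\|\left((L_1+L_2)\epsilon+\|a'\|^{-1}\log\theta\right)},
\]
where $K:=C_1\theta^{-1}e^{(L_1+L_2)\|a'\|}$ depends only on $a,\beta$. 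Since $\log\theta<0$, choosing $\epsilon:=\min\{\epsilon_1,\ -\log\theta/(2\|a'\|(L_1+L_2))\}$ — a quantity depending only on $a$ — makes the exponent $\le\tfrac12\,m\|b\|\,\|a'\|^{-1}\log\theta\le\tfrac12\,m\,\|a'\|^{-1}\log\theta\to-\infty$. Hence the supremum is $<1$ for $m$ large, which is exactly \eqref{eqn: bunching condition} for $\al(b)$ (with exponent $m$), so $b\in\mathrm{PH}$.

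The only real difficulty I anticipate is the bookkeeping forced by two mismatches. The bunching constant is a supremum of a product, not a product of suprema, which is why the argument must keep $D\al$ and $D\beta$ paired along $\al(a')$-orbits (the submultiplicativity step). And since $\|b\|/\|a'\|$ is generally irrational, the remainder $w$ in $mb=ja'+w$ has norm growing linearly in $m$ — but only with coefficient proportional to the angular distance $\epsilon$, which is precisely why a sufficiently narrow cone, with $\epsilon$ depending on $a$ alone, suffices. Everything else is routine once \eqref{eqn: poly dev exp growth} and the constancy of the Weyl-chamber data on $\Cc$ are available.
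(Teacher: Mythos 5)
Your proof is correct, and at bottom it is the same argument the paper gives: put $b$ in the Weyl chamber of $a$, decompose a large multiple of $b$ as a large multiple of $a$ plus a remainder whose norm is proportional to (angular distance) $\times$ (multiple), use bunching on the main term and crude exponential bounds on the remainder, and let the geometric decay from bunching beat the $\epsilon$-small exponential loss. There are, however, two technical choices where you are cleaner. First, you iterate to $mb$ and let $m\to\infty$, whereas the paper works with $b$ itself and must separately dispose of the finitely many $b$ with $\|b\|$ below a threshold (by arguing that for those, closeness in direction forces $\hat b=\hat a$, a somewhat delicate discreteness point). Second, you keep $\|(D\al(a')|_{E^+})^{-1}\|$ and $\|D\beta(a',\cdot)\|$ \emph{paired} along the $\al(a')$-orbit, obtaining the clean geometric bound $\theta^j$ directly from submultiplicativity; the paper instead bounds $\|D\beta(nk_0a,\cdot)\|$ by invoking the polynomial-times-exponential estimate \eqref{eqn: poly dev exp growth} for the iterated base derivative, which introduces the $n^L$ factor that then has to be absorbed by the $e^{-n\lambda}$ gain. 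Your pairing observation makes that bookkeeping disappear and, as a bonus, does not use the affine structure of $\al$ (constancy of $D\al$), so it would apply verbatim to a non-algebraic Anosov base. Both routes are valid; yours is a modest streamlining of the same idea.
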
 
\begin{proof}By finiteness of Lyapunov functionals there exists $D_1>0$ such that for any $m\in \ZZ^k$, for any Lyapunov functional  $\chi$,
\begin{equation}\label{eqn: uni boun chi}
\|\chi(m)\|\leq D_1\cdot \|m\|
\end{equation}

Similarly there exists $D_2>0$ such that for any $x\in M, m\in \ZZ^k$, 
\begin{equation}\label{eqn: uni boun dbeta}
\|D\beta(m,x)\|\leq e^{D_2\|m\|}
\end{equation}  

For fixed $a\in \mathrm{PH}$, by definition there exists $k_0\geq 1, \lambda>0$ such that for any $x\in M$
\begin{equation}\label{eqn: def PH}
\|D\beta(k_0a,x)\|<e^{-\lambda}\|D_x\al(k_0a)|_{E^u_{a}}^{-1}\|^{-1}
\end{equation}

By \eqref{eqn: poly dev exp growth}, there exists $C_0>0$ for any $\chi$ such that $\chi(a)>0$, for any $n\in \ZZ^+$, we have
\begin{equation*}
\|D_x\al(nk_0a)|_{E^u_{a}}^{-1}\|^{-1}\leq C_0\cdot n^Lk_0^L \|a\|^L e^{nk_0\chi(a)}
\end{equation*}
where $L$ is defined in \eqref{eqn: poly dev exp growth}. Combine with \eqref{eqn: def PH}, for any $\chi$ such that $\chi(a)>0$, for any $n\in \ZZ^+$, we have 
\begin{equation}\label{eqn: est beta iter}
\|D\beta(nk_0a,x)\|<  e^{-n\lambda} C_0\cdot n^Lk_0^L \|a\|^L e^{nk_0\chi(a)}
\end{equation}

Notice that by subadditivity, if $\frac{b}{\|b\|}=\frac{a}{\|a\|}$ then $b\in\mathrm{PH}$. Now we pick $\epsilon $ small enough such that if $b$ satisfies $d(\frac{a}{\|a\|}, \frac{b}{\|b\|})<\epsilon$, then 
\begin{enumerate}
\item $b$ is in the same Weyl chamber as $a$.
\item There exists $N=N(\epsilon,a)$ large such that for any $\|b\|< N$, $$d(\frac{a}{\|a\|}, \frac{b}{\|b\|})<\epsilon\Rightarrow \frac{b}{\|b\|}=\frac{a}{\|a\|}$$
\end{enumerate}  
So for $b$ such that $\|b\|<N$ and $d(\frac{a}{\|a\|}, \frac{b}{\|b\|})<\epsilon$, we have $b\in \mathrm{PH}$. For $b$ with large norm,  we consider $n_0k_0a$ is the element in  $\{k_0\ZZ\cdot a \}$ which is closest to $b$. If $N$ is large enough, by geometry we have  
\begin{equation}\label{eqn: remainder}
\|b-n_0k_0a\|<2\epsilon n_0k_0\|a\|
\end{equation}
We take the Lyapunov functional $\chi_0$ such that $\chi_0(b)=\min_{\chi, \chi(b)>0}\chi(b)$. Since $b$ is in the same Weyl chamber as $a$ then $\chi_0(a)>0$. Then for any $x\in M$, by \ref{eqn: poly dev exp growth} there is $C_1>0$ does not depend on the choice of $b$ and $\chi_0$ (only depends on $\al$) such that 
\begin{eqnarray}\label{eqn: est b uns}
\|D_x\al(b)|^{-1}_{E^u_b}\|^{-1}& \geq & C_1 e^{\chi_0(b)}\\ \nonumber
&=&C_1 e^{\chi_0(n_0k_0a)}e^{\chi_0(b-n_0k_0a)}\\ \nonumber
&\geq &C_1 e^{n_0k_0\chi_0(a)}e^{-2D_1n_0k_0\epsilon\|a\|} \quad\text{     by \eqref{eqn: uni boun chi} and \eqref{eqn: remainder}}
\end{eqnarray}
On the other hand, by subadditivity
\begin{eqnarray}
\|D\beta(b,x)\| &\leq & \|D\beta(n_0k_0a,x)\|\cdot \|D\beta(b-n_0k_0a, \al(n_0k_0a)\cdot x)\|\\\nonumber 
&\leq & \|D\beta(n_0k_0a,x)\| \cdot e^{2D_2n_0k_0 \epsilon\|a\|  }\quad \text{ by \eqref{eqn: uni boun dbeta} and \eqref{eqn: remainder}}\\ \nonumber
&\leq & e^{-n_0\lambda} C_0\cdot n_0^Lk_0^L \|a\|^L e^{n_0k_0\chi_0(a)} \cdot e^{2D_2n_0k_0 \epsilon\|a\|  }\\\nonumber 
&\quad& \text{ by \eqref{eqn: est beta iter} since $\chi_0(a)>0$ }
\end{eqnarray}

Comparing with \eqref{eqn: est b uns}, we know that if we choose $\epsilon\ll \frac{\lambda}{(D_1+D_2)k_0}$, for any $x\in M$ we have $\|D\beta(b,x)\|<\|D_x\al(b)|^{-1}_{E^u_b}\|^{-1}$ which implies $b\in \mathrm{PH}$.
\end{proof}

\subsection{Existence of $\tilde{E}_i$ and $\tilde W_i$.}\label{subsection: exist ei wi}
Since for any Weyl chamber there is an elment in $\mathrm{PH}$, for any Lyapunov distribution $E_i$ by \eqref{eqn: bunching condition} we can choose an $a\in \mathrm{PH}$ (or take $na$ for $n$ large if necessary) such that
\begin{eqnarray}\label{eqn: dom split Ei TN}
\sup_{x\in M}\|D_x\al(a)|^{-1}_{E_i}\|\cdot \|D\beta(a,x)\|<1
\end{eqnarray}  
The first step to prove the existence of $\tilde{E}_i$ and $\tilde{W}_i$ is to construct a cone field on $W_i\times N$ which is contracted by a $D\tilde\al(a)$.
\begin{lemma}\label{lemma: cone E tilde }There exists $l=l(a)\in \NN$, $\gamma=\gamma(a)>0, \epsilon\in (0,1)$ such that for the cone field $$\mathcal{C}_{i, \gamma}:=\{(u,v)\in E_i\oplus TN, \|v\| \leq \gamma \cdot \|u\| \}$$ we have $$D\tilde{\al}(la)\cdot \mathcal{C}_{i,\gamma}\subset \mathcal{C}_{i,(1-\epsilon)\gamma}$$
\end{lemma}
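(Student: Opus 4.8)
The plan is to show that the relevant contraction properties along $E_i$ (after iterating $\al(a)$) dominate, in a quantitative way, the possible expansion of $\beta$ in the fiber direction, and then to package this into a cone estimate for $D\tilde\al(la)$. First I would fix the element $a\in\mathrm{PH}$ chosen so that \eqref{eqn: dom split Ei TN} holds, and record the two basic quantities that control the geometry: on one hand $\sup_{x}\|D_x\al(a)|_{E_i}^{-1}\|\cdot\|D\beta(a,x)\|=:\theta<1$, and on the other hand a uniform bound $\sup_x\|D_x\al(a)|_{E_i}^{-1}\|\le C$ coming from \eqref{eqn: poly dev exp growth} (so the base map does not contract $E_i$ faster than a fixed rate). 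The point of choosing $a\in\mathrm{PH}$ is precisely that the fiber derivative $D\beta(a,x)$ is dominated by the unstable expansion of $\al(a)$, and since $E_i$ is itself a Lyapunov subspace on which $\al(a)$ expands (because we took $a$ in the Weyl chamber where $\chi_i(a)>0$, or rather we arranged \eqref{eqn: dom split Ei TN} directly), the relative contraction of $TN$ against $E_i$ under $D\tilde\al(a)$ is by a factor $\le\theta<1$ up to bounded distortion.

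Next I would write out the action of $D\tilde\al(a)$ on a pair $(u,v)\in E_i\oplus TN$ at a point $(x,y)$: the $E_i$-component of the image is $D_x\al(a)\cdot u$, with $\|D_x\al(a)\cdot u\|\ge C^{-1}\|u\|$ roughly, and the $TN$-component is $D\beta(a,x)\cdot v$ plus — and this is the subtle point — a term coming from the fact that $\bar E_i$ is not $\tilde\al$-invariant, i.e. $D\tilde\al(a)$ applied to $(u,0)$ has a nonzero $TN$-component, call it $T(a,x)u$, whose norm is bounded by some uniform constant $K$ times $\|u\|$. So if $(u,v)\in\mathcal C_{i,\gamma}$, i.e. $\|v\|\le\gamma\|u\|$, the image $(u',v')$ satisfies
\begin{equation*}
\|v'\|\le \|D\beta(a,x)\|\cdot\|v\|+K\|u\|\le \big(\|D\beta(a,x)\|\,\gamma+K\big)\|u\|,
\qquad
\|u'\|\ge \|D_x\al(a)|_{E_i}^{-1}\|^{-1}\|u\|.
\end{equation*}
Therefore $\|v'\|\le \big(\|D_x\al(a)|_{E_i}^{-1}\|\,\|D\beta(a,x)\|\,\gamma + K\|D_x\al(a)|_{E_i}^{-1}\|\big)\|u'\|\le(\theta\gamma+K')\|u'\|$ for a uniform $K'$. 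This single step does not contract the cone — the additive term $K'$ spoils it — so one iteration is not enough.

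The resolution, and the key technical step, is to iterate: replacing $a$ by $la$ and using subadditivity, $\|D\beta(la,x)\|\le\theta^l\cdot(\text{poly in }l)\cdot\|D_x\al(la)|_{E^u_a}^{-1}\|^{-1}$ while $\|D_x\al(la)|_{E_i}^{-1}\|$ still contracts $TN$ relative to $E_i$ by $\theta^l$ up to subexponential factors — the additive "off-diagonal" constant $K'$ for the $l$-th iterate is bounded by a telescoping sum $\sum_{j=0}^{l-1}(\text{expansion of }E_i)^{-(l-j)}\cdot(\text{growth of }\beta)^{j}$, which because of \eqref{eqn: dom split Ei TN} is a convergent geometric-type sum, hence bounded by a uniform constant $K_\infty$ independent of $l$. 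Meanwhile the multiplicative factor $\theta^l\to 0$. So for the $l$-th iterate the cone estimate reads $\|v'\|\le(\theta^l\gamma+K_\infty)\|u'\|$ roughly — still not obviously better. The correct bookkeeping is instead: choose $\gamma$ \emph{first}, large enough that $K_\infty\le\frac\gamma2$ say (using that $K_\infty$ is independent of the final $l$ but one must check it is also controlled uniformly — this is where care is needed), then choose $l$ large enough that $\theta^l\cdot(\text{poly}(l))\le\frac{1}{4}$; then $\|v'\|\le(\tfrac14\gamma+\tfrac12\gamma)\|u'\|$ — still too big.

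Let me restate the clean version, which is the standard graph-transform estimate: the right inequality to chase is
\begin{equation*}
\frac{\|v'\|}{\|u'\|}\le \lambda_l\cdot\frac{\|v\|}{\|u\|}+\eta_l,
\end{equation*}
where $\lambda_l=\sup_x\|D_x\al(la)|_{E_i}^{-1}\|\,\|D\beta(la,x)\|$ and $\eta_l=\sup_x\|D_x\al(la)|_{E_i}^{-1}\|\cdot(\text{off-diagonal norm of }D\tilde\al(la)\text{ on }\bar E_i)$. By \eqref{eqn: dom split Ei TN} and its iterates $\lambda_l\to 0$; and $\eta_l$, by the telescoping estimate above combined with \eqref{eqn: poly dev exp growth} and the definition of $\mathrm{PH}$, is bounded by $C\sum_{j\ge 0}\lambda_{(\cdot)}$-type terms times a polynomial, hence $\eta_l\le \eta_\infty<\infty$ uniformly, and in fact one can arrange $\eta_l\to\eta_*$ with $\eta_*$ small by first making the one-step off-diagonal term small — but it need not go to zero. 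The fixed point of the affine map $t\mapsto\lambda_l t+\eta_l$ is $t_l^*=\eta_l/(1-\lambda_l)$, which is a finite number $\gamma_0$; so I would \textbf{set} $\gamma:=2\gamma_0$ (any number $>t_l^*$ for the chosen $l$), pick $l$ so that $\lambda_l<1$ and moreover $\lambda_l\gamma+\eta_l\le(1-\epsilon)\gamma$ for some $\epsilon\in(0,1)$ — which holds automatically once $\gamma>t_l^*$ and $\lambda_l<1$, taking $\epsilon:=1-\lambda_l-\eta_l/\gamma>0$. This yields exactly $D\tilde\al(la)\cdot\mathcal C_{i,\gamma}\subset\mathcal C_{i,(1-\epsilon)\gamma}$.

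The main obstacle I anticipate is the uniform control of the "off-diagonal" term $\eta_l$ — i.e. showing the non-invariance of $\bar E_i$ under $\tilde\al$ contributes only a bounded (not growing in $l$) additive error to the cone slope. This is exactly where the $\mathrm{PH}$ hypothesis \eqref{eqn: bunching condition} (fiber derivative dominated by unstable expansion) and the polynomial-deviation bound \eqref{eqn: poly dev exp growth} on $E_i$ must be combined in a telescoping/geometric-sum estimate; the subexponential (polynomial-in-$l$) correction factors in \eqref{eqn: poly dev exp growth} and \eqref{eqn: est beta iter} are harmless since $\lambda_l$ decays exponentially, but they must be carried through carefully. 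Everything else — the graph-transform form of the cone estimate and the choice of $\gamma,l,\epsilon$ — is then routine.
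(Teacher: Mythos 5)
Your proposal is correct and matches the paper's proof: both write $D\tilde\al(la)$ in block lower-triangular form on $E_i\oplus TN$, use the $\mathrm{PH}$ domination together with the polynomial-deviation bound \eqref{eqn: poly dev exp growth} to make the diagonal ratio $\lambda_l<1$ for $l$ large, and then take $\gamma$ larger than the fixed point $\eta_l/(1-\lambda_l)$ of the affine slope map for that fixed $l$. One small simplification: since $l$ is chosen before $\gamma$, the off-diagonal quantity $\eta_l=\sup\|C_l\|\cdot\|A_l^{-1}\|$ for that fixed $l$ is finite simply by compactness of $M\times N$, so the telescoping argument you sketch to bound it uniformly in $l$ is valid but unnecessary (and the paper does not use it).
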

\begin{proof}By \eqref{eqn: dom split Ei TN} there exists $\lambda_i<1$ such that $$\sup_{x\in M}\|D_x\al(a)|^{-1}_{E_i}\|\cdot \|D\beta(a,x)\|<\lambda_i$$
Consider any $(x,y)\in M\times N$ and any tangent vector $(u,v)\in E_i(x)\times T_yN$. For $n\in \NN$ we denote by $$D_{x,y}\tilde{\al}(na)\cdot (u,v)^t:=\begin{pmatrix}
A_n(x) & \\
C_n(x,y)& D_n(x,y)
\end{pmatrix}\cdot (u,v)^t$$
Then by definition of $\lambda_i$ and \eqref{eqn: poly dev exp growth}, for any $(x,y)\in M\times N$ and any $n\in \NN$, we have
\begin{eqnarray}\label{eqn An Dn}
\lambda_i^n\min_{\|u\|=1, u\in E_i(x)}\|A_n(x)\cdot u\|&>&\max_{\|v\|=1, v\in T_yN}\|D_n(x,y)\cdot v\|\\ \nonumber 
cn^{L}e^{n\chi_i(a)}\geq \max_{\|u\|=1, u\in E_i(x)}\|A_n(x)\cdot u\|&\geq& \min_{\|u\|=1, u\in E_i(x)}\|A_n(x)\cdot u\|\geq  c^{-1}e^{n\chi_i(a)}
\end{eqnarray}
where $c=c(E_i, a)\geq 1$ is a number only depends on $a$ and $E_i$. Choose an $l=l(a)$ and a $\gamma=\gamma(a)$ such that
\begin{eqnarray}
c^{-1}-\lambda^l cl^L&>&0\\
\gamma&>&\frac{\sup_{(x,y)\in M\times N}\|C_l(x,y)\|}{c^{-1}-\lambda_i^l cl^L}
\end{eqnarray}
Then we claim that there is $\epsilon\in(0,1)$ such that our choice of $(l,\gamma,\epsilon)$ satisfies Lemma \ref{lemma: cone E tilde }. In fact by \eqref{eqn An Dn}, $$\|D_l\|\leq \lambda_i^{l}\cdot cl^{L}e^{l\chi_i(a)}$$ If we denote $D_{x,y}\tilde{\al}(la)\cdot (u,v)$ by $(\tilde{u},\tilde{v})$ for any $(u,v)\in E_i(x)\oplus T_yN$ such that $\|v\|\leq \gamma\|u\|$, then we have $$\|\tilde{v}\|\leq \sup_{(x,y)\in M\times N}\|C_l(x,y)\|\cdot \|u\|+ \lambda_i^{l}\cdot cl^{L}e^{l\chi_i(a)}\cdot \gamma \|u\|$$
And by $\|A_l^{-1}\|^{-1}\geq c^{-1}e^{l\chi_i(a)}$ we know that $$\|\tilde{u}\|\geq c^{-1}e^{l\chi_i(a)}\|u\|$$ 
Therefore by our choice of $\gamma$ and $l$ we have $\frac{\|\tilde{u}\|}{\|\tilde{v}\|}>\gamma^{-1}$. Then we can easily choose $\epsilon$ satisfying the condition of Lemma \ref{lemma: cone E tilde }.
\end{proof}

As a corollary,
\begin{coro}\label{coro: smooth leaves til al}For $i$, $a$ and the cone field $\mathcal{C}_{i,\gamma}$ in Lemma \ref{lemma: cone E tilde } we have\begin{enumerate}
\item  For any $(x,y)\in M\times N$, the subset $$\tilde{E}_i(x,y):=\cap_{n\geq 0} D_{\tilde{\al}(-na)\cdot (x,y)}\tilde{\al}(na)\cdot \mathcal{C}_{i,\gamma}(\tilde{\al}(-na)\cdot (x,y))$$ is a $D\tilde{\al}(a)-$invariant $d_i-$dimensional subspace of $E_i(x)\oplus T_yN$ which continuously depends on $(x,y)$ and uniformly transverse to $TN$.
\item $\tilde{E}_i$ is invariant under $\tilde{\al}-$action for any $i$. Moreover there is $C'>0$ such that for any $b\in \ZZ^k$, any $v\in \tilde{E}_i$ with $\|v\|=1$, 
\begin{equation}\label{eqn joint ply dev exp gro}
C'^{-1}e^{\chi_i(b)}< \|D\tilde{\al}(b)\cdot v\|< C'\|b\|^{L}e^{\chi_i(b)}
\end{equation}
where $L$ is the same as in \eqref{eqn: poly dev exp growth}.
\item For any $i$, $\tilde{E}_i$ is tangent to
an $\tilde{\al}-$invariant foliation $\tilde{W}_i$ of $M\times N$ with uniformly $C^{s}-$leaves.
\end{enumerate}
\end{coro}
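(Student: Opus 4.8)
We prove the three assertions in turn; everything rests on the forward-invariant cone field $\mathcal C_{i,\gamma}$ and its contraction property established in Lemma \ref{lemma: cone E tilde }. \emph{Assertion (1): existence, dimension, transversality and continuity of $\tilde E_i$.} This is the classical ``collapsing cones'' argument. By Lemma \ref{lemma: cone E tilde }, along the $la$-orbit the pushed-forward cones $D_{\tilde\al(-nla)(x,y)}\tilde\al(nla)(\mathcal C_{i,\gamma})$ form a decreasing sequence of cones at $(x,y)$, all contained in $\mathcal C_{i,\gamma}(x,y)$; moreover the estimates in the proof of that lemma show that the $n$-th such cone is a graph over $E_i$ thickened in the vertical ($TN$-)direction by an amount that decays geometrically in $n$. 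Hence (using $\dim\bar E_i=d_i$) the intersection over $n$ is a $d_i$-dimensional subspace $\tilde E_i(x,y)\subset E_i(x)\oplus T_yN$, still contained in $\mathcal C_{i,\gamma}(x,y)$; one checks that intersecting along the $a$-orbit, as in the statement, gives the same subspace, which is then $D\tilde\al(a)$-invariant by construction --- it is the maximal $D\tilde\al(a)$-forward-invariant distribution contained in the cone family. Since $\mathcal C_{i,\gamma}$ contains no nonzero vertical vector, $\tilde E_i(x,y)\cap T_yN=\{0\}$ with uniform transversality, so $d\pi|_{\tilde E_i}\colon\tilde E_i\to E_i$ is an isomorphism with uniformly bounded inverse; continuity of $(x,y)\mapsto\tilde E_i(x,y)$ follows from the uniform collapse of these cones together with continuity of $D\tilde\al(a)$ and of $\mathcal C_{i,\gamma}$.

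\emph{Assertion (2): invariance under the whole action, and the growth estimate.} Fix $b\in\ZZ^k$. Since $\tilde\al$ is abelian, $D\tilde\al(b)$ commutes with $D\tilde\al(a)$, so $D\tilde\al(b)\tilde E_i$ is again a $d_i$-dimensional $D\tilde\al(a)$-invariant distribution. With respect to $TM\oplus TN$ the bundle map $D\tilde\al(b)$ is block lower triangular, preserves $\bar E_i\oplus TN$ and the vertical bundle $TN$, and induces $D\al(b)$ on the quotient $E_i$; being continuous over the compact $M\times N$ it is uniformly bounded, hence carries the slope-$\le\gamma$ graph $\tilde E_i$ into some cone $\mathcal C_{i,\gamma'}$ with $\gamma'<\infty$ uniform in $(x,y)$. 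A cone of any fixed width collapses onto $\tilde E_i$ under the forward $la$-iterates (the estimates in the proof of Lemma \ref{lemma: cone E tilde } show that $D\tilde\al(la)$ returns $\mathcal C_{i,\gamma'}$ to $\mathcal C_{i,\gamma}$ in finitely many steps and then contracts), whereas $D\tilde\al(nla)(D\tilde\al(b)\tilde E_i)=D\tilde\al(b)(D\tilde\al(nla)\tilde E_i)=D\tilde\al(b)\tilde E_i$ for every $n$. Therefore $D\tilde\al(b)\tilde E_i\subseteq\tilde E_i$, with equality by equality of dimensions, proving $\tilde\al$-invariance. For \eqref{eqn joint ply dev exp gro}: a unit vector $v\in\tilde E_i$ has $u:=d\pi(v)\in E_i$ with $(1+\gamma^2)^{-1/2}\le\|u\|\le1$, and $d\pi(D\tilde\al(b)v)=D\al(b)u$ with $\|D\al(b)u\|\le\|D\tilde\al(b)v\|\le(1+\gamma^2)^{1/2}\|D\al(b)u\|$ (again since $D\tilde\al(b)v\in\tilde E_i$ has slope $\le\gamma$); applying \eqref{eqn: poly dev exp growth} to $u$ then gives \eqref{eqn joint ply dev exp gro} with $C'=(1+\gamma^2)^{1/2}C$.

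\emph{Assertion (3): integrability and $C^s$ leaves.} For each $x$ the leaf $W_i(x)\times N$ is a smooth $\tilde\al(a)$-invariant submanifold carrying the $\tilde\al(a)$-invariant lamination by vertical fibres $\{w\}\times N$, $w\in W_i(x)$; inequality \eqref{eqn: dom split Ei TN} says precisely that $\tilde\al(a)$ restricted to this leaf is normally expanding along that lamination, with normal direction $E_i$. By the Hirsch--Pugh--Shub theory of normally hyperbolic laminations --- the invariant/$C^r$ section theorem, applied fibrewise over the (non-compact) base $W_i(x)$, cf.\ \cite{HPS}, \cite{PSW} --- the dominating distribution $\tilde E_i$ is uniquely integrable along each such leaf, and its integral manifolds are the unstable manifolds of this normally expanding structure; being unstable manifolds of the $C^s$ diffeomorphism $\tilde\al(a)$ they are $C^s$, with estimates uniform over $M\times N$. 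As $x$ varies these integral manifolds assemble into a foliation $\tilde W_i$ of $M\times N$ subordinate to the smooth foliation $W_i\times N$, with uniformly $C^s$ leaves, and $\tilde W_i$ is $\tilde\al$-invariant because every $D\tilde\al(b)$ preserves $\tilde E_i$ by assertion (2). The delicate point of the whole argument is this last step: one must run the normally hyperbolic lamination machinery on the non-compact leaves $W_i(x)\times N$ and obtain existence, uniqueness, $C^s$ regularity and continuity of the leaves all uniformly in $x$ (the ambient $M\times N$ is compact, but the leaves of $W_i\times N$ are not). Assertions (1) and (2) are, by contrast, routine once Lemma \ref{lemma: cone E tilde } is available. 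This uniform control is also precisely what will be upgraded in Proposition \ref{prop: main prop}, from $C^s$ leaf regularity to $C^{r+}$ regularity of the foliations $\tilde W_i$ and $\mathcal{W}_H$, using the $C^r$ section theorem together with the $r$-bunching hypothesis.
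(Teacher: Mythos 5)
Your proof is correct, and parts (1) and (3) run along essentially the same lines as the paper: (1) is the classical graph-transform/collapsing-cone construction, and (3) restricts $\tilde\al(a)$ to the invariant smooth foliation $W_i\times N$, views $\tilde E_i\oplus TN$ as an (unstable $\oplus$ center) splitting there, and invokes the standard HPS/partially hyperbolic theory to get uniformly $C^s$ unstable leaves; your ``normally hyperbolic lamination over a non-compact leaf'' framing is equivalent, and the non-compactness you flag is harmless exactly because all estimates are uniform over the compact total space $M\times N$ — the paper treats this point tersely for the same reason.

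Where you genuinely diverge is part (2). The paper proves $\tilde\al$-invariance of $\tilde E_j$ by identifying $\tilde E_j$ with the Oseledec (Lyapunov) subspace of $D\tilde\al(a)$ for the exponent $\chi_j(a)$ with respect to an arbitrary invariant measure, and then using commutativity plus uniqueness of Lyapunov subspaces to conclude $D\tilde\al(b)\tilde E_j=\tilde E_j$. You instead argue entirely at the level of cones: $D\tilde\al(b)$ is block lower-triangular on $\bar E_i\oplus TN$, uniformly bounded with uniformly bounded inverse, so it carries $\tilde E_i$ into a cone field of some finite width $\gamma'$; the affine slope recursion implicit in the proof of Lemma~\ref{lemma: cone E tilde } contracts any bounded slope into $\gamma$ in finitely many steps, so the collapsing-cone characterization of $\tilde E_i$ (as the unique $D\tilde\al(la)$-invariant $d_i$-dimensional distribution inside a bounded cone) forces $D\tilde\al(b)\tilde E_i=\tilde E_i$. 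This is a more elementary route: it stays purely within the uniform, topological cone-field picture and avoids invoking the multiplicative ergodic theorem or worrying about whether the Lyapunov exponents of $D\beta$ on $TN$ might collide with those on the $\tilde E_j$'s for some invariant measure. The trade-off is that you rely on the proof of Lemma~\ref{lemma: cone E tilde } (the explicit slope estimates), not merely its statement; that reliance is legitimate, but worth noting. Your derivation of the growth bound \eqref{eqn joint ply dev exp gro} from transversality plus \eqref{eqn: poly dev exp growth} is the same calculation as the paper's, just written out with the explicit constant $(1+\gamma^2)^{1/2}C$.
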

\begin{proof}
(1). can be proved by classical cone criterion in partially hyperbolic dynamical systems, for example see \cite{CP}.

For (2). we firstly apply Lemma \ref{lemma: cone E tilde } and (1). to any $j$ and any $a\in \mathrm{PH}(\tilde{\al})$ then we get a family of $D\tilde{\al}(a)-$invariant $d_j-$dimensional subspaces $\tilde{E}_j$ which is uniformly transverse to $TN$. Therefore by \eqref{eqn: poly dev exp growth}, we can find $C(a,i)$ such that for any $v\in \tilde{E}_j, \|v\|=1$,
\begin{equation}\nonumber
C(a,i)^{-1}e^{\chi_i(a)}< \|D\tilde{\al}(a)\cdot v\|< C(a,i)\|a\|^{L}e^{\chi_i(a)}
\end{equation}

Therefore for any $a\in \mathrm{PH}(\tilde{\al})$ which does \textbf{not} stay in any $\ker(\chi_j)\cap \ker(\chi_k)$ (by Lemma \ref{lemma: op con PH} it is possible to choose such $a$), $\tilde{E}_j$ is a Lyapunov subspace of $\tilde{\al}(a)$ in Oseledec splitting with respect to any $\tilde\al-$invariant measure $\tilde{\mu}$. Pick any $b\in \ZZ^k$, by commutavity we know $D\tilde\al(a)$ has $\chi_i(a)$ as Lyapunov exponent on $D\tilde\al(b)\tilde{E}_j$ with respect to any $\tilde\al-$invariant measure $\tilde{\mu}$. Therefore we have $D\tilde\al(b)\tilde{E}_j\subset\tilde{E}_j$ which implies $D\tilde\al(b)\tilde{E}_j=\tilde{E}_j$ for any $b,j$. So $\oplus_j \tilde{E}_j\oplus TN$ is an $\tilde{\al}-$invariant splitting. Then by transversality and \eqref{eqn: poly dev exp growth} we can find a $C'>0$ such that \eqref{eqn joint ply dev exp gro} holds.

For (3). Consider the uniformly smooth foliation $W_i\times N$ in $M\times N$. Pick any $a\in \mathrm{PH}(\tilde{\alpha})$ with $\chi_i(a)>0$, $\tilde{\al}(a)$ is \textbf{partially hyperbolic} within $W_i\times N$ (with respect to the splitting $\tilde{E}_i\oplus TN$) in the sense that there exists $k>0$ such that for any $(x,y)\in W_i\times N$, any choice of unit vectors $v \in \tilde{E}_i(x,y)$, $u\in E_i(x)\oplus T_yN$,
\[
1<\|D\tilde{\al}(a)^{k}(v)\|,~~
\|D\tilde{\al}(a)^{k}(u)\| < \|D\tilde{\al}(a)^{k}(v)\|.
\]
By the same proof of smoothness for strong (un)stable foliations  of partially hyperbolic systems, for example cf.\cite{HPS} or \cite{CP},  $\tilde{E}_i$ is tangent to an $\tilde{\al}(a)-$invariant foliation $\tilde{W}_i$ with uniformly $C^{s}-$leaves (since in the assumption of Theorem \ref{thm: main}, $\tilde{\al}(a)$ itself is $C^{s}$). By (2). we know $\tilde{W}_i$ is $\tilde{\al}-$invariant.
\end{proof}

As a corollary, for any $a\in \mathrm{PH}$, $\tilde{\al}(a)$ is a partially hyperbolic system with respect to the splitting $$\tilde{E}^s_a:=\{0\},~~ E^c:=TN\oplus\oplus_{\chi_i(a)<0} \tilde{E}_i,~~ \tilde{E}^u_a:=\oplus_{\chi_i(a)>0} \tilde{E}_i$$since by (2) of Lemma \ref{coro: smooth leaves til al} the action of $D\tilde{\al}(a)$ restricted on $\tilde{E}^{u}_a$ has the same growth speed (up to a constant) as that of $D\al(a)$ on $E^{u}_a$. So $\tilde{E}^{u}_a$ is integrable and tangent $\tilde{W}^{u}_a$ to the unstable foliation of $\tilde{\al}(a)$. By theory of partially hyperbolic systems, $\tilde{W}^{u}_a$ has uniformly $C^{s}-$leaves.

\subsection{Regularity of $\tilde{W}^i$}
In this subsection we prove that under $r-$bunching condition, $\tilde{E}_i$ is a $C^{r+}-$distribution in $M\times N$. 
Firstly we consider the following $C^r-$section theorem in \cite{HPS}.
\begin{lemma}\label{lemma: cr section}Suppose $f$ is a $C^r, r\geq 1$ diffeomorphism of a compact smooth
manifold $M$, and $W$ is an $f-$invariant topological foliation with uniformly $C^r$
leaves. Let $B$ be a normed vector bundle over $M$ and $F : B\to B$ be a linear
extension of $f$ such that both $B$ and $F$ are uniformly $C^r$ along $W$. Suppose that $F$ contracts fibers of $B$, i.e. for any $x \in M$ and any $v\in B_x$,
\begin{equation}
\|F \cdot v\|_{f(x)} ≤ k_x
\|v\|_x,~~ \sup_{x\in M}k_x< 1.
\end{equation}
Then there exists a unique continuous $F-$invariant section of $B$. Moreover, if 
$$\sup k_x \al^r_x < 1$$ where $\al_x := \|df |_{TW(x)}^{−1}\|$
then the unique invariant section is uniformly $C^r$  along the leaves of $W$.
\end{lemma}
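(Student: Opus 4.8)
The plan is to prove the statement by the graph-transform and fiber-contraction method of Hirsch--Pugh--Shub \cite{HPS}. For the existence and uniqueness of the continuous invariant section, let $\Gamma^0(B)$ be the complete metric space of continuous sections of $B$ with the uniform metric $d(\sigma,\tau)=\sup_{x\in M}\|\sigma(x)-\tau(x)\|_x$, and define the \emph{graph transform} $\mathcal G\colon\Gamma^0(B)\to\Gamma^0(B)$ by $(\mathcal G\sigma)(x)=F\bigl(\sigma(f^{-1}x)\bigr)$; a section is $F$-invariant precisely when $\mathcal G\sigma=\sigma$. Since $F$ is fiberwise linear and $\|Fv\|_{f(x)}\le k_x\|v\|_x$, one gets $\|(\mathcal G\sigma)(x)-(\mathcal G\tau)(x)\|_x\le k_{f^{-1}x}\,\|\sigma(f^{-1}x)-\tau(f^{-1}x)\|_{f^{-1}x}$, so $\mathcal G$ is a contraction with constant $\sup_x k_x<1$, and the Banach fixed point theorem yields the unique continuous $F$-invariant section $\sigma^\ast$.

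For the regularity statement I would work with leafwise jet bundles. Since $W$ has uniformly $C^r$ leaves and $B,F$ are uniformly $C^r$ along $W$, the leafwise tangent bundle $TW$ is defined and a leafwise $C^1$ section $\sigma$ has a leafwise derivative $D^W\sigma$, a section of $B^{(1)}:=\mathrm{Hom}(TW,B)$. Differentiating $(\mathcal G\sigma)\circ f=F\circ\sigma$ along the leaves shows $\mathcal G$ preserves leafwise $C^1$ sections and $D^W(\mathcal G\sigma)=\Psi_1(D^W\sigma)$, where $\Psi_1$ is an affine bundle map over $f$ on $B^{(1)}$ with linear part $\phi\mapsto DF\circ\phi\circ(Df|_{TW})^{-1}$ and with a uniformly bounded inhomogeneous term built from the leafwise derivative of $F$. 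Its linear part contracts the fibers of $B^{(1)}$ by at most $k_x\alpha_x$, so under $\sup_x k_x\alpha_x<1$ (the case $r=1$) the Fiber Contraction Theorem produces a unique continuous $\Psi_1$-invariant section $\tau_1$ over $\sigma^\ast$. Iterating $\mathcal G$ from a smooth section $\sigma_0$ gives $\mathcal G^n\sigma_0\to\sigma^\ast$ and $D^W(\mathcal G^n\sigma_0)=\Psi_1^{\,n}(D^W\sigma_0)\to\tau_1$ uniformly; the standard $C^1$-convergence criterion, applied on each uniformly $C^r$ leaf, identifies $\tau_1$ with $D^W\sigma^\ast$, so $\sigma^\ast$ is leafwise $C^1$ with uniformly continuous leafwise derivative.

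To reach $C^{[r]}$ along $W$ one iterates this on the higher leafwise jet bundles $B^{(j)}$, $1\le j\le[r]$: differentiating $(\mathcal G\sigma)\circ f=F\circ\sigma$ $j$ times along $W$, the top-order term expresses the $j$-th leafwise derivative of $\mathcal G\sigma$ through an affine bundle map over $f$ whose linear part contracts the fibers of $B^{(j)}$ by at most $k_x\alpha_x^{j}$, the lower-order terms involving only derivatives of $\sigma$ of order $<j$ already controlled by induction; and $\sup_x k_x\alpha_x^{j}\le\max\{\sup_x k_x,\ \sup_x k_x\alpha_x^{[r]}\}<1$ because $\sup_x k_x\alpha_x^{r}<1$. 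So the Fiber Contraction Theorem and the $C^1$-convergence argument apply at each stage, giving by induction that $\sigma^\ast$ is uniformly $C^{[r]}$ along $W$. When $r\notin\ZZ$, a final step promotes $(D^W)^{[r]}\sigma^\ast$ to leafwise Hölder exponent $\theta=r-[r]$: it is the fixed point of an affine contraction of the Banach space of sections of $B^{([r])}$ over $\sigma^\ast$ with bounded leafwise $\theta$-Hölder norm, with contraction rate $\sup_x k_x\alpha_x^{[r]+\theta}=\sup_x k_x\alpha_x^{r}<1$, since composition with $(Df|_{TW})^{-1}$ multiplies the leafwise $\theta$-Hölder seminorm by $\alpha_x^{\theta}$. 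Hence $\sigma^\ast$ is uniformly $C^r$ along $W$, and uniqueness was already established.

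The main obstacle I expect is the bookkeeping that makes the Fiber Contraction Theorem genuinely applicable on the bundles $B^{(j)}$: one must check that the lifted maps $\Psi_j$ are continuous (using uniform $C^r$-dependence of $F$ and of local trivializations of $B$ along $W$) and that the inhomogeneous terms, assembled from leafwise derivatives of $F$ up to order $j$, are uniformly bounded, so that each $\Psi_j$ is a genuine fiberwise affine contraction; and then that the abstract invariant sections are indeed the successive leafwise derivatives of $\sigma^\ast$, which requires the $C^1$-convergence criterion to hold with constants uniform over the leaf space of $W$. The Hölder step is the most delicate, as one must control the leafwise $\theta$-Hölder seminorm under simultaneous composition with $(Df|_{TW})^{-1}$ and with the only-$C^{[r]}$-with-$\theta$-Hölder-top-derivative map $F$.
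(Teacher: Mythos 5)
The paper does not prove this lemma; it is quoted verbatim (up to notational adjustments) from Hirsch--Pugh--Shub \cite{HPS} as the $C^r$ section theorem and used as a black box. Your sketch correctly reconstructs the standard HPS argument---Banach fixed point for existence and uniqueness, fiber contraction on the leafwise jet bundles $B^{(j)}$ with contraction rate $k_x\alpha_x^j$, an interpolation observation to cover $1\le j\le [r]$, and the H\"older-seminorm step for the fractional part---so there is nothing to compare it against in the paper beyond the citation.
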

The main idea to prove regularity of $\tilde{E}^i$ is to apply the following corollary of $C^r-$section theorem to different invariant bundles.

\begin{coro}\label{coro: coro cr section}Let $f$ be a $C^{r+1}$ diffeomorphism of a compact smooth manifold $M$. Let $W$ be an $f-$invariant topological foliation with uniformly $C^{r}-$leaves and  $\|Df |^{-1}_{TW(x)}\|:= \al_x$ for all $x\in M$. Let $E^1$ and $E^2$ be continuous f-invariant distributions on $M$ such that the distribution $E = E^1\oplus E^2$ is uniformly $C^r$ along $W$ and $E^1\oplus E^2$ is a dominated splitting in the sense that for any $x \in M$,
$$k_x:=\frac{\max_{v\in {E}^2(x), \|v\|=1}\|Df(v)\|}{\min_{v\in {E}^1(x), \|v\|=1}\|Df(v)\|}<1$$
If $\sup_{x\in M}k_x\al_x^r<1$.
Then $E^1$ is uniformly $C^r$ along the leaves of $W$. In particular if $\al_x\leq 1$ for any $x\in M$ then $E^1$ is uniformly $C^r$ along $W$.
\end{coro}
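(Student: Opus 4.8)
The plan is to realize $E^1$ as the unique invariant section of a fiber-contracting extension of $f$ carried by a bundle that is $C^r$ along $W$, and then to apply the $C^r$ section theorem, Lemma~\ref{lemma: cr section}. Write $d_1=\dim E^1$. Since $E=E^1\oplus E^2$ is uniformly $C^r$ along $W$ over the compact manifold $M$, the Grassmann bundle $\mathrm{Gr}_{d_1}(E)\to M$ is a fiber bundle that is $C^r$ along $W$ by functoriality; concretely, in $W$-foliated charts one fixes $C^r$-along-$W$ subbundles $V,Z\subset E$ with $\dim V=d_1$, $\dim Z=\dim E^2$, $V\oplus Z=E$, and $V$ uniformly $C^0$-close to $E^1$, and works in the vector bundle $B\to M$ with fiber $B_x=L(V(x),Z(x))$, identifying a section $\phi$ of $B$ with the $d_1$-dimensional subbundle $G_\phi:=\{v+\phi_x v:v\in V(x)\}\subset E$. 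Because $V$ is close to $E^1$, the subbundle $E^1$ is transverse to $Z$, so there is a unique continuous section $\phi^\ast$ of $B$ with $G_{\phi^\ast}=E^1$.

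Next I would define the graph transform and record its regularity. For $\phi$ in a uniform neighbourhood of $\phi^\ast$, the subspace $Df\!\left(G_\phi(x)\right)\subset E(f(x))$ is again transverse to $Z(f(x))$ --- this holds at $\phi^\ast$ since $Df(E^1)=E^1$, and transversality is an open condition --- so it equals $G_{(\mathcal T\phi)(f(x))}$ for a unique $(\mathcal T\phi)(f(x))\in B_{f(x)}$. This defines a bundle map $\mathcal T$ over $f$, defined near the $\phi^\ast$-section, with $\mathcal T(\phi^\ast)=\phi^\ast$. Since $f\in C^{r+1}$ the derivative $Df$ is $C^r$, hence $Df|_E$ is uniformly $C^r$ along $W$; together with the fact that $V$ and $Z$ are uniformly $C^r$ along $W$, this makes $\mathcal T$ uniformly $C^r$ along $W$.

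Then I would compute the fiber-contraction rate of $\mathcal T$ at $\phi^\ast$. Pass to the (merely continuous) fiberwise-linear coordinates on $B$ in which $V(x),Z(x)$ are replaced by the invariant subbundles $E^1(x),E^2(x)$ and $\phi^\ast$ by the zero section. In these coordinates $Df|_{E(x)}$ is block-diagonal with respect to $E^1\oplus E^2$ --- precisely because $E^1$ and $E^2$ are \emph{separately} $Df$-invariant --- so the graph transform becomes the genuinely linear map sending $\eta\in L(E^1(x),E^2(x))$ to $\left(Df|_{E^2(x)}\right)\circ\eta\circ\left(Df|_{E^1(x)}\right)^{-1}\in L(E^1(f(x)),E^2(f(x)))$. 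Its fiber norm over $x$ is therefore at most
\[
\frac{\max_{v\in E^2(x),\,\|v\|=1}\|Df(v)\|}{\min_{v\in E^1(x),\,\|v\|=1}\|Df(v)\|}=k_x,
\]
and, $k_x$ being continuous and $<1$ on the compact $M$, we get $\sup_x k_x<1$; thus $\mathcal T$ is a fiber contraction near $\phi^\ast$ with rate $k_x$.

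Finally I would invoke the section theorem. With $\alpha_x=\|Df|_{TW(x)}^{-1}\|$, the hypothesis $\sup_x k_x\alpha_x^r<1$ is exactly the bunching inequality of Lemma~\ref{lemma: cr section}; applying that result (in the $C^r$-along-$W$ coordinates on $B$ the map $\mathcal T$ is a nonlinear fiber contraction, so one uses the general fiber-bundle form of the $C^r$ section theorem of \cite{HPS}, which reduces to Lemma~\ref{lemma: cr section} in the linear case) yields a unique $\mathcal T$-invariant section that is uniformly $C^r$ along $W$; by uniqueness this section is $\phi^\ast$, so $E^1=G_{\phi^\ast}$ is uniformly $C^r$ along $W$. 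The ``in particular'' clause is then immediate, since $\alpha_x\le1$ forces $k_x\alpha_x^r\le k_x$ and hence $\sup_x k_x\alpha_x^r\le\sup_x k_x<1$. I expect the main obstacle to be the bookkeeping linking the two coordinate systems: the sharp rate $k_x$ is read off in the only-continuous invariant coordinates, whereas the $C^r$ estimates of the section theorem are taken in the $C^r$-along-$W$ coordinates, and one must ensure that it is $k_x$ itself, rather than $k_x$ inflated by a distortion factor from the change of coordinates, that enters $\sup_x k_x\alpha_x^r<1$. This is handled as in the proof of Lemma~\ref{lemma: cr section}, by separating the contraction estimate from the H\"older/$C^r$ accounting and, if need be, passing to a high iterate of $f$, for which the comparison of the two coordinate systems is arbitrarily close to the identity.
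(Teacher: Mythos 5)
Your approach is essentially the paper's: realize $E^1$ as the graph of the invariant section of the bundle of linear maps between a pair of $C^r$-along-$W$ approximating subbundles, and invoke the $C^r$ section theorem (Lemma \ref{lemma: cr section}) under $\sup_x k_x\alpha_x^r<1$. The paper dispatches the bookkeeping issue you worry about at the end more directly: it picks $\bar E^1,\bar E^2$ (your $V,Z$) close enough to $E^1,E^2$ that the \emph{approximate} splitting already satisfies $\sup_x\tilde k_x<1$ and $\sup_x\tilde k_x\alpha_x^r<1$, and then works entirely in those $C^r$-along-$W$ coordinates, so no comparison between the two coordinate systems or passage to iterates is required. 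One small correction to your closing remark: passing to a high iterate of $f$ does not make ``the comparison of the two coordinate systems arbitrarily close to the identity'' --- the two coordinate systems are fixed once and for all and do not move under iteration. What the high-iterate trick actually buys is that the fixed, bounded distortion constant relating them is dominated by $\bigl(\sup_x k_x\alpha_x^r\bigr)^n\to 0$; either that observation, or the paper's choice of $V,Z$ sufficiently close to $E^1,E^2$, closes the gap.
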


\begin{proof}(cf.\cite{KS07}) Since $E$ is $C^r$ along $W$, then we can approximate $E^1$ and $E^2$ by $\bar{E}^1$ and $\bar{E}^2$  respectively such that $\bar{E}^{1,2}$ are subbundles of $E$ and $C^r$ along $W$ and $\bar{E}^1\oplus \bar{E}^2$ is still a dominated splitting of $E$ under $df$. Moreover we can assume $$\tilde k_x:=\frac{\max_{v\in \bar{E}^2(x), \|v\|=1}\|Df(v)\|}{\min_{v\in \bar{E}^1(x), \|v\|=1}\|Df(v)\|}$$ satisfying $$\sup_{x\in M}\tilde{k}_x<1, ~~\sup_{x\in M}\tilde{k}_x\al_x^r<1$$

Define the vector bundle $B$ over $M$ where the fiber is defind by $B_x:=\{L:\bar{E}^1_x\to \bar{E}^2_x, L \text{ is linear}\}$. Then $Df$ induces a bundle map $F$ on $B$. Since $\bar{E}^1\oplus \bar{E}^2$ is a dominated splitting, $F$ contracts the fiber of $B$. In fact, by calculation we know for any $x\in B_x$, $\|F\cdot v\|_{f(x)}\leq \tilde{k}_x\|v\|_x$. Then by Lemma \ref{lemma: cr section}, there exists a unique continuous $F-$invariant section. By uniqueness, the distribution $E^1$ should be the graphs of this section. Since $\sup_{x\in M}\tilde{k}_x\cdot \al_x^r<1$. By Lemma \ref{lemma: cr section} we know $E^1$ is uniformly $C^r$ along $W$.
\end{proof}
\subsubsection{Regularity along coarse Lyapunov foliations}
Recall that for the action $\al$, $E_i$ is the Lyapunov distribution associated to the Lyapunov functional $\chi_i$, and the coarse Lyapunov distribution for $\al$ is defined as $$E^i:=E^{\chi_i}=\oplus_{\chi=c\chi_i, c>0}E_\chi=\cap_{a\in \ZZ^k-\cup_j\ker(\chi_j), \chi_i(a)>0}E^u_a.$$ 
For the action $\tilde{\al}$ we can define the coarse Lyapunov distribution similarly: for $\chi_i$, $\tilde{E}^i:=\tilde{E}^{\chi_i}=\oplus_{\chi=c\chi_i, c>0}\tilde{E}_\chi$. Since for any Weyl chamber, there is an element in $\mathrm{PH}(\tilde{\al})$, therefore $$\tilde{E}^i=\cap_{a\in \mathrm{PH}(\tilde{\al}), \chi_i(a)>0}\tilde{E}^u_a.$$
where $\tilde{E}^u, \tilde{W}^u$ are defined in the end of section \ref{subsection: exist ei wi}. Then $\tilde{E}^i$ is integrable and tangent to the coarse Lyapunov foliation $\tilde{W}^i:=\cap_{a\in \mathrm{PH}(\tilde{\al}), \chi_i(a)>0}\tilde{W}^u_a$ with uniformly $C^{s}-$leaves.

In the following proposition we prove the regulartiy of $\tilde{E}_i$ along each coarse Lyapunov foliation. Our approach generalizes of the arguments in \cite{KS07} to partially hyperbolic actions. Notice here that the quasiconformality assumptions in \cite{KS07} are not used in our proof.
\begin{prop}\label{prop: ej reg}For any $j,k$ such that $\tilde{E}_j\cap \tilde{E}^k=\{0\}$, $\tilde{E}_j$ is uniformly $C^{r+}$ along $\tilde{W}^{k}$.
\end{prop}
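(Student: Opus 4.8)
The plan is to apply the $C^r$ section theorem, in the form of Corollary \ref{coro: coro cr section}, to a single well-chosen element $f=\tilde\al(m)$ of the extended action, acting along the foliation $\tilde W^k$, with $\tilde E_j$ as the dominating block of a dominated splitting. The decisive structural point is that, although $\tilde E_j$ is merely continuous, it sits inside a \emph{globally smooth} distribution on $M\times N$: by construction $\tilde E_j\subset\bar E_j\oplus TN=E_j\oplus TN$, and $E_j$ is a smooth distribution on the infranilmanifold $M$ (Section \ref{section: rig tns ano}) while $TN$ is smooth on $N$, so $E:=E_j\oplus TN$ is a $C^\infty$ distribution on $M\times N$, hence uniformly $C^{r+\epsilon}$ along $\tilde W^k$ for every small $\epsilon>0$ (recall $\tilde W^k$ is $\tilde\al$-invariant with uniformly $C^s$ leaves and $s>r+1$). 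Moreover, by Corollary \ref{coro: smooth leaves til al}(1), $\tilde E_j$ is uniformly transverse to $TN$ and $d\pi(\tilde E_j)=E_j$ has dimension $d_j$, so a dimension count gives $E=\tilde E_j\oplus TN$ as a direct sum of the two continuous, $\tilde\al$-invariant subbundles $\tilde E_j$ and $TN$.

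Next I would choose $m$ so that $TN$ is strictly dominated by $\tilde E_j$ under $D\tilde\al(m)$ and so that $\tilde\al(m)$ expands the leaves of $\tilde W^k$. Since $\tilde E_j\cap\tilde E^k=\{0\}$, the functional $\chi_j$ is not positively proportional to $\chi_k$, and since $\al$ is TNS it is not negatively proportional either; hence $\chi_j$ and $\chi_k$ are linearly independent, the open cone $\{\chi_j>0\}\cap\{\chi_k>0\}$ contains a full Weyl chamber, and that chamber contains an element $a_0\in\mathrm{PH}(\tilde\al)$. Iterating the bunching inequality exactly as in the proof of Lemma \ref{lemma: op con PH}: by \eqref{eqn: est beta iter} applied to $\chi_j$ (legitimate since $\chi_j(a_0)>0$) we get $\|D\beta(pk_0a_0,x)\|<e^{-p\lambda}C_0p^Lk_0^L\|a_0\|^Le^{pk_0\chi_j(a_0)}$, while by \eqref{eqn joint ply dev exp gro} every unit vector in $\tilde E_j$ is expanded by $D\tilde\al(pk_0a_0)$ by at least $(C')^{-1}e^{pk_0\chi_j(a_0)}$; the quotient of these two bounds tends to $0$ as $p\to\infty$, so for $m:=pk_0a_0$ with $p$ large one has $\sup_x k_x<1$ for the domination ratio $k_x=\max_{v\in TN,\|v\|=1}\|D\tilde\al(m)v\|\big/\min_{v\in\tilde E_j,\|v\|=1}\|D\tilde\al(m)v\|$. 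The high iterate is needed precisely to absorb the polynomial factor $\|m\|^L$ in \eqref{eqn joint ply dev exp gro}. Finally, because $\chi_k(a_0)>0$ we have $\tilde E^k\subset\tilde E^u_{a_0}$, so $\tilde W^k$ refines the unstable foliation $\tilde W^u_{a_0}$ and, enlarging $p$ if necessary, $\|D\tilde\al(m)|_{T\tilde W^k}^{-1}\|<1$ uniformly.

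Now I would invoke Corollary \ref{coro: coro cr section} with $f=\tilde\al(m)$, the foliation $W=\tilde W^k$ (with uniformly $C^s$, hence $C^{r+\epsilon}$, leaves), and the dominated splitting $E^1=\tilde E_j$, $E^2=TN$ inside the $C^{r+\epsilon}$ bundle $E=E_j\oplus TN$: choosing $\epsilon$ with $0<\epsilon<s-r-1$ makes $f$ a $C^{(r+\epsilon)+1}$ diffeomorphism, and since $\|D\tilde\al(m)|_{T\tilde W^k}^{-1}\|\le 1$ the hypothesis of the corollary is met through its ``in particular'' clause. The conclusion is that $\tilde E_j$ is uniformly $C^{r+\epsilon}$ along $\tilde W^k$ for every small $\epsilon>0$, i.e. uniformly $C^{r+}$ along $\tilde W^k$. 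I expect the main obstacle to be the two items in the middle paragraph: locating a Weyl chamber on which both $\chi_j$ and $\chi_k$ are positive (where the TNS hypothesis, the hypothesis $\tilde E_j\cap\tilde E^k=\{0\}$, and the presence of an element of $\mathrm{PH}(\tilde\al)$ in every chamber all enter), and then passing to a high enough power of $a_0$ so that the exponential gain $e^{-p\lambda}$ coming from its $0$-bunching beats the polynomial drift in the growth estimate for $\tilde E_j$. Once these are arranged, the rest is routine bookkeeping on top of the observation that $\tilde E_j$ lies inside the smooth bundle $E_j\oplus TN$ and there admits the $\tilde\al$-invariant complement $TN$.
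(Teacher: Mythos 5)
Your proposal is correct, and it takes a genuinely different and noticeably simpler route than the paper's. The paper's proof adapts the Kalinin--Sadovskaya scheme from the Anosov case: it picks a $2$-dimensional slice $P\subset\RR^k$ in general position (here the resonance-free hypothesis enters), orders the Lyapunov halfplanes on $P$, realizes $\tilde E^u_a=\oplus_{1\le m\le i}\tilde E_{j_m}$ for a well-chosen $a\in\mathrm{PH}$ as the ambient bundle smooth along $\tilde W^1$, and then peels off the Lyapunov summands one by one by applying Corollary~\ref{coro: coro cr section} repeatedly to elements $b',b''$ chosen close to $\pm b\in\ker\chi_1\cap P$, followed by an intersection argument. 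Your proof instead exploits the fact that $\tilde E_j$ has a smooth $\tilde\al$-invariant complement, namely $TN$, inside the globally smooth bundle $E_j\oplus TN$; you take a single element $a_0\in\mathrm{PH}$ in a Weyl chamber where $\chi_j>0$ and $\chi_k>0$ (legitimate because TNS together with $\tilde E_j\cap\tilde E^k=\{0\}$ forces $\chi_j,\chi_k$ non-proportional, hence the cone is open, contains a regular element, hence a full chamber, hence a PH element), pass to a high power $m=pk_0a_0$ so that the exponential gain $e^{-p\lambda}$ from $0$-bunching absorbs the polynomial factors exactly as in Lemma~\ref{lemma: op con PH}, and apply Corollary~\ref{coro: coro cr section} once via its ``in particular'' clause since $\alpha_x<1$ along $\tilde W^k$. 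Your domination estimate is sound: the bunching controls $\|D\beta\|$ against the minimal expansion on $E^u_{a_0}$, hence against the minimal expansion on the sub-bundle $\tilde E_j$ by \eqref{eqn joint ply dev exp gro}. Notably, your argument does not use the resonance-free hypothesis for this proposition at all, and it is structurally the same move the paper makes in Proposition~\ref{prop: tilde ej good along n} (splitting $\tilde E_j\oplus TN$ inside $E_j\oplus TN$) but with $W=\tilde W^k$ instead of $W=N$. What the paper's longer route buys is the sharper conclusion that $\tilde E_j$ is uniformly $C^{s-1}$ (a fixed exponent) along $\tilde W^1$, whereas your route gives uniformly $C^{r'}$ for every $r'<s-1$, i.e.\ $C^{(s-1)-}$; both exceed the required $C^{r+}$, so either suffices.
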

\begin{proof}

Consider $\tilde{E}^1=\tilde{E}_{c_1\chi_1}\oplus \tilde E_{c_2\chi_1}\oplus\cdots \oplus \tilde E_{c_l\chi_1}, ~~0<c_l<\cdots<c_1$. We take a $2-$dimensional subspace $P\subset \RR^k$ in general position such that $P$ intersects each Lyapunov hyperplane along distinct lines. In addition, since $\al$ is resonance free, $P$ can be chosen such that for any $b\in \ker \chi_1\cap P-\{0\}, \chi_i(b)\neq \chi_j(b)$ for any $(\chi_i, \chi_j)$ where $\chi_i$ is not proportional to $\chi_j$. For any $\chi_i$ we denote by $\mathcal{H}_i$ the half spaces in $\RR^k$ such that $\chi_i<0$. And $H_i:=\mathcal{H}_i\cap P$.

We now order these halfplanes counterclockwisely such that $H_1$ is the half space corresponding to $\tilde{E}^1$. Then by TNS condition there exists a unique $i > 1$ such that $$\cap_{1\leq j\leq i}H_j\cap\cap_{j'>i}-H_{j'}\neq\emptyset$$

By our setting of bunching elements, $\mathrm{PH}\cap_{1\leq j\leq i}-\mathcal{H}_j\cap\cap_{j'>i}\mathcal{H}_{j'}\neq \emptyset$ and for any element $a\in \mathrm{PH}\cap_{1\leq j\leq i}-\mathcal{H}_j\cap\cap_{j'>i}\mathcal{H}_{j'}$, by our assumption of $i$, $$\oplus_{1\leq j\leq i} \tilde{E}_j=\tilde{E}_{c_1\chi_1}\oplus \tilde E_{c_2\chi_1}\oplus\cdots \oplus \tilde E_{c_l\chi_1}\oplus \tilde E_{l+1}\oplus \cdots \oplus\tilde E_i=\tilde{E}^u_a$$ Then  $\oplus_{1\leq j\leq i} \tilde{E}_j$ is uniformly $C^{r+}$ along $\tilde{W}^u_a$  and in particular along $\tilde{W}^1$.

We choose a unit vector $b\in \ker\chi_1\cap P$ such that $b\in H_j$ for any $l+1\leq j\leq i$. By our choice of $P$ we know for any $j',j''\geq l+1$, $\chi_{j'}(b)\neq \chi_{j''}(b)$. Therefore we could reorder the indices $1,\dots, i$ by $j_i, j_{i-1},\cdots, j_1$ such that $$\chi_{j_i}(b)<\cdots <\chi_{j_{l+1}}(b)<\chi_{j_l}(b)=\cdots=\chi_{j_1}(b)=0, ~~\text{ where } \chi_{j_s}=c_{s}\chi_1, 1\leq s\leq l$$
As a result if we choose $b'\in \ZZ^k\cap -\mathcal{H}_1$ such that $\frac{b'}{\|b'\|}$ close to $b$ enough, then we have 
\begin{equation}\label{eqn: distinct chi ji}
\chi_{j_i}(b')<\cdots <\chi_{j_{l+1}}(b')<0<\chi_{j_l}(b')<\cdots<\chi_{j_1}(b')
\end{equation}

We consider an arbitrary $m$ such that $l+1\leq m< i$ and apply Corollary \ref{coro: coro cr section} to $f=\tilde{\al}(b'), W=\tilde{W}^1, E^1=\oplus_{s=1}^m \tilde{E}_{j_s}, E^2=\oplus_{s=m+1}^i\tilde{E}_{j_s}$. Notice that by \eqref{eqn joint ply dev exp gro} we have
\begin{eqnarray*}
\|D\tilde{\al}(b')|^{-1}_{TW(x)}\|=\|D\tilde{\al}(b')|^{-1}_{E^{\chi_1}(x)}\|&\leq& O(e^{-\chi_{j_l}(b')})<1 \text{ when $\|b'\|$ is large}\\
\|D\tilde{\al}(b')(v)\|&\geq& O(e^{\chi_{j_m}(b')})\text{ for any  unit vector $v\in E_1$.}\\
\|D\tilde{\al}(b')(v)\|&\leq& O(\|b'\|^L\cdot e^{\chi_{j_{m+1}}(b')})\text{ for any unit vector $v\in E_2$.}
\end{eqnarray*}
Then take $b'$ such that $\|b'\|$ large enough and \eqref{eqn: distinct chi ji} holds, by Corollary \ref{coro: coro cr section} (if necessary we could replace $b'$ by $nb'$ for $n$ large) we know $E^1=\oplus_{s=1}^m \tilde{E}_{j_s}$ is uniformly $C^{s-1}$ along the leaves of $\tilde{W}^1$ for any $m$ such that $l+1\leq m<i$.

Similarly if we take $b''\in \ZZ^k\cap -\mathcal{H}_1$ such that $\frac{b''}{\|b''\|}$ sufficiently close to $-b$, then we have, 
\begin{equation}\label{eqn: b'' distinct chi ji}
\chi_{j_i}(b'')>\cdots >\chi_{j_{l+1}}(b'')>\chi_{j_l}(b'')>\cdots>\chi_{j_1}(b'')>0
\end{equation}
We apply Corollary \ref{coro: coro cr section} to  $f=\tilde{\al}(b''), W=\tilde{W}^1, E^1=\oplus_{s=m+1}^i \tilde{E}_{j_s}, E^2=\oplus_{s=l+1}^m\tilde{E}_{j_s}$ for some $m$ such that $l+1\leq m\leq i$ then by \eqref{eqn joint ply dev exp gro}, we have
\begin{eqnarray*}
\|D\tilde{\al}(b'')|^{-1}_{TW(x)}\|=\|D\tilde{\al}(b'')|^{-1}_{E^{\chi_1}(x)}\|&\leq& O(e^{-\chi_{j_1}(b')})<1 \text{ when $\|b''\|$ is large}\\
\|D\tilde{\al}(b'')(v)\|&\geq& O(e^{\chi_{j_{m+1}}(b'')})\text{ for any unit vector $v\in E_1$.}\\
\|D\tilde{\al}(b')(v)\|&\leq& O(\|b''\|^L\cdot e^{\chi_{j_{m}}(b'')})\text{ for any unit vector $v\in E_2$.}
\end{eqnarray*}
If necessary we replace $b''$ by $nb''$ for $n$ large, we get $E^1\oplus E^2$ is a dominated splitting and $df$ is non-contracting on $W$. Then by Corollary \ref{coro: coro cr section}, $E^1=\oplus_{s=m+1}^i\tilde{E}_{j_s}$ is uniformly $C^{s-1}$ along the leaves of $\tilde{W}^1$ for any $m$ such that $l+1\leq m<i$. Therefore by taking intersection, $\tilde{E}_m$ is uniformly $C^{s-1}$ along $\tilde{W}^1$ for any $m$ such that $l+1\leq m\leq i$.

Considering the halfplanes $\{-H_l\}$, mimick the proof above we get for any $j>i$, $\tilde{E}_j$ is uniformly $C^{s-1}$ along $\tilde{W}^1$. The same proof holds for any $\tilde{W}^k$. By $s-1>r$ we get the proof.
\end{proof}
\subsubsection{Regularity along $N$}
In the following proposition we prove the regularity of $\tilde E_i$ along $N$. Notice that here is the only place where we use \eqref{eqn: bunching on center}.

\begin{prop}\label{prop: tilde ej good along n}
For any $j$, $\tilde{E}_j$ is uniformly $C^{r+}$ along $N$.
\end{prop}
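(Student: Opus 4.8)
The plan is to apply the $C^r$-section theorem (Lemma \ref{lemma: cr section}) with the fibered manifold being $M\times N$ and the role of the invariant foliation $W$ played by the fiber foliation $\{x\}\times N$, whose leaves are $C^\infty$ (in particular uniformly $C^s$), and which is $\tilde\al$-invariant. Fix a Weyl chamber containing $\chi_j$ and pick $a\in\mathrm{PH}$ with $\chi_j(a)>0$; I would use the full strength of the $r$-bunching inequality \eqref{eqn: bunching on center}, namely $\sup_x\|D_x\al(ka)|_{E^u_a}^{-1}\|\cdot\|D\beta(ka,x)\|\cdot\|D\beta(ka,x)^{-1}\|^r<1$, which I have not yet used. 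The diffeomorphism to feed into the section theorem is $f=\tilde\al(ka)$ restricted to the center-unstable object $W_j\times N$ (or an iterate thereof, after enlarging $k$). Along the fibers $\{x\}\times N$ the derivative of $f$ is exactly $D\beta(ka,x)$, so $\al_x$ in the notation of Lemma \ref{lemma: cr section} equals $\|D\beta(ka,x)^{-1}\|$.

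The key step is to realize $\tilde E_j$ as a graph over $\bar E_j=E_j\oplus\{0\}$ inside $\bar E_j\oplus TN$ and identify the relevant contracted bundle. By Corollary \ref{coro: smooth leaves til al}(1), $\tilde E_j$ is uniformly transverse to $TN$ and has $d_j$-dimensional projection $E_j$, so over $W_j\times N$ it is the graph of a continuous bundle section $L\colon \bar E_j\to TN$; equivalently, using the dominated splitting $\tilde E_j\oplus TN$ on $W_j\times N$ provided by Lemma \ref{lemma: cone E tilde}, this is the setup of Corollary \ref{coro: coro cr section} with $E^1=\tilde E_j$, $E^2=TN$ and $W$ the fiber foliation. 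The domination ratio is $k_x=\dfrac{\max_{\|v\|=1,\,v\in TN}\|D\tilde\al(ka)v\|}{\min_{\|v\|=1,\,v\in\tilde E_j}\|D\tilde\al(ka)v\|}$; by Corollary \ref{coro: smooth leaves til al}(2) the denominator is comparable to $e^{k\chi_j(a)}$ up to polynomial factors and, more to the point, to $\|D\al(ka)|_{E^u_a}^{-1}\|^{-1}$, while the numerator is bounded by $\|D\beta(ka,x)\|$. Hence $k_x\lesssim \|D\al(ka)|_{E^u_a}^{-1}\|\cdot\|D\beta(ka,x)\|$ and $k_x\,\al_x^{\,r}\lesssim \|D\al(ka)|_{E^u_a}^{-1}\|\cdot\|D\beta(ka,x)\|\cdot\|D\beta(ka,x)^{-1}\|^{r}$, which is $<1$ by \eqref{eqn: bunching on center} (after passing to a large enough iterate so that the polynomial-in-$\|a\|$ prefactor from \eqref{eqn: poly dev exp growth} is absorbed). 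The fiber bundle $E$ and the map $F$ are smooth along the fibers $\{x\}\times N$ because $\beta(ka,\cdot)(\cdot)$ is $C^s$ as a map $M\times N\to N$ and along a single fiber this is just the $C^s$ diffeomorphism $\beta(ka,x)$; thus Lemma \ref{lemma: cr section} applies and yields a unique $F$-invariant section that is uniformly $C^{r+}$ along the fibers. By uniqueness this invariant section is $\tilde E_j$ (since $\tilde E_j$ restricted to a fiber is $D\tilde\al(ka)$-invariant), giving uniform $C^{r+}$ regularity of $\tilde E_j$ along $N$.

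I would finish by noting that the argument above, carried out for one $a$ per Weyl chamber, covers every index $j$, since for each $j$ there is a Weyl chamber with $\chi_j>0$ in which some bunching element lives (Lemma \ref{lemma: op con PH}), and $\tilde E_j$ does not depend on which element we use by Corollary \ref{coro: smooth leaves til al}(2). The main obstacle I anticipate is the bookkeeping in the estimate $k_x\al_x^r<1$: one must be careful that the quantities controlling $D\tilde\al(ka)$ on $\tilde E_j$ and on $TN$ are compared \emph{pointwise in $x$} (the section theorem needs $\sup_x k_x\al_x^r<1$, not an averaged statement), and that the transversality constant from Corollary \ref{coro: smooth leaves til al}(1) together with the polynomial deviation $\|a\|^L$ in \eqref{eqn: poly dev exp growth} is genuinely absorbed by choosing $k$ large — this is where the subadditivity/robustness lemmas of the previous subsections do the real work.
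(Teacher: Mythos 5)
Your proposal is correct and follows essentially the same route as the paper: pick $a\in\mathrm{PH}$ in a Weyl chamber with $\chi_j(a)>0$, apply Corollary~\ref{coro: coro cr section} with $f=\tilde\al(nka)$, $W$ the fiber foliation by $\{x\}\times N$, $E^1=\tilde E_j$, $E^2=TN$, and verify $\sup k_x\al_x^{\,r}<1$ from the $r$-bunching inequalities \eqref{eqn: bunching condition}--\eqref{eqn: bunching on center} via the transversality bound of Corollary~\ref{coro: smooth leaves til al}. The only cosmetic difference is that the paper controls the denominator of $k_x$ through $\|D\al(nka)|_{E_j}^{-1}\|$ (after transferring to $\tilde E_j$ by the uniform-transversality constant $C''$) whereas you compare directly to $\|D\al(ka)|_{E^u_a}^{-1}\|$, which is the quantity appearing in the bunching hypothesis and gives a possibly weaker but still sufficient bound since $E_j\subset E^u_a$.
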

\begin{proof}Notice that if $\beta$ is $r-$bunched then $\beta$ is automatically $(r+)-$bunched, therefore we only need to prove $\tilde{E}_j$ is uniformly $C^{r}$ along $N$. Since for any $j$, $\tilde{E}_j$ uniformly transverserse to $TN$, there exists $C''>1$ such that for any $j$, for any $b\in \ZZ^k$ and $(x,y)\in M\times N$  we have 
\begin{equation}\label{eqn: al control tilde}
C''^{-1}\|D_x\al(b)|_{{E}_j}^{-1}\|\leq  \|D_{(x,y)}\tilde\al(b)|_{\tilde{E}_j}^{-1}\|\leq C'' \|D_x\al(b)|_{{E}_j}^{-1}\|
\end{equation}

Since $\beta$ is a $r-$bunched cocycle, by definition for any Weyl chamber there is an element $a$ such that $\beta$ is $r-$bunched over $\al(a)$. Therefore we can choose $a$ such that there exists $k>0$, $\lambda\in(0,1)$ such that 
\begin{eqnarray}\label{eqn: al ka cont beta}
\sup_{x\in M}\|D_x\al(ka)|_{{E}_j}^{-1}\|\cdot \|D\beta(ka,x)\| &<&\lambda<1\\ \label{eqn: al ka r dominate beta}
\sup_{x\in M}\|D_x\al(ka)|_{{E}_j}^{-1}\|\cdot\| D\beta(ka,x) \|\cdot  \|D\beta(ka,x)^{-1}\|^r &<&\lambda<1
\end{eqnarray}
Therefore for $n$ large enough we have
\begin{eqnarray}\label{eqn e tilde j dominate TN}
&&\sup_{(x,y)\in M\times N}\|D_{(x,y)}\tilde\al(nka)|_{\tilde{E}_j}^{-1}\|\cdot \|D_y\beta(nka,x)\| \\ \nonumber
&\leq & C''\|D_x\al(nka)|_{{E}_j}^{-1}\| \cdot \|D\beta(nka,x)\| \quad\text{ by \eqref{eqn: al control tilde}}\\ \nonumber
&\leq & C''\lambda^n \quad \text{by \eqref{eqn: al ka cont beta} and subadditivity}\\ \nonumber
&<&1 \text{ for $n$ large}
\end{eqnarray}
Similar by \eqref{eqn: al control tilde}, \eqref{eqn: al ka r dominate beta} and subadditivity we have for $n$ large 
\begin{eqnarray}\label{eqn e tilde j r dom TN}
\sup_{(x,y)\in M\times N}\|D_{(x,y)}\tilde\al(nka)|_{{E}_j}^{-1}\|\cdot\| D_y\beta(nka,x) \|\cdot  \|D_y\beta(nka,x)^{-1}\|^r <1
\end{eqnarray}

Now we apply Corollary \ref{coro: coro cr section} to $f=\tilde{\al}(nka), W=N, E^1=\tilde{E}_j, E^2=\tilde{TN}$.  By \eqref{eqn e tilde j dominate TN} we have for any $(x,y)\in M\times N$, $k_(x,y)$ in Corollary \ref{coro: coro cr section} satisfies
\begin{eqnarray}\label{eqn contr k(x,y)}
\sup_{(x,y\in M\times N)}k_{(x,y)}&=&\sup_{(x,y)\in M\times N}\frac{\max_{v\in {E}^2(x,y), \|v\|=1}\|Df(v)\|}{\min_{v\in {E}^1(x,y), \|v\|=1}\|Df(v)\|}\\ \nonumber 
&=&\sup_{(x,y)\in M\times N}\|D_{(x,y)}\tilde\al(nka)|_{\tilde{E}_j}^{-1}\|\cdot \|D_y\beta(nka,x)\| \\ \nonumber
&<&1 \quad \text{ by \eqref{eqn e tilde j dominate TN}}
\end{eqnarray}
And for $(x,y)\in M\times N$, $\al_{(x,y)}:=\|Df |^{-1}_{TW(x)}\|=\|D_y\beta(nka,x)^{-1}\|$. Combine with \eqref{eqn contr k(x,y)}, \eqref{eqn e tilde j r dom TN} we have 
\begin{equation}\label{eqn k(x,y)al(x,y)r<1}
\sup_{(x,y)}k_{(x,y)}\al_{(x,y)}^r=\sup_{(x,y)}\|D_{(x,y)}\tilde\al(nka)|_{{E}_j}^{-1}\|\cdot\| D_y\beta(nka,x) \|\cdot  \|D_y\beta(nka,x)^{-1}\|^r <1
\end{equation}
By \eqref{eqn contr k(x,y)},\eqref{eqn k(x,y)al(x,y)r<1} and Corollary \ref{coro: coro cr section}, we get the proof of  Proposition \ref{prop: tilde ej good along n}.\end{proof}

Now we prove the coarse Lyapunov distribution $$\tilde{E}^j=\tilde{E}^{
\chi_j}=\oplus_{\chi=c\chi_j}E_\chi $$ is a $C^{r+}-$distribution of $M\times N$. The basic strategy is to apply Lemma \ref{lemma Journe} inductively. Our proof can be viewed as a partially hyperbolic version of arguments in \cite{GS},\cite{KL}. Notice that $\tilde{E}^j$ is uniformly $C^{s-1}$ (hence $C^{r+}$) along $\tilde W^j$. Then by Proposition \ref{prop: ej reg}, \ref{prop: tilde ej good along n} and transversality of distributions $\tilde{E}_\chi$ we know $\tilde E^j$ is uniformly $C^{r+}$ along any $\tilde W^k$ and $N$. As in the proof of Proposition \ref{prop: ej reg}, we consider a plane $P\subset \RR^k$ in general position such that $P$ intersects different Lyapunov hyperplanes along distinct lines. And for each \textbf{coarse} Lyapunov distribution $E^\chi$ we consider the half places $\mathcal{T}^\chi$ in $\RR^k$ such that $\chi>0$ on $\mathcal{T}^\chi$. $T^\chi:=\mathcal{T}^\chi\cap P$.

We take an arbitrary $T^\chi$ and denote by $T_1$. And then we order these halfplanes (hence the associated halfspaces) counterclockwisely. By TNS condition there exists a unique $i$ such that
$$\cap_{1\leq j\leq i}T_j\cap\cap_{j'>i}-T_{j'}\neq\emptyset$$
Therefore by our definition of $r-$bunched cocycle, we can take elements $a, a'$ satisfying $$a\in \mathrm{PH}\cap \cap_{1\leq j\leq i}\mathcal{T}_j\cap\cap_{j'>i}-\mathcal{T}_{j'}, \quad a'\in \mathrm{PH}\cap \cap_{1\leq j\leq i}-\mathcal{T}_j\cap\cap_{j'>i}\mathcal{T}_{j'}$$ then $\tilde{E}^u_a=\oplus_{1\leq j\leq i} \tilde{E}^j, \tilde{E}^u_{a'}=\oplus_{j>i}\tilde{E}^j$. By the theory of partially hyperbolic systems $\tilde{E}^u_a, \tilde{E}^u_{a'}$ are integrable and tangent to foliations $\tilde{W}^u_{a},\tilde{W}^u_{a'}$ respectively. As in the discussion at the end of section \ref{subsection: bunching}, $\tilde{W}^u_{a},\tilde{W}^u_{a'}$  have uniformly $C^{s}-$leaves. 
\begin{lemma}\label{lemma: unstabel filtr}
\begin{enumerate}\item For any $k$ ($1\leq k\leq i$), $\mathcal{E}_{k}:=\oplus_{j=k}^i\tilde{E}^j$ is integrable and tangent to a continuous foliation $\mathcal{L}_k$ with uniformly $C^{s}$ leaves. 
\item $\tilde{E}^j$ is uniformly $C^{r+}$ along $\mathcal{L}_k$ for any $k\leq i$. In particular $\tilde{E}^j$ is uniformly $C^{r+}$ along $\mathcal{L}_1=\tilde{W}^u_a$. 
\end{enumerate} 
\end{lemma}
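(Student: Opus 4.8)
The strategy is a downward induction on $k$, peeling off one coarse Lyapunov distribution at a time, starting from $\mathcal{E}_i = \tilde{E}^i$ and $\mathcal{L}_i = \tilde{W}^i$ (which we already know is integrable with uniformly $C^s$ leaves, and along which $\tilde{E}^j$ is $C^{s-1}$, hence $C^{r+}$). For the inductive step, suppose $\mathcal{E}_{k+1}$ is tangent to a continuous foliation $\mathcal{L}_{k+1}$ with uniformly $C^s$ leaves, and $\tilde{E}^j$ is uniformly $C^{r+}$ along $\mathcal{L}_{k+1}$. We want to conclude the same for $\mathcal{E}_k = \tilde{E}^k \oplus \mathcal{E}_{k+1}$. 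The key observation is that by the choice of ordering of the halfplanes $T_1,\dots,T_i$ and the TNS condition, one can find a bunching element $a_k \in \mathrm{PH}(\tilde{\al})$ whose unstable distribution is exactly $\mathcal{E}_k$ (or whose unstable distribution contains $\mathcal{E}_k$ as a subfiltration), so that $\mathcal{E}_k$ is the unstable distribution $\tilde{E}^u_{a_k}$ of a partially hyperbolic element; by the theory of partially hyperbolic systems this gives integrability and uniformly $C^s$ leaves for $\mathcal{L}_k := \tilde{W}^u_{a_k}$. (Concretely: pick $a_k$ close in direction to a vector $b_k \in P$ on which all $\chi$ with $\chi > 0$ on $T_1,\dots,T_{k-1}$ have been made negative, while keeping the rest of the sign pattern.)

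For the regularity of $\tilde{E}^j$ along $\mathcal{L}_k$, I would use Journé's Lemma (Lemma \ref{lemma Journe}) exactly as the text advertises: inside each leaf of $\mathcal{L}_k$ we have two transverse subfoliations, namely the restriction of $\tilde{W}^k$ (the leaves of $\tilde{W}^k$, which lie inside $\mathcal{L}_k$-leaves since $\tilde{E}^k \subset \mathcal{E}_k$) and the restriction of $\mathcal{L}_{k+1}$. Both have uniformly $C^{s-}$ leaves. By Proposition \ref{prop: ej reg} the distribution $\tilde{E}^j$ is uniformly $C^{r+}$ along $\tilde{W}^k$ (coarse Lyapunov foliations), and by the inductive hypothesis it is uniformly $C^{r+}$ along $\mathcal{L}_{k+1}$; since $\tilde{W}^k$ and $\mathcal{L}_{k+1}$ are uniformly transverse within $\mathcal{L}_k$ and together span $T\mathcal{L}_k = \mathcal{E}_k$, Journé's Lemma applied leafwise inside $\mathcal{L}_k$ yields that $\tilde{E}^j$ is uniformly $C^{r+}$ along $\mathcal{L}_k$. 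Feeding this back completes the induction; the base case $k=1$ gives $\mathcal{L}_1 = \tilde{W}^u_a$ as claimed.

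The main obstacle I anticipate is purely bookkeeping: making the combinatorics of the halfplane ordering genuinely produce, at each stage $k$, a bunching element $a_k$ whose unstable bundle is \emph{precisely} $\mathcal{E}_k = \oplus_{j=k}^i \tilde{E}^j$ — this uses TNS (to guarantee the relevant intersection of halfspaces is nonempty and consists of a single sign-pattern region), Lemma \ref{lemma: op con PH} (robustness of $\mathrm{PH}$, so that we may perturb the direction into a rational one lying in $\mathrm{PH}(\tilde{\al})$), and the fact that for every Weyl chamber there is an element of $\mathrm{PH}$. A secondary technical point is verifying that $\tilde{W}^k$, when restricted to a leaf of $\mathcal{L}_k$, is still a foliation uniformly transverse to $\mathcal{L}_{k+1}$ with uniformly $C^{s-}$ leaves, and that applying Journé leafwise is uniform in the leaf; this follows from the uniform transversality of the $\tilde{E}_\chi$ established in Corollary \ref{coro: smooth leaves til al}(1) and the uniform $C^s$ regularity of all the relevant invariant foliations, but it needs to be stated carefully. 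Everything else is a direct appeal to the results already assembled in this section.
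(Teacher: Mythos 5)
Your overall strategy — downward induction from $\mathcal{L}_i=\tilde{W}^i$ with a leafwise application of Journ\'e's Lemma at each step — is exactly the paper's, and your handling of part (2) matches the paper modulo a harmless shift of index (you decompose $\mathcal{L}_k$ into $\tilde{W}^k$ and $\mathcal{L}_{k+1}$; the paper decomposes $\mathcal{L}_{k-1}$ into $\tilde{W}^{k-1}$ and $\mathcal{L}_{k}$).

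There is, however, a real gap in your treatment of part (1). You plan to produce $a_k\in\mathrm{PH}(\tilde{\al})$ with $\tilde{E}^u_{a_k}=\mathcal{E}_k$ \emph{exactly} and then set $\mathcal{L}_k:=\tilde{W}^u_{a_k}$. The halfspace conditions you can reliably arrange for $a_k$ (the paper uses $a_k\in \mathcal{T}_k\cap -\mathcal{T}_{k-1}\cap\mathcal{T}_i$) force, via the two-dimensional nesting $T_1\cap T_i\subsetneq\cdots\subsetneq T_{i-1}\cap T_i\subsetneq T_i$, the signs $\chi_j(a_k)>0$ for $j\in\{k,\dots,i\}$ and $\chi_j(a_k)<0$ for $j\in\{1,\dots,k-1\}$; but they say nothing about $\chi_j(a_k)$ for $j>i$. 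Rotating away from $a$'s chamber in the plane $P$ may well flip some of those ``outside'' exponents to positive, so in general $\tilde{E}^u_{a_k}\supsetneq\mathcal{E}_k$ and $\tilde{W}^u_{a_k}$ has the wrong dimension. Your parenthetical hedge (``or whose unstable distribution contains $\mathcal{E}_k$ as a subfiltration'') acknowledges this possibility but is not carried through to the definition of $\mathcal{L}_k$, which you still take to be $\tilde{W}^u_{a_k}$ itself. The paper sidesteps the combinatorial question entirely by setting $\mathcal{L}_k:=\tilde{W}^u_{a_k}\cap\tilde{W}^u_a$, the intersection of the unstable foliations of the two commuting partially hyperbolic elements $\tilde{\al}(a_k)$ and $\tilde{\al}(a)$; its tangent bundle is $\tilde{E}^u_{a_k}\cap\tilde{E}^u_a=\mathcal{E}_k$ regardless of the uncontrolled signs, and it inherits uniformly $C^{s}$ leaves from the theory of partially hyperbolic systems. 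You should replace your $\mathcal{L}_k$ with this intersection; once you do, the rest of your argument goes through unchanged and coincides with the paper's proof.
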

\begin{proof}The proof of Lemma \ref{lemma: unstabel filtr} is similar to that in \cite{GS}, \cite{KL}.  For completeness we give the details here. 

For (1). by our choice of $i$ and the positions of $T_1,\dots, T_i$ we know $$\emptyset \subsetneq T_1\cap T_i\subsetneq T_2\cap T_i\subsetneq\cdots\subsetneq T_{i-1}\cap T_i\subsetneq T_i$$
In particular, $\mathcal{T}_k\cap -\mathcal{T}_{k-1}\cap \mathcal{T}_i\neq \emptyset$ $(\mathcal{T}_0=\RR^k)$ and we can choose an $a_k\in \mathrm{PH}\cap \mathcal{T}_k\cap -\mathcal{T}_{k-1}\cap \mathcal{T}_i$. By definition of $\mathcal{T}_k$ we have $\tilde{E}^u_{a_k}\cap \tilde{E}^u_a=\mathcal{E}_k$ which is tangent to a continuous foliation $\mathcal{L}_k:=\tilde{W}^u_{a_k}\cap \tilde{W}^u_a$ with uniformly $C^{s}-$leaves.

For (2), since $\mathcal{L}_i=\tilde{W}^i$, by Proposition \ref{prop: ej reg} $\tilde{E}^j$ is uniformly $C^{r+}$ along $\mathcal{L}_i$. Notice that $\tilde{W}^i$, $\tilde{W}^{i-1}$ are two uniformly transverse continuous foliation with uniformly $C^{s}-$leaves in $\mathcal{L}_{i-1}$.  Then by Proposition \ref{prop: ej reg} and Lemma \ref{lemma Journe}, $\tilde{E}^j$ is uniformly $C^{r+}$ along $\mathcal{L}_{i-1}$. 

Notice that any $k\leq i$, $\tilde{W}^{k-1}$, $\mathcal{L}_k$ are two uniformly transverse continuous foliation with uniformly $C^{s}-$leaves in $\mathcal{L}_{k-1}$. Applying Lemma \ref{lemma Journe} repeatedly, by induction we can prove that $\tilde{E}^j$ is uniformly $C^{r+}$ along $\mathcal{L}_k$ for any $k\leq i$\end{proof}
Similarly we get $\tilde{E}^j$ is uniformly $C^{r+}$ along $\tilde{W}^u_{a'}$ for any $j$. Notice that $\tilde{E}^u_a\oplus TN=\oplus_{1\leq j\leq i} \tilde{E}_j\oplus TN=\oplus_{1\leq j\leq i} {E}_j\oplus TN$ is integrable and tangent to the foliation $W^u_a\times N$ which is uniformly smooth. And $N, \tilde{W}^u_a$ are two uniformly transverse foliations within $W^u_a\times N$ and have uniformly $C^{s}-$leaves. Therefore using Lemma \ref{lemma Journe}, by (2). of Lemma \ref{lemma: unstabel filtr} and Proposition \ref{prop: tilde ej good along n} we know $\tilde{E}^j$ is uniformly $C^{r+}$ along $W^u_a\times N$. Since $\tilde{E}^j$ is also uniformly $C^{r+}$ along $\tilde{W}^u_{a'}$, and $W^u_a\times N,\tilde{W}^u_{a'} $ are two uniformly transverse foliations with uniformly $C^{s}$ leaves, apply Lemma \ref{lemma Journe} again we know $\tilde{E}^j$ is $C^{r+}$ on $M\times N$. Then $\tilde{E}_j=(E_j\oplus TN)\cap \tilde{E}^j$, as an intersection of two $C^{r+}-$distribution in $M\times N$, is a $C^{r+}-$distribution as well. Then by Frobenious Theorem  (see the discussion in section \ref{subsection: Frob thm}), $\tilde{W}_i$ is a $C^{r+}$-foliation of $M\times N$.

\subsection{The proof of Proposition \ref{prop: main prop}}\label{subsection: prf main prop}
In the previous section we proved for any $i$ the coarse Lyapunov distribution $\tilde{E}^i=\tilde{E}^{\chi_i}=\oplus_{\chi=c\chi_i,c>0}\tilde{E}_\chi$ is $C^{r+}$ on $M\times N$. By Frobenius theorem, to prove integrablity of $\oplus\tilde{E}_i=\oplus \tilde{E}^i$, we only need to prove that Lie bracket within $\oplus \tilde{E}^i$ is closed.

Suppose $X, Y$ are two $C^1$ vector fields contained in $\oplus \tilde{E}^i$ and $X=\sum_i X^i, Y=\sum_i Y^i$ are the decomposition of $X,Y$ with respect to the splitting $\oplus \tilde{E}^i$. Then $$[X,Y]=\sum_i[X^i,Y^i]+\sum_{j\neq k}[X^j,Y^k]$$
By integrability of $\tilde{E}^i$, $[X^i,Y^i]$ is contained in $E^i$ for each $i$. For $[X^j,Y^k], j\neq k$, by TNS condition there is a regular element $a\in \ZZ^k$ such that $\chi_j(a)>0, \chi_k(a)>0$. By our assumption of bunching condition there exists $b\in \mathrm{PH}$ in the same Weyl Chamber as $a$. Then by definition of $\tilde{E}^u_b$ in the end of section \ref{subsection: exist ei wi} we have $\tilde{E}^j,\tilde{E}^k\subset \tilde{E}^u_b\subset \oplus \tilde{E}^i$. Therefore by integrability of $\tilde{E}^u_b$ ($\tilde{E}^u_b=T\tilde{W}^u_b$), $[X^j,Y^k]$ is contained in $\tilde{E}^u_b$, hence in $\oplus \tilde{E}^i$. In summary,  $\oplus \tilde{E}^i$ is involutive, by Frobenius theorem $\oplus \tilde{E}^i$ is tangent to an $\tilde{\al}-$invariant $C^{r+}$ foliation $\mathcal{W}_H$.

By our assumption of bunching condition we can  choose elements $a_0, a_1\in \mathrm{PH}$ such that $-a_1$ is in the same Weyl chamber as $a_0$. Then $TM=E^u_{a_0}\oplus E^u_{a_1} $ and $W^u_{a_0}$ and $W^u_{a_1}$ are two transverse foliations of $M$ with uniformly smooth leaves, where ${E}^u_a$ and $W^u_a$ are the unstable distribution and unstable foliation of $\al(a)$ on $M$ for any regular $a\in\ZZ^k$. Since $\oplus \tilde{E}^i$ is uniformly transverse to $TN$ in $M\times N$, therefore each local leaf of $\mathcal{W}_H$ is a graph of a map $\varphi: U\subset M\to N$. Since $\mathcal{W}^u_{a_i}$ have uniformly $C^{s}-$leaves, notice that the graph of $\varphi|_{W^u_{a_i}},i=0,1$ are $\tilde{W}^u_{a_i},i=0,1$, therefore $\varphi$ is uniformly $C^s$ along $W^u_{a_i},i=0,1$. By Lemma \ref{lemma Journe} $\varphi$ is uniformly $C^{s-}$ on $U$. Therefore $\mathcal{W}_H$ has uniformly $C^{s-}$ leaves,  which implies Proposition \ref{prop: main prop}. 

\section{Proof of the main results}\label{mainproofs}
Recall that $\al$ is a TNS, resonance free $\ZZ^k$ action formed by affine automorphisms on an infranilmanifold $M$ (see chapter \ref{section: rig tns ano}). And $N$ is a smooth compact manifold. In the rest of this chapter we study the cocycle $\beta:\ZZ^k\times M\to \Diff^s(N)$ under different regularity and bunching assumptions.
\subsection{Horizontal foliation and the proof for (1). of Theorem \ref{thm: main}}\label{subsection: proof of 1.main}
In this section we assume $\beta$ is $r-$bunched and $r\geq 1, s>r+1$. We take an arbitrary point $x_0\in M$. Consider the universal covering space $(p, \hat{M}, \hat{x_0})$ of $(M, x_0)$. Then $\al$ can be lifted as an action $\hat{\al}:\ZZ^k\times \hat{M}\to \hat{M}$. And we get a lifted cocycle (as in section \ref{subsedction: def ess coho}): $$\hat{\beta}(\cdot, \cdot):=\beta(\cdot, p(\cdot)): \ZZ^k\times \hat{M}\to \Diff^{s}(N)$$ 

We claim that $\hat\beta$ is $C^{r+}-$cohomologous to a constant cocycle, which implies (1). of Theorem \ref{thm: main}. The map $p$ induces a covering map: $$\mathrm{Pr}: \hat{M}
\times N\to M\times N,~~ \mathrm{Pr}(x,y)=(p(x),y) $$
We denote by $\hat{\pi}$ the canonical projection from $\hat{M}\times N$ to $M$. Then $\tilde{\al}$ can be lifted to an action $\hat{\tilde\al}$, $$\hat{\tilde\al}: \ZZ^k\times \hat{M}
\times N\to \hat{M}
\times N, \hat{\tilde\al}(a)(x,y)=(\hat\al(a)x, \beta(a, p(x))\cdot y)$$

The $\tilde\al-$invariant foliation $\mathcal{W}_H$ defined in section \ref{subsection: prf main prop} can be lifted as an $\hat{\tilde\al}-$invariant uniformly $C^{r+}-$foliation $\hat{\mathcal{W}}_H$ of $\hat{M}\times N$.  Moreover $\hat{\mathcal{W}}_H$ is \textit{horizontal} in the sense that $\hat{\mathcal{W}}_H$ is uniformly transverse to the fiber $N$ of the fiber bundle $\hat{\pi}: M\times N\to M$ (cf. \cite{NTnonabelian1}).

By theory of suspension  in foliation theory (cf. pp. 124, section 1.2 of \cite{HH} or \cite{NTnonabelian1}), we have that the foliation $\hat{\mathcal{W}}_H$ is a uniformly $C^{r+}$ global section of the fiber bundle $\hat{M}\times N$ in the sense that each leaf of $\hat{\mathcal{W}}_H$ intersects each fiber $N$ at exactly one point.

As a corollary, we can define a $C^{r+}$ map $h$ which is induced by the holonomy of $\hat{\mathcal{W}}_H$ in $\hat{M}\times N$: 
$$h: \hat{M}\to \Diff^{r+}(N), ~~ h(\hat x)\cdot y:=\pi_N(\hat{\mathcal{W}}_H(\hat{x_0}, y)\cap N_{\hat x})$$where $\pi_N$ is the canonical projection to $N$. Since $\hat{\mathcal{W}}_H$ is a global section, $h$ is well-defined. Moreover by $\hat{\tilde\al}-$invariance of $\hat{\mathcal{W}}_H$, for any $a\in \ZZ^k$, $\hat{x}\in \hat{M}$, we have $$h(\hat{{\al}}(a)\cdot \hat x)^{-1}\circ \hat\beta(a,\hat{x})\circ h(\hat x)=h(\al(a)\cdot \hat{x_0})^{-1}\cdot \hat{\beta}(a,\hat{x_0})$$which does not depend on $\hat{x}$. Therefore $\hat{\beta}$ is $C^{r+}-$cohomologous to a constant cocycle.
\subsection{Proof for (2). of Theorem \ref{thm: main}.}\label{subsection: proof 2. main}
In this section we assume and $\beta$ is a center-bunched $C^s-$cocycle ($s\notin \ZZ$ and $s>2$) over $\al$. Our plan is firstly to prove Proposition \ref{prop: former 3.2} below  and then deduce (2). of Theorem \ref{thm: main} in section \ref{subsection: 3.2 from 6.1}.
\begin{prop}\label{prop: former 3.2}There is a finite cover $M^\ast$ of $M$ only depends on $\al$ such that if $\beta$ is trivial at a fixed point of $\al$, then $\beta$ lifts to a $C^{[s]-}-$coboundary on the cover.
\end{prop}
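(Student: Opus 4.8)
The plan is to start from the conclusion of Section \ref{subsection: proof of 1.main}: we already know that on the universal cover $\hat M$, the lifted cocycle $\hat\beta$ is $C^{r+}$-cohomologous (in fact, since $\beta$ is center-bunched, $C^{[s]-}$-cohomologous) to a constant cocycle $\beta_0:\ZZ^k\to\Diff^{[s]-}(N)$ via the holonomy transfer map $h:\hat M\to\Diff^{[s]-}(N)$. The issue is purely one of descent: $h$ is defined on $\hat M$ and need not be $\pi_1(M)$-equivariant, so $\hat\beta=h(\hat\al(a)\cdot\hat x)^{-1}\beta_0(a)h(\hat x)$ does not immediately descend to $M$. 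First I would invoke Remark \ref{rema: comm fix pnt}: there is a finite-index subgroup $A\subset\ZZ^k$ with a common fixed point $x_0\in M$ for $\al|_A$, and $A$ (hence the corresponding finite cover $M^\ast\to M$ associated to the subgroup of deck transformations fixing a lift, or rather the cover on which $\al|_A$ lifts with a fixed point) depends only on $\al$. After replacing $M$ by $M^\ast$ and the action by its restriction to $A$, we may assume $\al$ itself has a fixed point $x_0$ with $\beta(a,x_0)=\mathrm{id}$ for all $a$ in a generating set, hence — using the cocycle identity and that $\al(a)x_0=x_0$ — for all $a\in A$.

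Next, with the base point $\hat x_0$ chosen over $x_0$, the constant cocycle produced by the horizontal-foliation holonomy is $\beta_0(a)=h(\hat\al(a)\cdot\hat x_0)^{-1}\hat\beta(a,\hat x_0)h(\hat x_0)$; since $\hat\al(a)\cdot\hat x_0=\hat x_0$ (the lift of the fixed point is fixed) and $\hat\beta(a,\hat x_0)=\beta(a,x_0)=\mathrm{id}$, we get $\beta_0(a)=h(\hat x_0)^{-1}h(\hat x_0)=\mathrm{id}$. Thus on $\hat M$ the cocycle $\hat\beta$ is already a $C^{[s]-}$-coboundary: $\hat\beta(a,\hat x)=h(\hat\al(a)\cdot\hat x)^{-1}h(\hat x)$. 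What remains is to check that the transfer map $h$ itself descends to $M^\ast$, i.e. that $h(\gamma\cdot\hat x)=h(\hat x)$ for every deck transformation $\gamma\in\pi_1(M^\ast)$. Here the key point is the dynamical characterization of $h$: because $\mathcal{W}_H$ on $M^\ast\times N$ is a genuine (downstairs) $\tilde\al$-invariant horizontal foliation which is a global section of the bundle $M^\ast\times N\to M^\ast$, the map $h$ can be reconstructed intrinsically on $M^\ast$ by following the horizontal holonomy along any path and using invariance under a transitive (or at least dense-orbit) Anosov element to pin down the value independently of the path — equivalently, $\hat{\mathcal W}_H$ being the lift of a foliation on $M^\ast\times N$ forces the holonomy representation to be trivial on $\pi_1(M^\ast)$. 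I would make this precise by noting that the graph $\{(\hat x,h(\hat x))\}$ is a single leaf of $\hat{\mathcal W}_H$ through $(\hat x_0,\mathrm{id})$, and since $\mathcal W_H$ downstairs is a global section, this leaf is the full preimage of the corresponding leaf downstairs, which is itself a graph over $M^\ast$; hence $h$ is $\pi_1(M^\ast)$-invariant and descends to $h^\ast:M^\ast\to\Diff^{[s]-}(N)$ with $\beta(a,x)=h^\ast(\al(a)x)^{-1}h^\ast(x)$.

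The main obstacle I anticipate is precisely this descent step — showing that the horizontal foliation $\mathcal W_H$, which Proposition \ref{prop: main prop} constructs as an honest foliation of the \emph{compact} manifold $M^\ast\times N$, is in fact a global section of the fibration $M^\ast\times N\to M^\ast$ (not merely that its lift is a section of $\hat M\times N\to\hat M$). Upstairs the section property followed from suspension theory because $\hat M$ is simply connected; downstairs one must rule out that the leaves of $\mathcal W_H$ wind around and hit a fiber $N_x$ more than once. This is where the fixed point and (essential) minimality/ergodicity of the Anosov base dynamics enters: at the fixed fiber $N_{x_0}$, invariance under $\tilde\al(a_0)$ for a transitive Anosov $a_0$ forces the leaf through $(x_0,y)$ to meet $N_{x_0}$ in an $\al(a_0)$-orbit-closure-invariant set inside $\{x_0\}\times N$, which combined with the global-section property upstairs and properness of the covering $\hat M\times N\to M^\ast\times N$ forces exactly one intersection point per fiber. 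Once the global-section property downstairs is in hand, the equivariance of $h$ and the identification of $\hat\beta$ with a coboundary of a function factoring through $p$ are essentially formal, and one reads off that $\beta$ lifts (via $M^\ast$) to a $C^{[s]-}$-coboundary, with the regularity $[s]-$ coming from the uniformly $C^{[s]-}$-leaves of $\mathcal W_H$ established in Proposition \ref{prop: main prop} together with Lemma \ref{lemma Journe}.
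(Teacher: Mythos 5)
Your proposal correctly identifies the two components (construct the transfer map on $\hat M$; then descend to a finite cover), and your observation that triviality of $\beta$ at the fixed point forces $\beta_0=\mathrm{id}$ is exactly what the paper uses. However, there are two genuine gaps.

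\textbf{Gap 1: regularity.} You assert that center-bunching already gives a $C^{[s]-}$ transfer map from part (1) of Theorem \ref{thm: main}. It does not: center-bunching is $1$-bunching, so part (1) yields only a $C^{1+}$ transfer map $h$. The parenthetical ``coming from the uniformly $C^{[s]-}$-leaves of $\mathcal{W}_H$ together with Lemma \ref{lemma Journe}'' does not close this: a foliation with $C^{[s]-}$ leaves is not automatically a $C^{[s]-}$ foliation, so the holonomy $h^\ast$ is not automatically $C^{[s]-}$. The missing ingredient is the parametrized Liv\v{s}ic theorem (Proposition \ref{prop: DLLAW}, from de la Llave--Windsor) applied to the coboundary equation $\beta^\ast(a,x)=h^\ast(\al^\ast(a)x)\circ h^\ast(x)^{-1}$ for a single transitive Anosov element: it upgrades $h^\ast$ from a H\"older map to $\Diff^1(N)$ to a H\"older map to $\Diff^{[s]}(N)$, and only then does Lemma \ref{lemma:holo reg implies foliation reg} (holonomy regularity $\Rightarrow$ foliation regularity) give that $\mathcal{W}^\ast_H$, hence $h^\ast$, is $C^{[s]-}$.

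\textbf{Gap 2: the descent / finiteness of the cover.} Your argument for why $\mathcal{W}_H$ downstairs must be a global section is circular (it invokes the conclusion, ``since $\mathcal{W}_H$ downstairs is a global section''), and the fallback via ``minimality/ergodicity at the fixed fiber'' cannot work: since $\beta(a,x_0)=\mathrm{id}$ for $a\in A$, the map $\tilde\al(a)$ restricted to $N_{x_0}$ is the identity, so \emph{every} subset of $N_{x_0}$ is invariant and no constraint is produced. In fact $\mathcal{W}_H$ on $M\times N$ is in general \emph{not} a global section (the paper explicitly cites examples with compact Lie group targets where passing to a finite cover is necessary), so no dynamical argument can make the monodromy vanish on all of $\pi_1(M,x_0)$. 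What is needed, and what the paper actually proves, is an algebraic statement: the monodromy homomorphism $H:\pi_1(M,x_0)\to\Diff(N)$ given by $H(\omega)=h(\hat x)^{-1}h(\omega\hat x)$ is well defined (Lemma \ref{lemma: repre H def}), satisfies $H(\al(a)_\ast\omega)=H(\omega)$, and hence factors through the quotient of $\pi_1(M,x_0)$ by the coinvariant subgroup $\mathrm{span}\{\al(a)_\ast\omega\cdot\omega^{-1}\}$; and, crucially, Lemma \ref{lemma: finite image H}(2) shows this subgroup has finite index. That last step uses that for an Anosov automorphism of a nilmanifold, the induced map on each free-abelian layer $\Lambda_i/\Lambda_{i-1}$ of the upper central series of the fundamental group has no eigenvalue $1$ (Manning--Mal'cev), so $(\bar\al(a_0)-\mathrm{id})$ has finite cokernel layer by layer. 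The cover $M^\ast$ is the one corresponding to this finite-index coinvariant subgroup, not (as you suggest) a cover built from the finite-index subgroup $A\subset\ZZ^k$ of the \emph{acting} group. Without this finiteness argument, there is no cover on which the holonomy is well defined and the proposition has not been established.
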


Notice that $\beta$ satisfies all conditions  in (1). of Theorem \ref{thm: main} for the case $s=s, r=1$. In particular without loss of generality we assume the base point $x_0$ of $M$ in section \ref{subsection: proof of 1.main} is the fixed point for $\al$. Therefore all the results and concepts in Chapter \ref{PHextension} and section \ref{subsection: proof of 1.main} could be applied to $\beta$, for example $\hat{M}\times N, \hat{\mathcal{W}}_H, \mathcal{W}_H, h, E_i, \tilde{E}_i $, etc.


\subsubsection{Construction of a finite cover}\label{subsection: M ast def}
We plan to construct a finite cover $M^\ast$ for $M$ which satisfies conditions in Proposition \ref{prop: former 3.2}. For any $\omega\in \pi_1(M, x_0)$, we consider the desk transformation  induced by $\omega$ on $\hat{M}$, $\hat{x}\in \hat{M}\mapsto \omega\cdot \hat{x}$. The following lemma is a basic property for the lift of horizontal foliations.
\begin{lemma}\label{lemma: repre H def}The diffeomorphism $ h(\hat{x})^{-1}\circ h(\omega\cdot \hat{x})$ does not depend on the choice of $\hat{x}\in \hat{M}$. In particular, it induces a well-defined group homomorphism $H:\pi_1(M, x_0)\to \Diff(M)$.
\end{lemma}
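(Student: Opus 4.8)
The plan is to exploit the two sources of invariance that the horizontal foliation $\hat{\mathcal{W}}_H$ enjoys: it is invariant under the lifted extended action $\hat{\tilde\al}$, and — crucially — it is invariant under the deck transformations of the covering $\mathrm{Pr}:\hat M\times N\to M\times N$, since it is the lift of the foliation $\mathcal{W}_H$ downstairs. First I would make precise the deck action on $\hat M\times N$: an element $\omega\in\pi_1(M,x_0)$ acts by $(\hat x,y)\mapsto(\omega\cdot\hat x,y)$, fixing the $N$-coordinate, and this map sends leaves of $\hat{\mathcal{W}}_H$ to leaves of $\hat{\mathcal{W}}_H$ because $\mathrm{Pr}$ intertwines it with the identity on $M\times N$ and $\mathcal{W}_H$ is $\mathrm{Pr}$-saturated. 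Because $\hat{\mathcal{W}}_H$ is a global section (each leaf meets each fiber $N_{\hat x}$ in exactly one point), the holonomy map $h:\hat M\to\Diff^{r+}(N)$ from the fixed fiber $N_{\hat x_0}$ is well defined, and the composition $h(\hat x)^{-1}\circ h(\omega\cdot\hat x)$ is exactly the holonomy of $\hat{\mathcal{W}}_H$ from $N_{\hat x_0}$ to $N_{\omega\cdot\hat x_0}$ read back via the leaf structure; the point is to show this does not move when $\hat x$ varies.

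The key step is the following independence argument. Fix $y\in N$ and let $L$ be the leaf of $\hat{\mathcal{W}}_H$ through $(\hat x_0,y)$; by definition $h(\hat x)\cdot y$ is the $N$-coordinate of $L\cap N_{\hat x}$. Apply the deck transformation $\omega$: the image $\omega\cdot L$ is again a leaf of $\hat{\mathcal{W}}_H$, and it passes through $(\omega\cdot\hat x_0,\, y)$ as well as through $(\omega\cdot\hat x,\, h(\hat x)\cdot y)$. Hence $h(\omega\cdot\hat x)\cdot\big(h(\omega\cdot\hat x_0)^{-1}\cdot\text{(something)}\big)$ — more cleanly: the leaf of $\hat{\mathcal{W}}_H$ through $(\hat x_0,\, z)$ with $z:=h(\omega\cdot\hat x_0)\cdot y$ is precisely $\omega\cdot L$ (two global-section leaves agreeing at one point on the fiber $N_{\omega\cdot\hat x_0}$ must coincide, and both contain $(\omega\cdot\hat x_0,z)$ provided $z$ is chosen this way). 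Therefore $h(\omega\cdot\hat x)\cdot y = h(\hat x)\cdot\big(h(\hat x_0)^{-1}h(\omega\cdot\hat x_0)\cdot y\big)$ — wait, to keep it correct I would instead argue directly that both $(\hat x,\,h(\hat x)y)$ and its $\omega$-translate $(\omega\cdot\hat x,\,h(\hat x)y)$ lie on leaves $L$ and $\omega L$ respectively, so $h(\omega\cdot\hat x)$ applied to the fiber-value at $\hat x_0$ of $\omega L$ returns $h(\hat x)\cdot y$; since the fiber-value of $\omega L$ at $N_{\omega\cdot\hat x_0}$ equals $h(\omega\cdot\hat x_0)\cdot y$ after relabelling, one gets $h(\omega\cdot\hat x)^{-1}\circ h(\hat x) = h(\omega\cdot\hat x_0)^{-1}\circ h(\hat x_0)$ as maps on $N$, independent of $\hat x$. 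Taking inverses gives that $g(\omega):=h(\hat x)^{-1}\circ h(\omega\cdot\hat x)$ is independent of $\hat x$.

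It then remains to check the homomorphism property. Given $\omega_1,\omega_2\in\pi_1(M,x_0)$, write $g(\omega_1\omega_2)=h(\hat x)^{-1}h(\omega_1\omega_2\cdot\hat x)$; insert $h(\omega_2\cdot\hat x)^{-1}h(\omega_2\cdot\hat x)$ in the middle to get $\big(h(\hat x)^{-1}h(\omega_2\cdot\hat x)\big)\big(h(\omega_2\cdot\hat x)^{-1}h(\omega_1\cdot(\omega_2\cdot\hat x))\big)$, where the convention for composing deck transformations must be chosen so that the second factor is $g(\omega_1)$ evaluated at the base point $\omega_2\cdot\hat x$ — which by the just-proven independence equals $g(\omega_1)$ — and the first factor is $g(\omega_2)$. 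Hence $g(\omega_1\omega_2)=g(\omega_2)g(\omega_1)$ or $g(\omega_1)g(\omega_2)$ depending on the left/right convention for the deck action; either way $g$ (or $g^{-1}\circ\,\cdot$) is a genuine homomorphism $H:\pi_1(M,x_0)\to\Diff(N)$. I would also note regularity: each $H(\omega)$ lies in $\Diff^{r+}(N)$ since $h$ takes values there, but as the statement only claims $H:\pi_1\to\Diff(N)$ this is a bonus.

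The main obstacle I anticipate is purely bookkeeping rather than conceptual: getting the order of composition right, i.e. being careful that deck transformations act on the \emph{source} while the extended action acts compatibly, and that $h$ is defined as holonomy \emph{from} the fixed fiber $N_{\hat x_0}$, so that the ``well-definedness'' identity comes out with the factors in the order needed to make $H$ a homomorphism and not an anti-homomorphism. The one genuine input that makes the argument work — and which must be invoked explicitly — is that $\hat{\mathcal{W}}_H$ is a \emph{global section} of the bundle $\hat M\times N\to\hat M$ (Proposition \ref{prop: main prop} together with the suspension/foliation theory cited from \cite{HH}, \cite{NTnonabelian1}), so that two leaves meeting a single fiber at the same point are identical; every ``the leaf through \dots is unique'' step rests on this.
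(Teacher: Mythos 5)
Your proposal is correct in substance and captures the single essential input — deck-invariance of $\hat{\mathcal{W}}_H$ together with the global-section property — but it takes a somewhat different route from the paper. The paper reduces the claim to showing $h(\omega\cdot\hat y)h(\omega\cdot\hat x)^{-1}=h(\hat y)h(\hat x)^{-1}$ for \emph{nearby} $\hat x,\hat y$, then observes that both sides are the local holonomy of $\hat{\mathcal{W}}_H$ between two nearby fibers, and that these holonomies coincide because the foliation near $\hat x$ and near $\omega\cdot\hat x$ are two identical lifted copies of $\mathcal{W}_H$ near $p(\hat x)$; the general case follows by connectedness of $\hat M$. You instead track a single global leaf $L$ through $(\hat x_0,y)$ and its $\omega$-translate $\omega L$, and read off the identity $h(\omega\cdot\hat x)\,h(\omega\cdot\hat x_0)^{-1}=h(\hat x)$ directly from the global-section property (two leaves that share a fiber point coincide). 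This avoids the local-to-global step entirely and, as a bonus, explicitly identifies the constant value as $h(\omega\cdot\hat x_0)$, which the paper leaves implicit.

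Two exposition points worth tightening, though neither is a real gap. First, your middle paragraph still carries the self-correction (``— wait, to keep it correct...'') and the clause ``the fiber-value of $\omega L$ at $N_{\omega\cdot\hat x_0}$ equals $h(\omega\cdot\hat x_0)\cdot y$ after relabelling'' is confusing, because with $y$ fixed as the fiber value of $L$ at $N_{\hat x_0}$, the fiber value of $\omega L$ at $N_{\omega\cdot\hat x_0}$ is just $y$ itself. The clean chain is: let $z$ be the fiber value of $\omega L$ at $N_{\hat x_0}$; then $\omega L$ is the leaf through $(\hat x_0,z)$, so $h(\omega\cdot\hat x_0)z=y$ and $h(\omega\cdot\hat x)z=h(\hat x)y$, which gives $h(\hat x)^{-1}h(\omega\cdot\hat x)=h(\omega\cdot\hat x_0)$ for all $\hat x$, uniformly. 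Second, the uncertainty about homomorphism versus anti-homomorphism is a genuine convention issue (left versus right deck action) but is immaterial for the use made of $H$ downstream — what matters for the paper is only the image subgroup and its kernel; still, it would be cleaner to fix the convention once and state which you get.
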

\begin{proof}
We only need to prove that for any $\omega\in \pi_1(M,x_0)$, for any  $\hat{x},\hat{y}\in \hat{M}$ which are close enough, $$h(\omega\cdot \hat{y})h(\omega\cdot\hat{x})^{-1}=h(\hat{y})h(\hat{x})^{-1}$$

By definition of $h$ we know $h(\omega\cdot \hat{y})h(\omega\cdot\hat{x})^{-1}$ is the holonomy map along $\hat{\mathcal{W}}_H$ between $N_{\omega\cdot \hat{x}}$ and $N_{\omega\cdot \hat{y}}$ and $h(\hat{y})h(\hat{x})^{-1}$ is the holonomy map along $\hat{\mathcal{W}}_H$ between $N_{\hat{x}}$ and $N_{\hat{y}}$. Notice that $\hat{\mathcal{W}}_H$ has exactly the same geometry around $\hat{x}$ and $\omega\cdot \hat{x}$ (locally they are two identical copies of $\mathcal{W}_H$ near $x$). Therefore we have $h(\omega\cdot \hat{y})h(\omega\cdot\hat{x})^{-1}=h(\hat{y})h(\hat{x})^{-1}$.\end{proof}

Consider the homomorphism $H$ defined in Lemma \ref{lemma: repre H def}, we have 
\begin{lemma}\label{lemma: finite image H}
\begin{enumerate}
\item For any $a\in \ZZ^k, \omega\in \pi_1(M, x_0)$, $H(\al(a)_\ast \omega)=H(\omega)$. Therefore $H$ induces a group homomorphism $$\bar{H}: \pi_1(M, x_0)/\mathrm{span}\{\al(a)_\ast\omega\cdot \omega^{-1}| a\in \ZZ^k, \omega\in \pi_1(M,x_0)\}\to \Diff(N)$$
\item $\mathrm{span}\{\al(a)_\ast\omega\cdot \omega^{-1}| a\in \ZZ^k, \omega\in \pi_1(M,x_0)\}$ is  \textbf{finite} index subgroup of $\pi_1(M, x_0)$.
\end{enumerate}
\end{lemma}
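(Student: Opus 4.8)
For part (1), the plan is to use $\hat{\tilde\alpha}$-invariance of $\hat{\mathcal{W}}_H$ to relate the holonomy near $\hat x$ to the holonomy near $\hat\alpha(a)\cdot\hat x$. Concretely, fix $a\in\ZZ^k$ and $\omega\in\pi_1(M,x_0)$. The deck transformation associated to $\alpha(a)_*\omega$ and the deck transformation associated to $\omega$ are intertwined by $\hat\alpha(a)$, since $\hat\alpha(a)$ descends to $\alpha(a)$ on $M$ and $\alpha(a)_*$ is precisely the induced action on $\pi_1$. Because $\hat{\mathcal{W}}_H$ is $\hat{\tilde\alpha}$-invariant, $\hat{\tilde\alpha}(a)$ carries the local picture of $\hat{\mathcal{W}}_H$ near $\hat x$ (and near $\omega\cdot\hat x$) to the local picture near $\hat\alpha(a)\cdot\hat x$ (and near $(\alpha(a)_*\omega)\cdot\hat\alpha(a)\cdot\hat x$), conjugating the relevant holonomy maps by $\hat\beta(a,\cdot)$. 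Evaluating $H$ via Lemma \ref{lemma: repre H def} (which lets us compute it at any convenient point), the conjugating factors cancel and we obtain $H(\alpha(a)_*\omega)=H(\omega)$. Hence $H$ kills every commutator-type element $\alpha(a)_*\omega\cdot\omega^{-1}$, so it factors through the quotient by the subgroup they generate, giving $\bar H$.

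For part (2), which I expect to be the main obstacle, the plan is to pass to the linearization and argue on the level of $H_1$. Recall from Section \ref{section: rig tns ano} that $\alpha$ is (conjugate to) an action by affine automorphisms of an infranilmanifold $M=G/\Gamma$ (or a finite cover thereof). After replacing $M$ by the finite cover on which the action lifts to affine nilmanifold automorphisms — this is where the ``finite cover depending only on $\alpha$'' enters — we may assume each $\alpha(a)$ is an affine automorphism, so it acts on $\pi_1(M)=\Gamma$ and hence on the abelianization $\Gamma^{\mathrm{ab}}$, which is a finitely generated abelian group; modulo torsion it is a lattice $L\cong\ZZ^m$ in $\RR^m=H_1(M;\RR)$, and the induced $\ZZ^k$-action is by the linear parts of the automorphisms. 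The subgroup $\mathrm{span}\{\alpha(a)_*\omega\cdot\omega^{-1}\}$ has finite index in $\pi_1(M,x_0)$ if and only if its image in $\Gamma^{\mathrm{ab}}$ has finite index, which (since everything is finitely generated) holds if and only if the image in $H_1(M;\RR)$ spans the whole space, i.e. if and only if $\sum_{a}(\rho_*(a)-\mathrm{Id})$ has full-rank image on $H_1(M;\RR)$, where $\rho_*(a)\in\GL(m,\ZZ)$ is the linear part acting on homology. Equivalently, the $\ZZ^k$-module $H_1(M;\RR)$ has no nonzero quotient on which the action is trivial; a trivial quotient would correspond to a common eigenvector with eigenvalue $1$ for all $\rho_*(a)$. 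But the Lyapunov functionals of the action on $M$ are exactly the characters of this module's (complexified) weight decomposition, and a common eigenvalue $1$ would produce a zero Lyapunov functional — impossible, since every regular element is Anosov (hyperbolic), so $1$ is never an eigenvalue of $\rho_*(a)$ for $a$ regular. This forces the invariant-vector space to be $0$, hence the span is finite index.

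The remaining subtlety, which I flag as the real work, is bookkeeping between $\pi_1(M)=\Gamma$ (possibly nonabelian nilpotent, and for infranilmanifolds only virtually so) and its abelianization: one must check that finiteness of the index downstairs in $\Gamma^{\mathrm{ab}}$ (mod torsion) lifts back to finiteness of the index in $\Gamma$, which uses that $\Gamma$ is finitely generated nilpotent-by-finite so that a finite-index subgroup of the abelianization pulls back to a finite-index subgroup, together with the fact that the commutator subgroup $[\Gamma,\Gamma]$ is itself finitely generated and already contained in (a finite extension of) the span once we have killed the abelianized obstruction — here again we may first pass to the canonical finite cover so that $\Gamma$ is genuinely a lattice in a nilpotent Lie group. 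I would present this as: reduce to the abelian/linear picture, invoke hyperbolicity of a regular element to rule out eigenvalue $1$, conclude full rank, then a short nilpotent-group argument to transfer back. The dependence of the finite cover on $\alpha$ alone is transparent since all the covers invoked (the one making the action affine, and the one in Proposition \ref{prop: former 3.2}) are determined by the linearization $\rho$.
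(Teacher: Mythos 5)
Your overall strategy is right for both parts, but there is one genuine gap in (1), and (2) takes a different route from the paper's which is worth comparing.

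For part (1), the cancellation you describe is not automatic: invariance of $\hat{\mathcal W}_H$ under $\hat{\tilde\al}(a)$ together with base-point independence of $H$ only gives, in general, $H(\al(a)_\ast\omega)=\beta_0(a)\,H(\omega)\,\beta_0(a)^{-1}$, where $\beta_0$ is the constant cocycle produced in Section \ref{subsection: proof of 1.main}. (Concretely, using $h(\hat\al(a)\hat z)=\hat\beta(a,\hat z)\,h(\hat z)\,\beta_0(a)^{-1}$ and evaluating $H(\al(a)_\ast\omega)$ at the point $\hat\al(a)\hat x$, the factors $\hat\beta(a,\hat x)^{-1}\hat\beta(a,\omega\hat x)$ do cancel because $p(\omega\hat x)=p(\hat x)$, but the outer $\beta_0(a)$'s remain.) To turn this conjugacy into an equality you must use the standing hypothesis of Proposition \ref{prop: former 3.2} that $\beta$ is trivial at the fixed point $x_0$, together with the choice of lift $\hat\al(a)\hat{x_0}=\hat{x_0}$, so that $\beta_0(a)=h(\hat\al(a)\hat{x_0})^{-1}\hat\beta(a,\hat{x_0})=\beta(a,x_0)=\mathrm{id}$. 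The paper's computation makes this usage of $\beta(a,x_0)=\mathrm{id}$ explicit twice; in your write-up you should flag it, since without it the statement is false.

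For part (2), you and the paper agree on the heart of the matter — a hyperbolic nilmanifold automorphism has no eigenvalue $1$, so the relevant ``$\phi-\mathrm{Id}$'' is invertible over $\QQ$ — but you package the nilpotent bookkeeping differently. The paper passes to a nilmanifold cover, invokes Manning/Mal'cev (Lemma \ref{lemma: MM}) to get the $f_\ast$-invariant upper central series with torsion-free abelian quotients and no root-of-unity eigenvalues on each graded piece, and then inducts level by level to show the image of $Q$ is finite. You instead reduce all at once to the abelianization $\Gamma^{\mathrm{ab}}$ and argue there; this is cleaner conceptually, but it requires the standard (true, but not proved in the paper) fact that in a finitely generated nilpotent group $\Gamma$, a subgroup $K$ has finite index if and only if its image $K\Gamma'/\Gamma'$ has finite index in $\Gamma^{\mathrm{ab}}$. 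If you go this route you should state and cite or prove that fact, rather than gesture at ``a short nilpotent-group argument''; your alternative, noting that an infinite quotient $\Gamma/K$ on which every $\al(a)_\ast$ acts trivially would produce an infinite cyclic quotient and hence the forbidden eigenvalue $1$ on $H_1(M;\QQ)$, is also a clean way to close this. Both approaches buy the same conclusion; the paper's is more self-contained via Manning's lemma, yours is shorter once the abelianization criterion is granted.
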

\begin{proof}
(1): By definition of $\hat{\beta}$ and $\hat{\tilde\al}$ we have for any $\hat{x}\in \hat{M}, a\in \ZZ^k, y\in N$
\begin{equation}\label{eqn: appdx proof 6.2}
\hat\beta(a, \hat{x})\cdot y=\pi_N(\hat{\tilde\al}(a)(\hat{x},y))
\end{equation}
Then we have for any $y\in N, a\in \ZZ^k, \omega\in \pi_1(M,x_0)$,
\begin{eqnarray*}
H(\omega)\cdot y&=& h(\omega\cdot \hat{x_0})\cdot y ~~(\text{since $ h(\hat{x_0})=id$})\\
&=&\pi_N(\hat{\mathcal{W}}_H(x_0,y)\cap N_{\omega\cdot \hat{x_0}})\\
&=&\hat{\beta}(a, \omega\cdot \hat{x_0})\cdot \pi_N(\hat{\mathcal{W}}_H(\hat{x_0},y)\cap N_{\omega\cdot \hat{x_0}})~~(\text{since $\beta(a, x_0)=id$})\\
&=&\pi_N(\hat{\tilde\al}(a)(\omega\cdot \hat{x_0},~~ \pi_N(\hat{\mathcal{W}}_H(\hat{x_0},y)\cap N_{\omega\cdot \hat{x_0}}) ))~~(\text{by \eqref{eqn: appdx proof 6.2}})\\
&=&\pi_N(\hat{\tilde\al}(a)(\hat{\mathcal{W}}_H(\hat{x_0},y)\cap N_{\omega\cdot \hat{x_0}}))\\
&&(\text{ since the first coordinate of }\hat{\mathcal{W}}_H(x_0,y)\cap N_{\omega\cdot \hat{x_0}})~~\text{is }\omega\cdot \hat{x_0})\\
&=&\pi_N(\hat{\mathcal{W}}_H(\hat{x_0},y)\cap N_{(\al(a)_\ast\omega)\cdot \hat{x_0}}))~~(\text{since }\beta(a, x_0)=id)\\
&=&H(\al(a)_\ast \omega)\cdot y
\end{eqnarray*}
Therefore $H$ induces a group homomorphism $$\bar{H}: \pi_1(M, x_0)/\mathrm{span}\{\al(a)_\ast\omega\cdot \omega^{-1}| a\in \ZZ^k, \omega\in \pi_1(M,x_0)\}\to \Diff(N)$$\\
(2): (See also \cite{NTnonabelian1}) Since $M$ is an infranilmanifold, $\pi_1(M, x_0)$ is the extension of a nilpotent group $\Lambda$ by a finite group $F$, where $\Lambda$ is a discrete subgroup in a connected, simply connected nilpotent Lie group $\mathcal{N}$. Therefore it is easy to see that we only need to prove the case when $M$ is a \textbf{nilmanifold} $\mathcal{N}/\Lambda$. 

Firstly we consider the case $\mathcal{N}$ is Abelian, then $\Lambda\cong\ZZ^l$. We consider an Anosov element $a_0\in \ZZ^k$, then the homomorphism $\al(a_0)_\ast$ induced by $\al(a_0)$ on the fundamental group of $M$ is the
restriction of an automorphism $\bar\al (a_0)$ of $\mathcal{N}$ that preserves $\Lambda$ ($\bar\al$ can be seen as the linear part of $\al$). And $D\bar\al (a_0)$ at the origin has no eigenvalues on the unit circle. By condition we know $\mathrm{span}\{\al(a)_\ast\omega\cdot \omega^{-1}| a\in \ZZ^k, \omega\in \pi_1(M,x_0)\}$ contains $(\bar\al(a_0)-id)\cdot \Lambda$. Since $\bar\al(a_0)-id$ is invertible on $\Lambda\otimes \QQ$, then $\mathrm{span}\{\al(a)_\ast\omega\cdot \omega^{-1}| a\in \ZZ^k, \omega\in \pi_1(M,x_0)\}$ is of finite index in $\Lambda$, therefore we prove the claim when $\mathcal{N}$ is Abelian.

For general $\mathcal{N}$, we need the following facts stated in \cite{Ma} and \cite{Mal}:
\begin{lemma}\label{lemma: MM}Let $f$ be an Anosov diffeomorphism on a compact nilmanifold $M=\mathcal{N}/\Lambda$. Suppose the upper central series of $\Lambda$ is $\{e\}=\Lambda_0\subset \Lambda_1\subset\cdots \subset \Lambda$, then \begin{enumerate}
\item the automorphism $f_\ast$ induced by $f$ on $\Lambda$  preserves $\Lambda_i$.
\item $M$ is expressed as a sequence of extensions by tori whose fundamental groups is \textbf{free} Abelian group $\Lambda_i/\Lambda_{i-1}$.
\item If we denote by $\varphi_i: \Lambda_i/\Lambda_{i-1}\to \Lambda_i/\Lambda_{i-1}$ the automorphism induced by $f_\ast$. Then none of the $\varphi_i$ have a root of unity as an eigenvalue.
\end{enumerate} 
\end{lemma}
As before we take an Anosov element $a_0$. Denote by $K=K(\Lambda, a_0):=\mathrm{span}\{\al(a_0)_\ast\omega\cdot \omega^{-1}|  \omega\in \Lambda_1\}$ and $Q$ the projection $\Lambda\to \Lambda/K$, we only need to prove that $\#\mathrm{Image}(Q)<\infty$. By our arguments above, $Q|_{\Lambda_1}$ has finite image.

Now we consider $\Lambda_2$ and the cosets $\{\omega\Lambda_1, \omega\in \Lambda_2\}$ of $\Lambda_1$ in $\Lambda_2$. Notice that by $Q$'s definition  
\begin{equation}\label{eqn: a action on quot L2L1}
\mathrm{Image}(Q|_{\omega_1\Lambda_1})=\mathrm{Image}(Q|_{\omega_2\Lambda_1})~~ \text{if }\al(a_0)_\ast\omega_1=\omega_2,~~ \omega_1,\omega_2\in \Lambda_2
\end{equation}
We denote by $\al(a_0)|_{2,1}$ the induced action of $\bar{\al}(a_0)$ defined above on $\Lambda_2/\Lambda_1$, since $\Lambda_2/\Lambda_1$ is a free Abelian group, we could define $(\bar\al(a_0)|_{2,1}-id)\cdot (\Lambda_2/\Lambda_1)$ which is a subgroup in $\Lambda_2/\Lambda_1$. Moreover by (3). of Lemma \ref{lemma: MM} we know $\bar\al(a_0)|_{2,1}-id$ is invertible on $(\Lambda_2/\Lambda_1)\otimes \QQ$. Therefore 
\begin{equation}\label{eqn: a action induce fin ind}
(\bar\al(a_0)|_{2,1}-id)\cdot \Lambda_2/\Lambda_1 ~~\text{ has finite index in $\Lambda_2/\Lambda_1$}
\end{equation}
Denote by $(\bar\al(a_0)|_{2,1}-id)\cdot \Lambda_2=
\{ \omega'| \omega'\in \omega \Lambda_1,~~ \omega\Lambda_1\in (\bar\al(a_0)|_{2,1}-id)\cdot \Lambda_2/\Lambda_1  \}$ then by \eqref{eqn: a action on quot L2L1} we have $$\mathrm{Image}(Q|_{(\bar\al(a_0)|_{2,1}-id)\cdot \Lambda_2})=\mathrm{Image}(Q|_{\Lambda_1})$$
Combine with \eqref{eqn: a action induce fin ind} since $Q|_{\Lambda_1}$ has finite image we know $Q|_{\Lambda_2}$ has finite image as well. Repeat the arguments above by induction  we can prove $Q|_{\Lambda}$ has  finite image.
\end{proof}
In particular, we choose $(M^\ast, x_0^\ast)$ as a finite cover of $(M,x_0)$ corresponding to the subgroup $\mathrm{span}\{\al(a)_\ast\omega\cdot \omega^{-1}| a\in \ZZ^k, \omega\in \pi_1(M,x_0)\}$ in $\pi_1(M, x_0)$. Then for any $\omega\in \pi_1(M^\ast, x_0^\ast)\hookrightarrow \pi_1(M, x_0)$ we have $h(\omega\cdot \hat{x})=h(\hat{x}), \hat{x}\in \hat{M}$. Therefore the lift $\mathcal{W}_H^\ast$ of $\mathcal{W}_H$ on $M^\ast$ is a $C^{1+}-$global section of the fiber bundle $M^\ast \times  N$. And $h$ induces a well-defined $C^{1+}$ map: 
\begin{equation}\label{eqn h star def}
h^\ast: {M}^\ast\to \Diff^{1+}(N), ~~ h^\ast (x^\ast)\cdot y:=\pi_N({\mathcal{W}}_H^\ast(x_0^\ast, y)\cap N_{x^\ast})
\end{equation}
Therefore the lifted cocycle $\beta^\ast$ on $M^\ast$ satisfies 
\begin{equation}\label{eqn: cob beta star on M star}
h^\ast({{\al}^\ast}(a)\cdot x^\ast)^{-1}\circ \beta^\ast(a,x^\ast)\circ h^\ast( x^\ast)=\beta^\ast(a,x_0^\ast)=id, ~~a
\in \ZZ^k
\end{equation}where ${{\al}^\ast}$ is the lift of ${\al}$. As a result $\beta^\ast$ is a $C^{1+}-$coboundary on $M^\ast$.
\subsubsection{Dependence on parameters of the solutions of cohomology equations}\label{subsection: improve regl}
We claim that the lifted cocycle $\beta^\ast$ on $M^\ast$ we got in section \ref{subsection: M ast def} is actually a $C^{[s]-}-$coboundary. The main idea is to use Proposition \ref{prop: DLLAW} below which is a special case of \cite{LW}.

Before state it, we define a new regularity class of maps from $M$ to $\Diff^n(N), n\in \ZZ^+$:  \textbf{H\"older} maps from $M$ to $\Diff^n(N)$. Our definition here is similar to that in \cite{LW}. Suppose $U\subset \RR^n$ is an open set. A map $h:M\to C^n(U), n\in \ZZ^+$ is called H\"older if there exists $\epsilon>0$ such that for any $y\in U$, any $1\leq i\leq n$, $D_y^ih(x)$ depends on $x$ uniformly H\"older continuously with exponent $\epsilon$ and the H\"older constant does not depend on $y$. By taking finite bounded charts for $N$, we can easily define H\"olderness of a map $h: M\to \Diff^n(N)$. 

Notice that under this definition of H\"olderness a $C^n-$map (in the sense of section \ref{subsection regl}) $h: M \to \Diff^n(N)$ may \textbf{not} be H\"older since the $n-$th derivative of $h(x)\in \Diff^n(N)$ may not  depend on $x$ H\"older continuously. But a $C^s (s\notin \ZZ, s>1)$ map from $M$ to $\Diff^s(N)$ is a H\"older map from $M$ to $\Diff^{[s]}(N)$.
\begin{prop}\label{prop: DLLAW}Let $M,N$ be smooth compact manifolds and $f$ be a smooth transitive Anosov diffeomorphism on $M$. If there are H\"older maps $\eta: M\to \Diff^n(N), n\in \ZZ^+,  \varphi\to \Diff^1(N)$ such that $\eta(x)=\varphi(f(x))\circ\varphi(x)^{-1}$. Then in fact $\varphi$ is a H\"older map from $ M$ to $\Diff^n(N)$.
\end{prop}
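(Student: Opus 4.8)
\textbf{Proof proposal for Proposition \ref{prop: DLLAW}.}

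The plan is to reduce the statement to the bootstrap argument of Niţică--Török \cite{NT98} and de la Llave--Windsor \cite{LW} by fixing a chart on $N$ and differentiating the cohomology equation along the $N$--directions. First I would observe that, since $f$ is transitive Anosov and $\eta,\varphi$ are H\"older, a standard Livsic--type consideration (using that $\varphi$ is already $C^1$ in the $N$--variable, i.e. $n\ge 1$ is already available, and H\"older in the $M$--variable) shows that $\varphi$ is uniquely determined by $\eta$ up to the natural ambiguity; so it suffices to improve its fiberwise regularity from $C^1$ to $C^n$ while retaining H\"older dependence on $x\in M$. The key step is to set up, for each multi-index $|\gamma|=j$ with $1\le j\le n$, a \emph{twisted cohomology equation} for the object $D_y^\gamma\varphi(x)$. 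Concretely, in local coordinates $\varphi(x)$ is a map $U\to\RR^{\dim N}$; writing $\psi_j(x):=D_y^j\varphi(x)$ (the $j$--th total fiber derivative, viewed as a section of a vector bundle over $M$ whose fiber is a space of $j$--multilinear maps), the chain rule applied to $\eta(x)=\varphi(f(x))\circ\varphi(x)^{-1}$ yields an equation of the schematic form
\begin{equation*}
\psi_j(f(x)) = A_x\bigl(\psi_j(x)\bigr) + R_j(x),
\end{equation*}
where $A_x$ is a linear operator built from $D_y\varphi$ and $D_y\eta$ (hence H\"older in $x$ and, crucially, invertible with controlled norms because $\varphi(x)\in\Diff^1(N)$), and $R_j(x)$ is a remainder that is a universal polynomial expression in $\eta$ and in the \emph{lower-order} derivatives $\psi_1,\dots,\psi_{j-1}$ together with derivatives of $\eta$ up to order $j$. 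Since $\eta$ is H\"older into $\Diff^n(N)$, the term $R_j$ is H\"older into the relevant bundle once $\psi_1,\dots,\psi_{j-1}$ are known to be H\"older.

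I would then run an induction on $j$. The base case $j=1$ is exactly the hypothesis that $\varphi$ maps into $\Diff^1(N)$, H\"older in $x$. For the inductive step, assuming $\psi_1,\dots,\psi_{j-1}$ are H\"older sections, the displayed equation is a linear twisted cocycle equation over the transitive Anosov map $f$ with H\"older data and a H\"older-invertible transfer operator $A_x$; the Livsic-type regularity result for such bundle-valued equations (this is the content extracted in \cite{NT98}, \cite{LW}, and is also the mechanism behind Kalinin's matrix Livsic theorem \cite{Kalinin}) gives that the unique continuous solution $\psi_j$ is in fact H\"older in $x$, with a H\"older exponent that may degrade at each stage but stays positive after finitely many steps. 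Because $\psi_j$ is genuinely the $j$--th fiber derivative of the already-known $\varphi$ (not merely a formal solution), no matching/consistency issue arises: uniqueness of the continuous solution forces $\psi_j=D_y^j\varphi$. Iterating up to $j=n$ gives that $D_y^j\varphi(x)$ is H\"older in $x$ for every $1\le j\le n$, uniformly in $y$, which is exactly the assertion that $\varphi:M\to\Diff^n(N)$ is a H\"older map.

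The main obstacle I anticipate is purely bookkeeping rather than conceptual: writing the chain-rule expansion of $D_y^j(\varphi(f(x))\circ\varphi(x)^{-1})$ in a way that cleanly isolates the top-order term $\psi_j(f(x))$ linearly, correctly identifies the transfer operator $A_x$ and verifies its uniform invertibility (this uses that $\|D_y\varphi(x)^{-1}\|$ is bounded, i.e. $\varphi(x)$ ranges in a bounded subset of $\Diff^1(N)$, which follows from compactness of $N$ and continuity), and confirms that the remainder $R_j$ really only involves lower-order fiber derivatives. Once that algebra is organized, the analytic input is entirely off-the-shelf: apply the H\"older Livsic theorem for linear cocycles over transitive Anosov diffeomorphisms, which is precisely the special case of \cite{LW} invoked in the statement. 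I would therefore present the proof as: (i) set up the hierarchy of twisted equations via the chain rule; (ii) check invertibility/H\"older dependence of the coefficient operators; (iii) apply the linear H\"older Livsic regularity result inductively; (iv) conclude by uniqueness that the solutions are the genuine fiber derivatives of $\varphi$.
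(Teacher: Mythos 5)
The paper does not prove Proposition~\ref{prop: DLLAW}: it is quoted as a special case of \cite{LW} and invoked as a black box in Section~\ref{subsection: improve regl}, so there is no in-paper argument to compare against, and I assess the proposal on its own.

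Setting up the chain-rule hierarchy of twisted cohomology equations is a reasonable starting point, and your observation that $R_j$ involves only $\eta$ and the lower-order $\psi_1,\dots,\psi_{j-1}$ is correct. But the inductive step has a genuine gap. The hypothesis gives $\varphi$ only as a Hölder map into $\Diff^1(N)$, so for $j\ge 2$ the object $\psi_j:=D^j_y\varphi$ is not known to exist --- its existence is part of what is being asserted. Your justification that ``uniqueness of the continuous solution forces $\psi_j=D^j_y\varphi$'' presupposes that $D^j_y\varphi$ exists and solves the equation, which is exactly what needs to be proved. Before any Livsic-type Hölder regularity can be applied, one must (a) produce a continuous solution $\psi_j$ of the non-homogeneous twisted equation $\psi_j(f(x))=A_x\psi_j(x)+R_j(x)$, and (b) show $\varphi$ is in fact $j$-times fiber-differentiable with $D^j_y\varphi=\psi_j$. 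Step (a) is not automatic: the coefficient $A_x=D_{\varphi(x)(y)}\eta(x)$ carries no contraction or bunching property, and while its coboundary structure $A_x = D_y\varphi(f(x))\circ\bigl(D_y\varphi(x)\bigr)^{-1}$ lets one conjugate to a classical untwisted equation for $\hat\psi_j:=\bigl(D_y\varphi(x)\bigr)^{-1}\psi_j(x)$, that untwisted equation has a continuous solution iff its periodic obstruction vanishes, and verifying that obstruction is essentially equivalent to the claim itself; moreover, even when solvable, the untwisted equation determines $\hat\psi_j$ only up to an additive constant (a whole $y$-dependent family of them, in fact), so ``the unique continuous solution'' is not well defined without a normalization. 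Step (b) --- integrating the candidate derivative field back to $\varphi$, i.e.\ showing the formal solution is a genuine higher fiber derivative --- is the analytic heart of de la Llave--Windsor's theorem, and your plan files it under ``bookkeeping.'' These are not presentational issues; without arguments for (a) and (b) the induction does not close, so the proposal as written does not constitute a proof.
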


We come back to the proof of Proposition \ref{prop: former 3.2}. Take an arbitrary regular element $a\in \ZZ^k$, then the lift $\al^\ast(a)$ of $\al(a)$ on $M^\ast$ is a smooth transitive Anosov diffeomorphism. Since $s\notin \ZZ$, then $\beta^\ast(a, \cdot)$ is actually a H\"older map from $M^\ast$ to $\Diff^{[s]}(N)$. Similarly since $h^\ast: M^\ast\to \Diff^{1+}(N)$ is a ${C}^{1+}$ map, then $h^{\ast}$ is also a H\"older map from $M^\ast\to \Diff^1(N)$. We apply Proposition \ref{prop: DLLAW} with $M=M^\ast, N=N, f=\al^\ast(a), \eta=\beta^\ast(a, \cdot), \varphi=h^\ast, n=[s]$, by \eqref{eqn: cob beta star on M star} we have that $h^\ast$ is a H\"older map from $M$ to $\Diff^{[s]}(N)$.

As a result, by $h^\ast$'s definition in \eqref{eqn h star def} we have that the local holonomy map along $\mathcal{W}^\ast_H$ between two $N$ leaves in $M^\ast\times N$ is uniformly $C^{[s]}$. By (3). of Proposition \ref{prop: main prop}, $\mathcal{W}^\ast_H$ has uniformly $C^{s-}-$leaves, therefore by Lemma \ref{lemma:holo reg implies foliation reg}, $\mathcal{W}^\ast_H$ is a $C^{[s]-}-$foliation of $M^\ast \times N$, hence $h^\ast$ is a $C^{[s]-}-$map from $M^\ast$ to $\Diff^s(N)$. Then $\beta^\ast$ is a $C^{[s]-}-$coboundary.
\subsubsection{Proof of (2). of Theorem \ref{thm: main} by Proposition \ref{prop: former 3.2}}\label{subsection: 3.2 from 6.1}
Now we prove (2). of Theorem \ref{thm: main} without assuming the existence of $\al-$fixed point. Recall that $\beta$ is a center-bunched $C^s-$cocycle ($s\notin \ZZ, s>2$) over $\al$ where $\al$ and $M$ are defined as in the beginning of this chapter. Then (1). of Theorem \ref{thm: main} can be applied to $\beta$. 

By Remark \ref{rema: comm fix pnt} and footnote therein we can find a free Abelian subgroup $A$ of $\ZZ^k$  such that $\al|_A$ has a fixed point. Therefore $\al|_A$ satisfies all our assumptions for $\al$ in Theorem \ref{thm: main}. We apply (1). of Theorem \ref{thm: main} for the case $s=s, r=1$ to $\beta$, where we choose the base point $x_0$ of $M$ in section \ref{subsection: proof of 1.main} to be the fixed point of $\al|_A$. Similar to section \ref{subsection: M ast def} we choose the finite cover $(M^\ast, x^\ast_0)$ of $(M,x_0)$ to be the cover corresponding to the subgroup $\mathrm{span}\{\al(a)_\ast\omega\cdot \omega^{-1}| a\in A, \omega\in \pi_1(M,x_0)\}$ in $\pi_1(M, x_0)$. Notice that in fact in Lemma \ref{lemma: finite image H} we proved that for any regular element $a_0$, the group $\mathrm{span}\{\al(a_0)_\ast\omega\cdot \omega^{-1}, ~~\omega\in \pi_1(M,x_0)\}$ is  a finite index subgroup of $\pi_1(M,x_0)$. Therefore $M^\ast$ is a finite cover of $M$.

Suppose now $\beta$ lifts to a $C^{[s]-}-$coboundary $\beta^\ast$ on $M^\ast$, i.e. there exists a $C^{[s]-}-$map $h^\ast$ from $M^\ast$ to $\Diff^{[s]-}(N)$ such that for any $a\in \ZZ^k,x^\ast\in M^\ast$, 
\begin{equation}\label{eqn: lift cobound}
h^\ast(\al^\ast(a)\cdot x^\ast)^{-1}\beta^\ast(a,x^\ast)h^\ast(x^\ast)=id
\end{equation}
 where $\al^\ast, \beta^\ast$ are the lifts of $\al$ and $\beta$ on $M^\ast$ respectively. We can easily find a set $S$ of generators of $\ZZ^k$ such that all elements are regular. Then for any $a\in S$, $\al^\ast(a)$ has a fixed point $x^\ast_a$ on $M^\ast$ (cf. Remark \ref{rema: ano dif has fix pnt} and reference therein). Apply equation \eqref{eqn: lift cobound} with $a=a, x^\ast=x^\ast_a$ we know $\beta^\ast(a,x^\ast_a)=id$. Therefore $\beta$ is fixed point trivial in the sense of section \ref{subsedction: def ess coho}.
 
Conversely suppose $\beta$ is fixed point trivial, i.e. there is a set $S$ of generators for $\ZZ^k$ such that for any $a\in S$, there is an $\al(a)-$fixed point $x_a$ which satisfies $\beta(a,x_a)=id$. Notice that by (1). of Theorem \ref{thm: main} we know there is a $C^{1+}$ map $h:\hat{M}\to \Diff^{1+}(N)$ such that 
\begin{equation}\label{eqn: repeat thm3.1}
h(\hat\al(a)\cdot\hat{x})\circ \hat{\beta}(a,\hat{x})\circ h(\hat{x})^{-1}=\beta_0(a)
\end{equation}
where $\beta_0:\ZZ^k\to \Diff^{1+}(N)$ is a constant cocycle (hence a group homomorphism). Apply \eqref{eqn: repeat thm3.1} to the case $a\in S$ and $\hat{x}:=\hat{x_a}$ (the lift of $\hat{x_a}$ on $\hat{M}$) we know for any $a\in S$, $\beta_0(a)=id$. Therefore $\beta_0$ is trivial and $\beta(a,x_0)=id $ for any $a\in A$. Then by our choice of $M^\ast$ we know for any $\omega\in \pi_1(M^\ast,\hat{x_0})\hookrightarrow \pi_1(M,x_0)$ we have $h(\omega\cdot \hat{x})=h(\hat{x})$. As the end of section \ref{subsection: M ast def} we know $h$ induces a well-defined map $h^\ast$ on $M^\ast$. Then $\beta$ lifts to a $C^{1+}-$coboundary $\beta^\ast$ on $M^\ast$. By discussion in section \ref{subsection: improve regl}, $\beta^\ast$ is in fact a $C^{[s]-}-$coboundary.

\subsection{Proof of Theorems \ref{thm: cond A} and \ref{thm: Cartan}}Let $s=\infty$ we only need the following Lemma.

\begin{lemma}\label{properties} a)  $\al$ is maximal then $\al$ is full.

b)   If $\al$ is full, then  $\al$ is TNS and resonance free, with respect to any invariant ergodic measure.
\end{lemma}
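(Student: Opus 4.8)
I would treat the two parts separately; part (b) is where the real content sits. For \textbf{(a)}, I would start from the $k+1$ coarse Lyapunov functionals $\chi_1,\dots,\chi_{k+1}$ on $\RR^k$ provided by maximality, noting that general position makes any $k$ of them linearly independent and that the ``at least two distinct Lyapunov hyperplanes'' clause of fullness is immediate since $k+1\ge 2$. The remaining task is, for each $i$, to produce a regular $a\in\ZZ^k$ with $\chi_i(a)>0$ and $\chi_l(a)<0$ for all $l\ne i$, for then $E^u_a=\bigoplus_{\chi_l(a)>0}E_l=E_i$. I would argue: the space of linear relations among the $\chi_l$ is one-dimensional (general position), spanned by some $\sum_l c_l\,\chi_l=0$ with all $c_l\ne0$; since (by the global rigidity theorem \cite{KSp}, in the setting of Theorem \ref{thm: Cartan}) $\al$ is smoothly conjugate to an algebraic --- hence volume preserving --- action on an infranilmanifold, the sum of the Lyapunov exponents of every element vanishes, which, grouped by coarse Lyapunov class, gives a relation $\sum_l m_l\,\chi_l=0$ with all $m_l>0$; as the relation space is one-dimensional, $(c_l)_l$ is a scalar multiple of $(m_l)_l$, so all $c_l$ share a sign, which I normalise to be positive. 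Then $a\mapsto(\chi_1(a),\dots,\chi_{k+1}(a))$ is a linear isomorphism of $\RR^k$ onto $H=\{v\in\RR^{k+1}:\sum_l c_l v_l=0\}$, and an open orthant of $\RR^{k+1}$ meets $H$ precisely when its coordinate signs are not all equal; the orthant ``$+$ in slot $i$, $-$ elsewhere'' is of that kind (as $k+1\ge 2$), hence meets $H$, and clearing denominators yields the desired regular integer vector. So $\al$ is full.

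For the \textbf{TNS} assertion in (b) I would suppose, for a contradiction, that two Lyapunov functionals are negatively proportional; equivalently, two coarse Lyapunov functionals $\chi_\alpha,\chi_\beta$ with $\alpha\ne\beta$ satisfy $\chi_\beta=-c\,\chi_\alpha$, $c>0$, so $\ker\chi_\alpha=\ker\chi_\beta$. If $\al$ has at least three coarse Lyapunov distributions, pick $i\notin\{\alpha,\beta\}$ and use fullness to get a regular $a$ with $E_i=E^u_a$; this forces $\chi_\alpha(a)<0$ and $\chi_\beta(a)<0$, impossible since $\chi_\beta(a)=-c\,\chi_\alpha(a)$. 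If $\al$ has exactly two coarse Lyapunov distributions, they determine the single hyperplane $\ker\chi_\alpha$, contradicting that fullness requires at least two distinct Lyapunov hyperplanes. Hence $\al$ is TNS; being a property of the Weyl-chamber picture, this holds independently of the measure, and for any ergodic $\mu$ the Lyapunov functionals --- each a positive multiple of a coarse one (cf.\ Section 2.2) --- are likewise not negatively proportional.

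For \textbf{resonance-freeness} in (b): fix an ergodic $\mu$ and Lyapunov functionals $\chi_i,\chi_j,\chi_l$ of $\mu$ with $\chi_i$ not positively proportional to $\chi_j$, and suppose $\chi_i-\chi_j=d\,\chi_l$ for some $d\in\RR$, necessarily $d\ne0$. Applying fullness to the coarse Lyapunov distribution containing the Lyapunov space of $\chi_i$ gives a regular $a$ with that distribution equal to $E^u_a$, so $\chi_i(a)>0$, while $\chi(a)<0$ for every $\mu$-Lyapunov functional $\chi$ not positively proportional to $\chi_i$. In particular $\chi_j(a)<0$; and $\chi_l(a)<0$ as well, because if $\chi_l$ were positively proportional to $\chi_i$, then $\chi_j=\chi_i-d\,\chi_l$ would be proportional to $\chi_i$, hence (by the TNS just proved, and $\chi_j\ne0$) positively proportional to $\chi_i$, contrary to hypothesis. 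Evaluating $\chi_i-\chi_j=d\,\chi_l$ at $a$ (positive left side, $\chi_l(a)<0$) forces $d<0$; carrying out the same reasoning with $i$ and $j$ interchanged --- using fullness for the coarse distribution of $\chi_j$ --- yields a regular $b$ with $\chi_j(b)>0$, $\chi_i(b)<0$, $\chi_l(b)<0$, and evaluation at $b$ forces $d>0$. This contradiction proves $\al$ resonance-free with respect to $\mu$.

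The only genuinely non-formal ingredient is the use of the algebraic structure in (a): purely combinatorially, maximality removes from the Weyl-chamber picture exactly the antipodal pair of sign patterns $\pm(\mathrm{sign}(c_1),\dots,\mathrm{sign}(c_{k+1}))$, which need not equal $\pm(+,\dots,+)$, so fullness is \emph{not} a formal consequence of maximality --- it is the volume-preserving nature of $\al$, entering through \cite{KSp}, that pins the $c_l$ to a common sign. Once that is in place, and given TNS, part (b) is elementary sign-chasing.
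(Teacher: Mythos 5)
Your proposal is correct in its conclusions, but the two halves compare to the paper differently.

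\textbf{Part (b).} Your argument is essentially the paper's. The TNS step is identical: negatively proportional $\chi_\alpha, \chi_\beta$ share a Weyl chamber wall, a third coarse class gives a regular $a$ with $\chi_\alpha(a)<0$ and $\chi_\beta(a)<0$, impossible; and if there is no third class there is only one Lyapunov hyperplane, contradicting fullness (the paper phrases this as ``not rank-one''). The resonance-free step is the same sign-chasing with $a$ and $b$ chosen by fullness; you additionally spell out why $\chi_l$ cannot be positively proportional to $\chi_i$ (resp.\ $\chi_j$), which the paper leaves to the reader, so your version is slightly more explicit but not different in substance.

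\textbf{Part (a).} Here you take a genuinely different route. The paper's proof is a pure Weyl-chamber count: $k+1$ hyperplanes through the origin in general position cut $\RR^k$ into exactly $2^{k+1}-2$ chambers, each chamber carries a distinct sign pattern, and the all-positive and all-negative patterns are unrealizable (an Anosov element in such a chamber would have trivial stable, resp.\ unstable, bundle); hence every one of the remaining $2^{k+1}-2$ sign patterns occurs, in particular ``$+$ on $\chi_i$, $-$ on all the rest''. You instead analyze the unique relation $\sum_l c_l\chi_l = 0$ and invoke \cite{KSp} and volume preservation of the algebraic model to force all $c_l$ to share a sign. Your closing remark that, combinatorially, maximality only deletes the antipodal pair $\pm(\mathrm{sign}(c_1),\dots,\mathrm{sign}(c_{k+1}))$, which need not be $\pm(+,\dots,+)$, is a sharp and correct observation: fullness is not a formal corollary of the hyperplane combinatorics. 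But the extra dynamical input needed is much cheaper than you make it. One does not need \cite{KSp} or the algebraicity of $\al$; the mere presence of Anosov elements in each Weyl chamber (which is part of what makes the coarse Lyapunov picture well defined) already forbids the all-positive chamber, and that is precisely what the paper's ``there is no Weyl chamber where all Lyapunov exponents are positive (or all negative)'' uses. Your version therefore proves (a) only under the additional hypotheses of Theorem~\ref{thm: Cartan}, and makes this purely Weyl-chamber statement logically dependent on the global rigidity theorem, which the paper's argument avoids.
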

\begin{proof}
If a $\mathbb Z^k-$action $\al$ is maximal, then it has exactly $k+1$ Lyapunov hyperplanes in general position. This implies that  obviously there must be at least two Lyapunov hyperspaces, and  that there are exactly $2^{k+1}-2$ Weyl chambers. Since there is no Weyl chamber where all Lyapunov exponents are positive (or all negative), it follows that all combinations of signs appear among Weyl chambers, so for any  $i$ there is Weyl chamber in which $\chi_i$ is positive while all other non-positively proportional Lyapunov functionals are negative. This implies the action is full.

To prove part b): suppose $\al$ is not TNS and that there are $i, j$ such that $\chi_i=c\chi_j$ for some $c<0$. Then these two Lyapunov functionals share the same Weyl chamber wall i.e. $\ker \chi_i=\ker \chi_j$. Since $\al$ is assumed to be not rank-one, there is at least one more Lyapunov exponent $\chi_k$ which is not proportional to   $\chi_i$ and $\chi_j$. Since $\al$ is full there exists a regular element $a$ such that $E_k=E^u_a$. This implies $\chi_k(a)>0$, but $\chi_i(a)<0$ and $\chi_j(a)<0$.  The last two inequalities are not possible for any regular element because  $\chi_i$ and $\chi_j$ are negatively proportional. 

Suppose that  $\al$  is not resonance free. Then there are three Lyapunov functionals $\chi_i, \chi_j$ and $\chi_k$ such that $\chi_i-\chi_j = c\chi_k$ for some $c\ne 0$. From assumption (A) there exists regular element $a$  for which  $E_j=E^s_a$. Then $\chi_j(a)<0$, but $\chi_i(a)>0$ and $\chi_k(a)>0$. This implies $c>0$. By the same reasoning, there exists regular element $b$ for which $E_i=E^u_b$. Then $\chi_i(b)>0$, but $\chi_j(b)<0$ and $\chi_k(b)<0$. This implies $c<0$. Therefore we can conclude $c=0$ which contradicts the assumption.\end{proof}

If $\al$ is maximal Cartan action on $M$ with all elements Anosov and at least one element transitive, then the main result in \cite[Corollary 1.4]{KSp}  shows that $\al$ is smoothly conjugate to an action on a infranilmanifold, by affine maps. Therefore, by the lemma above, we have that $\al$ (after a smooth conjugacy) satisfies the conditions of Theorem \ref{thm: main}, so we get the conclusion of Theorem \ref{thm: Cartan}.

Similarily, if $\al$ is full, and on infranilmanifold, then by the lemma above the conditions of Theorem \ref{thm: main} are satisfied, so Theorem \ref{thm: cond A} follows.

\end{document}